\numberwithin{equation}{section}
\newtheorem{theorem}{Theorem}[section]
\newtheorem{proposition}[theorem]{Proposition}
\newtheorem{corollary}[theorem]{Corollary}
\newtheorem{lemma}[theorem]{Lemma}
\newtheorem{remark}[theorem]{Remark}
\newtheorem{definition}[theorem]{Definition}
\newenvironment{proof}[1][Proof]{\noindent\textit{#1.} }{\hfill$\Box$\medskip}
\title{Is every triangle a trajectory of an elliptical billiard?}
\author[1,3]{Vladimir Dragovi\'c}
\author[2,3]{Milena Radnovi\'c}
\affil[1]{\textsc{The University of Texas at Dallas, Department of Mathematical Sciences}}
\affil[2]{\textsc{The University of Sydney, School of Mathematics and Statistics}}
\affil[3]{\textsc{Mathematical Institute SANU, Belgrade}}
\affil[ ]{\texttt{vladimir.dragovic@utdallas.edu, milena.radnovic@sydney.edu.au}}
\date{}
\begin{document}
	
	\maketitle

\begin{abstract} Using Marden's Theorem from geometric theory of polynomials, we show that for every triangle there is a unique ellipse such that the triangle is a billiard trajectory within that ellipse.
Since $3$-periodic trajectories of billiards within ellipses are examples of the Poncelet polygons, our considerations provide a new insight into the relationship between Marden's Theorem and the Poncelet Porism, two gems of exceptional classical beauty.  We also show that every parallelogram is a billiard trajectory within a unique ellipse. We prove a similar result for the self-intersecting polygonal lines consisting of two pairs of congruent sides, named ``Darboux butterflies". In each of three considered cases, we effectively calculate the foci of the boundary ellipses.
\end{abstract}

\emph{Keywords:} {Elliptical billiards; Marden's Theorem; $3$-periodic billiard trajectories; $4$-periodic billiard trajectories; Darboux butterflies; conics}

\emph{AMS subclass:} 51N20, 30C15, 37J35, 70H06

\tableofcontents

\

\

\section{Introduction}\label{sec:intro}
Recall that \emph{mathematical billiard} in a planar domain is a dynamical system where a particle moves without constraints within the domain, and obeys \emph{the billiard reflection law} when it hits the boundary \cites{KozTrBIL, Tab2005book}.
Thus, billiard trajectories are polygonal lines with vertices at the boundary, such that two consecutive sides form congruent angles with the tangent line to the boundary at their joint vertex, see Figure \ref{fig:billiard-reflection}.
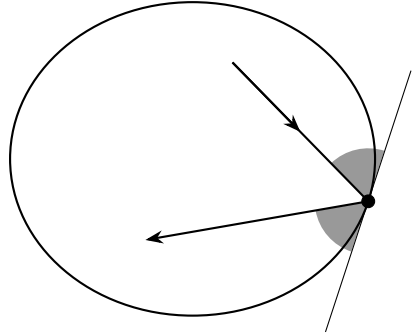
\begin{figure}[h]
	\begin{center}
\begin{tikzpicture}[scale=1.2]
	\tikzmath{\koef=3; \k1=0.7; \k2=1-\k1; \k3=0.35; \k4=1-\k3;}
	\coordinate (A) at (1.92538, -0.468715);
	\coordinate (T2) at (1.92538+\koef*0.468715/3,-0.468715+\koef*1.92538/4);
	\coordinate (T1) at (1.92538-\koef*0.468715/3,-0.468715-\koef*1.92538/4);
	\coordinate (B) at (\k2*1.92538-0.199667*\k1,-0.468715*\k2+\k1* 1.7234);
	\coordinate (C) at (\k2*1.92538-1.56424*\k1,-0.468715*\k2-\k1*1.07929);
	\coordinate (B1) at (\k4*1.92538-0.199667*\k3,-0.468715*\k4+\k3* 1.7234);

	\draw[white] (T2) -- (A) -- (B)
	pic [fill=black!40, angle radius=20pt] {angle = T2--A--B};
	
	\draw[white] (T1) -- (A) --  (C)
	pic [fill=black!40, angle radius=20pt] {angle = C--A--T1};
	
	\draw[thick](B)--(A);
	\draw[thick,-Stealth](B)--(B1);
	
	\draw[thick,-Stealth](A)--(C);
	\draw[thick] (0,0) ellipse (2 and 1.732);
	\draw[black, fill=black] 	(A) circle (2pt);
	
	\draw (T1) -- (T2);
	
\end{tikzpicture}
\caption{Billiard reflection law: the angle of incidence with the tangent line at the bouncing point on the boundary equals the angle of reflection.}\label{fig:billiard-reflection}
\end{center}
\end{figure}
Mathematical billiards are idealized models in many aspects: a usual billiard ball is replaced by a material point, the friction and spin are neglected.
Such models have natural applications, for example in geometric optics.
Here, we assume that the billiard particle is of the unit mass and it moves under the inertia between the impacts, i.e.~uniformly along straight lines.

Billiards within ellipses have been intensively studied, see for example \cites{Bolotin1990,Kozlov2003,  KozTrBIL, DragRadn2014jmd, ADSK2016, KS2018, BiMi2017, BiMi2022, CZ2021, GKR2021, GKR2022, DGR2022, FoVed} and references therein.
Their generalisations to higher-dimensions \cites{DragRadn2011book,St,DR2019cmp,DR2023adv} and various geometric settings
\cites{Glu2021,DR2023} have also been in the focus of research interest.
A common denominator in the wide variety of approaches, methods and questions in all those works is rich interplay with geometry of conics and quadrics, see for example \cites{Tab2022,GSO}.
This work offers a novel glimpse into that interplay.

In this note, we will give the affirmative answer to the title question. This answer may appear surprising at the first glance, if we recall that a conic is defined with various sets of five conditions, while a triangle as an inscribed billiard trajectory imposes six conditions, three points and three tangent lines. Regardless of these seemingly overdetermined conditions, the solution conic exists and it is unique always, and it is always an ellipse.
The main ingredients in our considerations of triangular trajectories  are  Marden's theorem from the geometry of polynomials \cite{MardenPOLY} and the classical Ceva's theorem from elementary geometry.
Periodic trajectories of billiards within ellipses can be seen as an instance of so-called Poncelet polygons, which are closed polygonal lines inscribed in one conic an circumscribed about the other conic, see e.g. \cite{DragRadn2011book}.
The Poncelet Theorem from projective geometry of conics states that if for a  given pair of conics there is one Poncelet polygon, then there are infinitely many such polygons and all have the same number of sides.
Thus, we are in this work dealing with an interaction between Marden's theorem and the Poncelet theorem, which are both recognized by their exceptional classical beauty.
Previously, a strong relationship between these two theorems was observed in \cite{Drag2011} in a different context.

This paper is organised as follows.
We review basic facts about billiards within triangles and the theorems of Ceva, Menelaus, and Simson in Section \ref{sec:triangles}; 
conics and elliptical billiards in Section \ref{sec:conics}; and Marden's theorem in Section \ref{sec:marden}.
In Section \ref{sec:3periodic}, we prove that each triangle is a billiard trajectory within a unique ellipse.
In Section \ref{sec:convex4}, we give a complete characterization of convex $4$-periodic elliptical billiard trajectories as parallelograms and show the converse statement: that each parallelogram is a billiard trajectory within unique ellipse, see Theorem \ref{th:parall}.
Nonconvex $4$-periodic elliptical billiard trajectories are considered in Section \ref{sec:Darboux}, where they are characterized as so-called \emph{Darboux butterflies}.
We also show that every Darboux butterfly is a billiard trajectory within a unique ellipse, see Theorem \ref{th:each-butterfly}. 
In each of the three cases, of a triangle, a parallelogram, and a Darboux butterfly, we effectively calculate the foci of the boundary ellipse.
We provide multiple proofs for our statements, allowing interplay of various classical and modern geometry results.

\section{Triangles}\label{sec:triangles}

In this section consists of two parts: in Subsection \ref{sec:triangular-billiards}, we review main properties of the billiards within a triangle in the Euclidean plane, and in Subsection \ref{sec:ceva} we revisit the Ceva theorem.

\subsection{Triangular billiards}
\label{sec:triangular-billiards}

A trajectory of such a billiard is a polygonal line, finite or infinite, with vertices on the sides of the
triangle, such that consecutive edges of the trajectory satisfy the
billiard reflection law. i.e.~they form the same angle with the side of the
triangle which their common vertex lie on, see Figure \ref{fig:billiard-triangle}.
	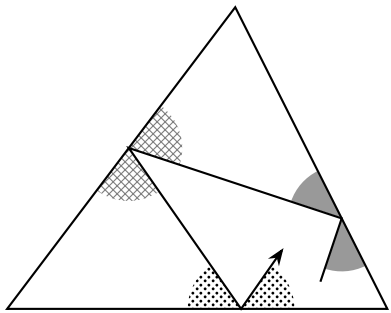
\begin{figure}[h]
	\begin{center}
		\begin{tikzpicture}
			\coordinate (A) at (-3,0);
			\coordinate (B) at (2,0);
			\coordinate (C) at (0,4);
			\coordinate (M1) at (1.12,0.36);
			\coordinate (M2) at (1.4,1.2);
			\coordinate (M3) at (-1.4,2.13333);
			\coordinate (M4) at (0.0769231,0);
			\coordinate (M5) at (0.635236,0.806452);
			
			\draw[white] (M1) -- (M2) -- (B)
			pic [fill=black!40, angle radius=20pt] {angle = M1--M2--B};
			\draw[white] (C) -- (M2) -- (M3)
			pic [fill=black!40, angle radius=20pt] {angle = C--M2--M3};
			
			\draw[white] (C) -- (M3) -- (M2)
			pic [pattern=crosshatch, pattern color=gray, angle radius=20pt] {angle = M2--M3--C};
			\draw[white] (A) -- (M3) -- (M4)
			pic [pattern=crosshatch, pattern color=gray, angle radius=20pt] {angle = A--M3--M4};
			
			\draw[white] (A) -- (M4) -- (M3)
			pic [pattern=crosshatch dots, pattern color=black, angle radius=20pt] {angle = M3--M4--A};
			\draw[white] (B) -- (M4) -- (M5)
			pic [pattern=crosshatch dots, pattern color=black, angle radius=20pt] {angle = B--M4--M5};

			\draw[thick] (A)--(B)--(C)--cycle;
						
			\draw[thick,-Stealth]
			(M1)--(M2)--(M3)--(M4)--(M5);
		\end{tikzpicture}
		\caption{Billiard motion in a triangle.}\label{fig:billiard-triangle}
	\end{center}
\end{figure}

The reflection is not well defined in the vertices of the triangle, thus we omit from our consideration trajectories falling in a vertex.

\begin{remark}\label{rem:min} Note the following minimization property of the billiard reflection: if $X$, $Y$ are two points on the same side of the a given line $\ell$, then the length $XL+LY$, for $L\in\ell$, will be the smallest when the segments $XL$ and $LY$ satisfy the billiard reflection law off $\ell$, see Figure \ref{fig:minimization}.
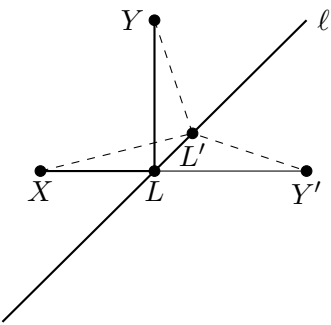
\begin{figure}[h]
	\begin{center}
		\begin{tikzpicture}[scale=1]
			\draw[thick](-2,-2)--(2,2) node[right]{$\ell$};
			
			\draw[thick] (-1.5,0)--(0,0)--(0,2);
			\draw(0,0)--(2,0);
			\draw[dashed](0,2)--(0.5,0.5)--(2,0);
			\draw[dashed](-1.5,0)--(0.5,0.5);
			\draw[black,fill=black](-1.5,0)circle(2pt)node[below]{$X$};
			\draw[black,fill=black](0,2)circle(2pt)node[left]{$Y$};
			\draw[black,fill=black](0,0)circle(2pt)node[below]{$L$};
			\draw[black,fill=black](2,0)circle(2pt)node[below]{$Y'$};
			\draw[black,fill=black](0.5,0.5)circle(2pt)node[below]{$L'$};
		\end{tikzpicture}
		\caption{The minimization property of billiard reflection: the shortest path connecting points $X$ and $Y$ which visits line $\ell$ is the billiard path $XLY$. Its length equals the segment $XY'$, where $Y'$ is symmetric to $Y$ with respect to $\ell$. For any other point $L'\in\ell$, we have $XL'+L'Y=XL'+L'Y'>XY'$.}\label{fig:minimization}
	\end{center}
\end{figure}
\end{remark}

One of the first natural questions for any dynamical system is to establish if its periodic trajectories exist and find them if they do.
Thus, next we recall a proof of the existence a periodic trajectory within any acute triangle.

\begin{theorem}[see e.g.~\cite{DragRadn2011book}*{Theorem 2.2}]\label{th:acute}
	Let $\triangle ABC$ be an acute triangle, and $K$, $L$, $M$ the feet of its altitudes.
	Then $\triangle KLM$ is the triangle with minimal perimeter inscribed in $\triangle ABC$ and it represents a unique $3$-periodic trajectory of the billiard with $\triangle ABC$.
\end{theorem}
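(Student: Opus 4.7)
The plan is to establish the theorem in two steps: \emph{(a)} the orthic triangle $\triangle KLM$ is a billiard trajectory in $\triangle ABC$, and \emph{(b)} it is the unique $3$-periodic trajectory, coinciding with the unique perimeter-minimizing inscribed triangle.

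For step \emph{(a)}, I would verify that the altitudes of $\triangle ABC$ coincide with the internal angle bisectors of $\triangle KLM$; by definition of the billiard reflection, this is precisely the condition that the two sides meeting at each of $K$, $L$, $M$ form congruent angles with the corresponding side of $\triangle ABC$. Let $H$ denote the orthocenter. The quadrilaterals $AMHL$, $BKHM$, $CLHK$ each contain a pair of opposite right angles and are therefore cyclic. Inscribed-angle chasing in these three circles yields $\angle MKA = \angle LKA = 90^\circ - \angle A$, with analogous identities at $L$ and $M$, which is exactly the reflection law. Acuteness of $\triangle ABC$ ensures that $K$, $L$, $M$ lie in the interiors of the sides, so the orthic triangle is properly inscribed.

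For step \emph{(b)}, I would combine the minimization property of Remark \ref{rem:min} with Fagnano's reflection trick. Let $\triangle K'L'M'$ be any triangle inscribed in $\triangle ABC$ with $K'\in BC$, $L'\in CA$, $M'\in AB$, and denote by $L_1'$, $L_2'$ the reflections of $L'$ in the lines $BC$ and $AB$. Since $K'L' = K'L_1'$ and $M'L' = M'L_2'$, the perimeter equals the length of the polygonal path $L_1' K' M' L_2'$, so
\begin{equation*}
K'L' + L'M' + M'K' \;\ge\; |L_1' L_2'|,
\end{equation*}
with equality if and only if the reflection law holds at $K'$ and $M'$. A direct coordinate computation with $A=(0,0)$, $C=(b,0)$, $L'=(t,0)$ gives
\begin{equation*}
|L_1' L_2'|^2 \;=\; 4\bigl[(b\sin C)^2 - 2(b\sin C)(t\sin B)\cos A + (t\sin B)^2\bigr],
\end{equation*}
a strictly convex quadratic in $t$ with unique minimizer $t^\ast = b\sin C\cos A/\sin B$; by the law of sines this is the foot of the altitude from $B$, and the resulting minimum perimeter is $4R\sin A\sin B\sin C$. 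Acuteness guarantees $t^\ast \in (0,b)$. Consequently the perimeter functional on inscribed triangles attains its infimum uniquely at $\triangle KLM$, and by Remark \ref{rem:min} every $3$-periodic trajectory must be a critical point of this functional, hence coincides with that unique minimizer.

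The main obstacle is ensuring that no non-orthic $3$-periodic trajectory exists. The strict convexity above, combined with the fact that the billiard reflection law holding at all three bounce points is equivalent to being a critical point of the perimeter, resolves this; acuteness enters crucially here, and indeed the theorem as stated fails for right and obtuse triangles, where $t^\ast$ falls outside the open segment $(0,b)$.
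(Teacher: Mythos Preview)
Your argument is essentially correct and runs parallel to the paper's, but with a different organization and one step that needs tightening.

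\textbf{Comparison.} Both proofs hinge on Fagnano's reflection trick. The paper reflects a point on one side across the other two, observes that the two images together with the common vertex form an \emph{isosceles} triangle with apex angle twice the triangle's angle there, and concludes synthetically that the chord $|M_1M_2|$ is minimized exactly when $CM'$ is the altitude. Your coordinate computation of $|L_1'L_2'|^2$ is the same geometry in disguise: since $B$ lies on both reflecting lines, $BL_1'=BL_2'=BL'$ and $\angle L_1'BL_2'=2B$, giving $|L_1'L_2'|=2\,BL'\sin B$ directly---which is precisely your quadratic after applying the law of cosines in $\triangle ABL'$. So your analytic formula is correct but more laborious than the paper's one-line isosceles observation. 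The paper deduces that the orthic triangle is a billiard trajectory \emph{from} the minimality via Remark~\ref{rem:min}; you instead verify it first by the cyclic-quadrilateral angle chase, which is a valid and independent route. For uniqueness, the paper does a separate angle chase showing any $3$-periodic trajectory has sides parallel to those of $KLM$ and then argues geometrically that the two triangles must coincide; you appeal instead to a variational principle.

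\textbf{The gap.} Your final sentence in (b) asserts that ``every $3$-periodic trajectory must be a critical point of this functional, hence coincides with that unique minimizer.'' The word \emph{hence} is doing unjustified work: critical points of a general function need not be minimizers. You have shown only that the one-variable lower bound $t\mapsto|L_1'L_2'|$ is strictly convex, not that the three-variable perimeter functional is. To close the gap you can either (i) note that the perimeter $|K'L'|+|L'M'|+|M'K'|$ is a convex function of $(K',L',M')$ on the product of the three sides (each summand being a Euclidean norm), so any critical point is automatically a global minimizer; or (ii) argue directly that if the reflection law holds at \emph{all three} vertices, then in particular it holds at $K'$ and $M'$, forcing perimeter $=f(t)$, and the reflection law at $L'$ then forces $f'(t)=0$, whence $t=t^\ast$ by the strict convexity you already established. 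Either addition is short, but one of them is needed.
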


\begin{proof}
Let $M'$ be an arbitrary given point on side $AB$. In order to find points
	$K'\in BC$, $L'\in AC$ such that the triangle $K'L'M'$ has the minimal
	perimeter, denote by $M_1$, $M_2$ the points symmetric to $M'$ with
	respect to sides $BC$ and $AC$ respectively. Then $K'$ and $L'$ are
	intersection points of $M_1M_2$ with sides $BC$ and $AC$ respectively, see
	Figure \ref{fig:M1M2}.
	
	\begin{figure}[h]
		\centering
			\begin{tikzpicture}
				\filldraw[thick]
				(-3,0) circle (2pt) node[below left] {$A$} --
				(2,0) circle (2pt) node[below right] {$B$} --
				(0,3.5) circle (2pt) node[above right] {$C$}
				-- (-3,0);
				
				\draw[gray]
				 (-1,0)  -- (-3.30588, 1.97647)-- (3.52308, 2.58462) -- cycle;
				
				\draw[gray,fill=gray] (-3.30588, 1.97647) circle (2pt) node[left, black]{$M_2$};
				\draw[gray,fill=gray] (3.52308, 2.58462) circle (2pt) node[right, black]{$M_1$};
				
				\draw[thick, dashed]
				(-1.1406, 2.1693)  -- (0.668348, 2.33039)  -- (-1,0) -- cycle;
				
				\draw[black, fill=black]
				(-1.1406, 2.1693) circle (2pt) node[above left] {$L'$};
				\draw[black, fill=black]
				(0.668348, 2.33039) circle (2pt) node[above right] {$K'$};
				\draw[black, fill=black]
				(-1,0) circle (2pt) node[below right] {$M'$};

			\end{tikzpicture}
			\caption{For a fixed point $M'$ on $AB$, we construct points $K'$, $L'$ on the remaining two sides of the triangle, such that $\triangle K'L'M'$ has smallest  possible perimeter.}\label{fig:M1M2}
	\end{figure}
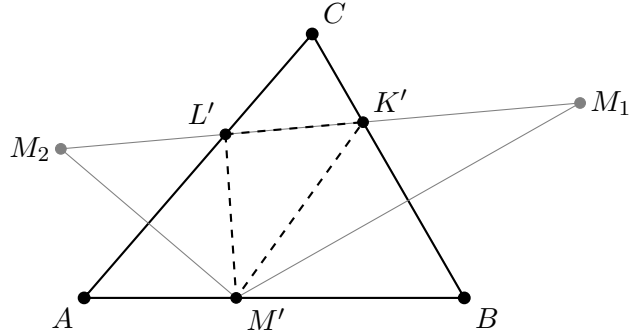
	
	The perimeter of $\triangle K'L'M'$ is equal to the length of segment $M_1M_2$.
	We observe that $M_1M_2$ is a side of the isosceles triangle $CM_1M_2$, whose angle $\angle M_1CM_2=2\angle BCA$ does not depend on the choice of point $M'$.
	Since $CM_1\cong CM_2\cong CM'$, the segment $M_1M_2$ is shortest when the point $M'$ is such that $CM'$ is the altitude
	of the triangle $ABC$ from the vertex $A$, i.e.~the minimal perimeter is attained when $M'=M$.
	It immediately follows that points $K'$ and $L'$, as constructed above, are then also the feet of the corresponding altitudes, see Figure
	\ref{fig:KLM}.
	
	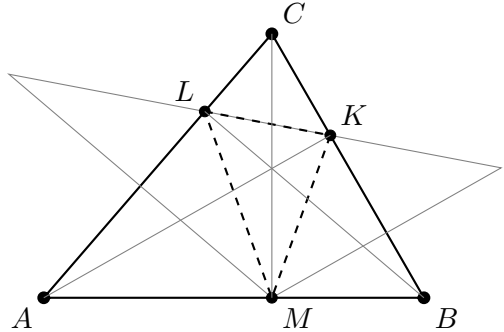
\begin{figure}[h]
		\begin{center}
			\begin{tikzpicture}
				\filldraw[thick]
				(-3,0) circle (2pt) node[below left] {$A$} --
				(2,0) circle (2pt) node[below right] {$B$} --
				(0,3.5) circle (2pt) node[above right] {$C$}
				-- (-3,0);
				
				\draw[gray]
				(0,0)  -- (-3.45882, 2.96471)--(3.01538, 1.72308) -- cycle;

				\draw[thick, dashed]
				(-0.882353, 2.47059)  -- (0.769231, 2.15385)  -- (0,0) -- cycle;
				
				\draw[black, fill=black]
				(-0.882353, 2.47059) circle (2pt) node[above left] {$L$};
				\draw[black, fill=black]
				(0.769231, 2.15385) circle (2pt) node[above right] {$K$};
				\draw[black, fill=black]
				(0,0) circle (2pt) node[below right] {$M$};
				
				\draw[gray]	(0,0)  -- (0, 3.5);
				\draw[gray]	(0.769231, 2.15385)  -- (-3, 0);
				\draw[gray]	(-0.882353, 2.47059)  -- (2, 0);
				
			\end{tikzpicture}
			\caption{The vertices of the triangle with smallest perimeter inscribed in $\triangle ABC$ are the feet $K$, $L$, $M$ of the altitudes.}\label{fig:KLM}
		\end{center}
	\end{figure}
	
	The minimizing property from Remark \ref{rem:min} implies that $\triangle KLM$, as the triangle with smallest perimeter inscribed in $\triangle ABC$ is a billiard trajectory within $\triangle ABC$.

Now, let us show uniqueness.
Suppose that $K_1L_1M_1$ is another periodic trajectory within the triangle $ABC$, such that $K_1\in BC$, $L_1\in AC$, $M_1\in AB$.
Then, according to the billiard reflection law, we can denote: $k:=\angle CK_1L_1=\angle BK_1M_1$,
$l:=\angle CL_1K_1=\angle AL_1M_1$,
$m:=\angle AM_1L_1=\angle BM_1K_1$.
Denoting by $\angle A$, $\angle B$, $\angle C$ the angles of the triangle $ABC$, we get that the sums of the angles in the triangles $AL_1M_1$, $BK_1M_1$, $CK_1L_1$ are:
$$
\angle A+l+m=\angle B+k+m=\angle C+k+l=180^{\circ},
$$
which gives: $k=\angle A$, $l=\angle B$, $m=\angle C$.

From there, the sides of the triangle $K_1L_1M_1$ are parallel to the corresponding sides of the triangle $KLM$.
The assumption that $K_1L_1M_1$ is a periodic billiard trajectory then implies $K_1=K$, $L_1=L$, $M_1=M$, as shown in Figure \ref{fig:6periodic}.  Namely, if $K_1$ is between $K$ and $C$, then $M_1$ lies in the interior of $\triangle ABC$. This follows from the similarity of triangles $KLM$ and $K_1L_1M_1$, by using the Thales theorem. The same argument shows that $M_1$ lies outside $\triangle ABC$ if $K_1$ is between $K$ and $B$.
	\begin{figure}[h]
	\begin{center}
		\begin{tikzpicture}
			\filldraw[thick]
			(-3,0) circle (2pt) node[below left] {$A$} --
			(2,0) circle (2pt) node[below right] {$B$} --
			(0,3.5) circle (2pt) node[above right] {$C$}
			-- (-3,0);
			
			\draw[thick, dashed]
			(-0.882353, 2.47059)  -- (0.769231, 2.15385)  -- (0,0) -- cycle;
			
			\draw[black, fill=black]
			(-0.882353, 2.47059) circle (2pt) node[above left] {$L$};
			\draw[black, fill=black]
			(0.769231, 2.15385) circle (2pt) node[above right] {$K$};
			\draw[black, fill=black]
			(0,0) circle (2pt) node[below right] {$M$};
			
			\draw[thick, gray]
			(1, 1.75)  -- (-1.14706, 2.16176)  -- (-0.375003,0) -- (0.538459, 2.5577) -- (-0.617645, 2.77941) -- (0.375003, 0) -- cycle ;

		\end{tikzpicture}
		\caption{The trajectories with segments parallel to the segments of $KLM$ are $6$-periodic.}\label{fig:6periodic}
	\end{center}
\end{figure}
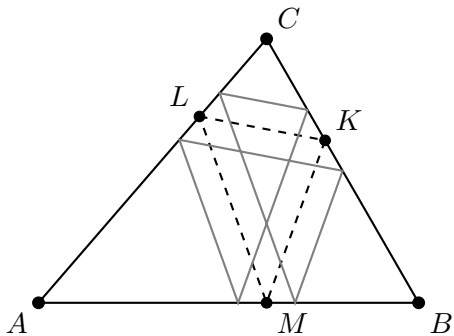
\end{proof}

	The previous proof  essentially relies on the assumption that $\triangle ABC$ is acute, since otherwise some of the feet of the altitudes are not inner points of the sides.
	Thus, additional discussion in needed for the right and obtuse triangles.
	
It is easy to see that there are periodic billiard trajectories within a right triangle: one of them, the polygonal line $KLMNMLK$, is shown in Figure
\ref{fig:right}.
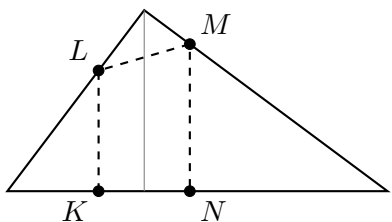
\begin{figure}[h]
	\begin{center}
		\begin{tikzpicture}[scale=1]
			\draw[thick](0,0)--(5,0)--(1.8,2.4)--cycle;
			
			\draw[gray](1.8,0)--(1.8,2.4);
			
			\draw[thick, dashed]
			(1.2,0)--(1.2, 1.6)--(2.4,1.95)--(2.4,0);
			
			\draw[black, fill=black]
			(1.2,0) circle (2pt) node[below left] {$K$};
			\draw[black, fill=black]
			(1.2, 1.6) circle (2pt) node[above left] {$L$};
			\draw[black, fill=black]
			(2.4,1.95) circle (2pt) node[above right] {$M$};
			\draw[black, fill=black]
			(2.4,0) circle (2pt) node[below right] {$N$};			
		\end{tikzpicture}
		\caption{A periodic trajectory of the billiard within a right triangle.}\label{fig:right}
	\end{center}
\end{figure}

After such elementary considerations for acute and right triangles, one can stay amazed by the fact that it is not known if billiards within general obtuse triangles have any periodic trajectories!
There are examples for some special cases and also an intriguing computer-based proof for the existence of periodic billiard trajectories when the obtuse angle does not exceed $100^{\circ}$, see \cites{Sch1, Sch2}.

\subsection{Theorems of Ceva and Menelaus}
\label{sec:ceva}

In this section, we will formulate the two classical theorems related to the geometry of triangles.

\begin{theorem}[Ceva's Theorem]\label{thm:ceva}
Let $ABC$ be a given triangle and $K$, $L$, $M$ points on the lines $BC$, $AC$, $AB$ respectively, such that none of them coincides with a vertex of the triangle.
Then the lines $AK$, $BL$, $CM$ are either concurrent or parallel if and only if:
	\begin{equation}\label{eq:ceva}
		{\frac {AM}{MB}}\cdot {\frac {BK}{KC}}\cdot {\frac {CL}{LA}}=1.
	\end{equation}
\end{theorem}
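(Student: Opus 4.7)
The plan is to prove both implications using ratios of signed areas, which handles the two geometric possibilities (concurrence and parallelism) in a uniform way. Throughout I interpret $AM/MB$, $BK/KC$, $CL/LA$ as signed ratios along the respective lines, so that the statement also covers the cases where $K$, $L$, or $M$ lies on the extension of a side; without signs the identity holds only when an even number of the three points fall on extensions.

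First I would establish the following \emph{area lemma}: for any point $P$ not on the line $BC$, if the line $AP$ meets $BC$ at $K$, then
\[
\frac{BK}{KC} = \frac{[ABP]}{[ACP]},
\]
where $[\,\cdot\,]$ denotes signed area. The proof is immediate, since the triangles $ABK$, $ACK$ share the altitude from $A$ and the triangles $PBK$, $PCK$ share the altitude from $P$, so both $\frac{[ABK]}{[ACK]}$ and $\frac{[PBK]}{[PCK]}$ equal $\frac{BK}{KC}$, and hence so does $\frac{[ABK]-[PBK]}{[ACK]-[PCK]} = \frac{[ABP]}{[ACP]}$. Applying the lemma at the assumed common intersection point $P$ of $AK$, $BL$, $CM$ gives
\[
\frac{AM}{MB} \cdot \frac{BK}{KC} \cdot \frac{CL}{LA} = \frac{[CAP]}{[CBP]} \cdot \frac{[ABP]}{[ACP]} \cdot \frac{[BCP]}{[BAP]} = 1,
\]
after cancellation using the antisymmetry of signed areas. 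The case of three parallel cevians is the limit in which $P$ recedes to infinity; it can also be checked directly by Thales' intercept theorem applied to the three pairs of parallel lines and the sides of the triangle.

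For the converse, assuming the Ceva identity holds, I would let $P$ be the intersection of $AK$ and $BL$ in the concurrent case and let $CP$ meet the line $AB$ at a point $M'$. The already-proved forward direction applied to the triple $AK$, $BL$, $CM'$ yields the same identity with $M'$ in place of $M$, whence $AM/MB = AM'/M'B$; this equality of signed ratios on a common line forces $M = M'$, so $CM$ passes through $P$. If instead $AK \parallel BL$, I would replace $P$ by the line through $C$ parallel to $AK$, meet it with $AB$ at $M'$, and apply the parallel-case forward direction to conclude $M = M'$ as before. The main obstacle, and the reason I would commit to signed quantities from the outset, is precisely the bookkeeping between the concurrent and parallel cases and between interior and exterior placements of $K$, $L$, $M$; the signed-area lemma is the single device that dissolves all of this case analysis in one stroke.
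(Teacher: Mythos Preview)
The paper does not actually prove Ceva's theorem: Section~\ref{sec:ceva} only \emph{formulates} it as a classical result being reviewed (``In this section, we will formulate the two classical theorems related to the geometry of triangles''), accompanied by a figure and a remark about signed lengths but no argument. So there is no proof in the paper for yours to be compared against; your write-up simply supplies what the authors chose to quote.

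Your signed-area approach is the standard one and is sound in outline, but the vertex orderings in the area ratios need care. Under the usual convention $[XYZ]=\tfrac12\det(Y-X,\,Z-X)$, with $K$ strictly between $B$ and $C$ one has $[ABK]/[ACK]<0$ while $BK/KC>0$, so the identity you invoke should read $BK/KC=[ABK]/[AKC]$ rather than $[ABK]/[ACK]$. If one carries your stated orderings through verbatim, the telescoping product $\dfrac{[CAP]}{[CBP]}\cdot\dfrac{[ABP]}{[ACP]}\cdot\dfrac{[BCP]}{[BAP]}$ evaluates to $-1$, not $+1$ (try it at the centroid of a concrete triangle). This is purely cosmetic: once the cyclic order of vertices is kept consistent, the cancellation gives $+1$ as required. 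The subtraction identity $[ABK]-[PBK]=[ABP]$ for collinear $A,P,K$ is genuinely correct, and your handling of the parallel case and of the converse (intersect two cevians, produce $M'$, compare ratios) is the usual clean argument.
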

Ceva's theorem is illustrated in Figure \ref{fig:ceva}.
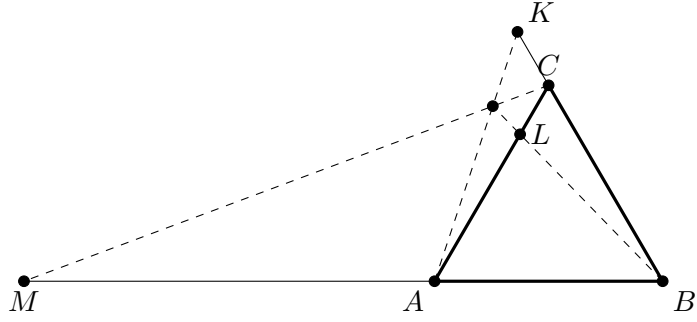
\begin{figure}[h]
	\begin{center}
		\begin{tikzpicture}
\coordinate (A) at (-1.5, 0);
\coordinate (B) at (1.5, 0);
\coordinate (C) at (0,1.5*1.73205);
\coordinate (K) at (-0.409091, 3.30664);
\coordinate (L) at (-0.375, 1.94856);
\coordinate (M) at (-6.9, 0);
\coordinate (O) at (-0.734043, 2.32169);

			\draw[very thick](A)--(B)--(C)--cycle;
			
\draw(A)--(M);
\draw(C)--(K);

\draw[dashed](B)--(O);
\draw[dashed](C)--(M);
\draw[dashed](A)--(K);
			
			\draw[black, fill=black]
			(A) circle (2pt) node[below left] {$A$};
			\draw[black, fill=black]
			(B) circle (2pt) node[below right] {$B$};
			\draw[black, fill=black]
			(C) circle (2pt) node[above] {$C$};
			\draw[black, fill=black]
			(K) circle (2pt) node[above right] {$K$};
			\draw[black, fill=black]
			(L) circle (2pt) node[right] {$L$};
			\draw[black, fill=black]
			(M) circle (2pt) node[below] {$M$};
			\draw[black, fill=black]
			(O) circle (2pt);
		\end{tikzpicture}
		\caption{Ceva's theorem.}\label{fig:ceva}
	\end{center}
\end{figure}
\begin{remark}In Ceva's Theorem, the signed lengths of segments are used in \eqref{eq:ceva}.
For example, in Figure \ref{fig:ceva}, the quantities $AM/MB$ and $BK/KC$ are negative, while $CL/LA$ is negative.
\end{remark}

\begin{remark}Notice that Ceva's theorem can be applied to an acute triangle $ABC$ and its $3$-periodic billiard trajectory $KLM$, since the altitudes intersect at one point, see Figure \ref{fig:KLM} and Theorem \ref{th:acute}.
\end{remark}

We will use  Ceva's theorem to prove the following statement.

\begin{lemma}\label{lem:ms} Let $ABC$ be a given triangle and $K$, $L$, $M$ its feet of the altitudes from $A$, $B$, $C$ respectively.
	Then there exist real numbers $m_1$, $m_2$, $m_3$ such that point $K$ divides $BC$ in the ratio $m_2:m_3$, point $L$ divides $CA$ in the ratio $m_3:m_1$, and $M$ divides $AB$ in the ratio $m_1:m_2$.
	Moreover
	\begin{equation}\label{eq:ms}
		m_1:m_2:m_3=(b^2+c^2-a^2):(a^2+c^2-b^2):(a^2+b^2-c^2),
	\end{equation}
	where $a$, $b$, $c$ respectively denote the lengths of the sides $BC$, $AB$, $AC$.
\end{lemma}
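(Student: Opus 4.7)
The plan is a two-stage argument: first prove the existence of $m_1, m_2, m_3$ encoding the three ratios, then compute them explicitly.

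For existence, I will invoke Ceva's theorem (Theorem \ref{thm:ceva}) applied to the three altitudes, which are concurrent at the orthocenter. This yields
\[
\frac{AM}{MB}\cdot\frac{BK}{KC}\cdot\frac{CL}{LA}=1.
\]
Writing $AM/MB=\alpha$, $BK/KC=\beta$ and $CL/LA=\gamma$ with $\alpha\beta\gamma=1$, the numbers $m_1=\alpha\beta$, $m_2=\beta$, $m_3=1$ satisfy $m_1/m_2=\alpha$, $m_2/m_3=\beta$, $m_3/m_1=1/(\alpha\beta)=\gamma$, giving the required triple (unique up to a common rescaling). Thus Ceva's identity is exactly the compatibility condition that allows the three cevian ratios to be packaged into a single barycentric-type triple $(m_1:m_2:m_3)$.

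For the explicit formula, I will compute the lengths of the sub-segments cut off by the feet of the altitudes. Since $K$ is the foot of the perpendicular from $A$ to $BC$, in the right triangles $ABK$ and $ACK$ one has $BK=AB\cos B$ and $KC=AC\cos C$, whence
\[
\frac{BK}{KC}=\frac{AB\cos B}{AC\cos C}.
\]
Substituting the law-of-cosines expressions
\[
\cos A=\frac{b^2+c^2-a^2}{2bc},\qquad \cos B=\frac{a^2+c^2-b^2}{2ac},\qquad \cos C=\frac{a^2+b^2-c^2}{2ab}
\]
(with the standard opposite-side convention matching the paper's intent) and cancelling common factors, the ratio $BK:KC$ simplifies to $(a^2+c^2-b^2):(a^2+b^2-c^2)$, which matches $m_2:m_3$. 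The analogous computations for $M$ (foot from $C$, giving $AM:MB=(b^2+c^2-a^2):(a^2+c^2-b^2)$) and for $L$ (foot from $B$, giving $CL:LA=(a^2+b^2-c^2):(b^2+c^2-a^2)$) will produce the remaining two ratios. A quick check confirms that these three ratios are mutually consistent with the single triple $(m_1:m_2:m_3)=(b^2+c^2-a^2):(a^2+c^2-b^2):(a^2+b^2-c^2)$, so \eqref{eq:ms} follows.

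There is no real obstacle here — the argument is a direct application of Ceva combined with elementary trigonometry. The only point requiring care is bookkeeping: keeping the cyclic assignment of indices $(K,B,C)\mapsto m_2,m_3$, $(L,C,A)\mapsto m_3,m_1$, $(M,A,B)\mapsto m_1,m_2$ consistent with the opposite-vertex convention for $a,b,c$, so that the three computed ratios glue into one triple rather than producing sign or index mismatches.
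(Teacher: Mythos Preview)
Your argument is correct. The existence step via Ceva's theorem is identical to the paper's. For the explicit formula, however, you take a shorter route: you read off $BK=AB\cos B$ and $KC=AC\cos C$ directly from the right triangles at the foot of the altitude and then substitute the law of cosines. The paper instead chains together the similarities $\triangle KBA\sim\triangle MBC$, $\triangle LCB\sim\triangle KCA$, $\triangle MAC\sim\triangle LAB$ to obtain $m_1:m_2:m_3=(AL\cdot AC):(BM\cdot BA):(CK\cdot CB)$, applies the Pythagorean theorem and the area relation $a\cdot AK=b\cdot BL=c\cdot CM=2\mathcal{A}$, and finally invokes Heron's formula to reach \eqref{eq:ms}. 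Your approach is more economical and avoids Heron entirely; the paper's route produces the intermediate expression $\sqrt{b^2c^2-4\mathcal{A}^2}:\sqrt{a^2c^2-4\mathcal{A}^2}:\sqrt{a^2b^2-4\mathcal{A}^2}$ along the way but is otherwise just longer.

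Your caution about bookkeeping is well placed: the lemma as stated assigns $b=AB$ and $c=AC$, whereas the paper's own proof (e.g.\ the line $CK^2=b^2-AK^2$) and your law-of-cosines formulas tacitly use the standard opposite-side convention $b=CA$, $c=AB$. The displayed ratio \eqref{eq:ms} is the one obtained under the standard convention, so your computation matches it.
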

\begin{proof}
	Since the altitudes of a triangle intersect at the orthocenter, Ceva's theorem implies the existence of numbers $m_1$, $m_2$, $m_3$.
	The similarities of right triangles $\triangle KBA\sim \triangle MBC$, $\triangle LCB\sim \triangle KCA$, $\triangle MAC\sim \triangle LAB$ imply the following relations: $KB/AB=MB/BC$, $CL/CB=CK/AC$, $AM/AC=LA/AB$, yielding
	$$
	m_1:m_2:m_3=(AL\cdot AC):(BM\cdot BA):(CK\cdot CB).
	$$
	The Pythagorean  theorem gives
	$$
	CK^2=b^2-AK^2,\quad AL^2=c^2-BL^2,\quad BM^2=a^2-CM^2,
	$$
	thus
	$$
	m_1:m_2:m_3=b\sqrt{c^2-BL^2}: c \sqrt{a^2-CM^2}:a\sqrt{b^2-AK^2}.
	$$
	Observing that $a\cdot AK=b\cdot BL= c\cdot CM=2\mathcal A$, where $\mathcal{A}$ is the area of the triangle, we get:
	$$
	m_1:m_2:m_3
	=
	\sqrt{c^2b^2-4{\mathcal A}^2}:\sqrt{a^2c^2-4{\mathcal A}^2}:\sqrt{a^2b^2-4{\mathcal A}^2}.
	$$
Finally, applying the Heron formula $\mathcal{A}=\sqrt{s(s-a)(s-b)(s-c)}$, with $s=(a+b+c)/2$, and simplifying, we get \eqref{eq:ms}.
\end{proof}

\begin{theorem}[Theorem of Menelaus]\label{th:menelaus}
Let $\triangle ABC$ be a given triangle and $p$ a line intersecting the lines $BC$, $AC$, $AB$ in points $P$, $Q$, $R$ respectively, which do not coincide with any of the vertices of the triangle.
Then
\begin{equation}\label{eq:menelaus}
	{\frac {AR}{RB}}\cdot {\frac {BP}{PC}}\cdot {\frac {CQ}{QA}}=-1.
\end{equation}

 Moreover, the converse is also true: if points $P$, $Q$, $R$ are chosen on the lines $AB$, $BC$, $CA$, such that \eqref{eq:menelaus} is satisfied, then $P$, $Q$, $R$ are collinear.
\end{theorem}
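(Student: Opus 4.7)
The plan is to establish the direct implication by a classical similar-triangles argument, and then to obtain the converse as an immediate consequence via a uniqueness-of-intersection argument.

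For the direct statement, I would drop perpendiculars from the vertices $A$, $B$, $C$ onto the transversal line $p$ and denote their signed lengths by $h_A$, $h_B$, $h_C$, assigning opposite signs to the two half-planes determined by $p$. Since $R$ is the intersection of line $AB$ with $p$, the right triangles with legs $h_A$, $h_B$ and hypotenuses $AR$, $RB$ are similar, which after taking signs into account gives $AR/RB = -h_A/h_B$; the minus sign records the fact that $R$ lies inside segment $AB$ precisely when $A$ and $B$ are on opposite sides of $p$, i.e.~when $h_A$ and $h_B$ have opposite signs. Analogously one obtains $BP/PC = -h_B/h_C$ and $CQ/QA = -h_C/h_A$. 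Multiplying these three identities, the heights $h_A$, $h_B$, $h_C$ cancel telescopically, and only the product of the three minus signs survives, yielding exactly $-1$, as required.

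For the converse, let $R'$ denote the intersection point of the line through $P$ and $Q$ with the line $AB$; such a point exists and is distinct from $A$ and $B$ under the usual genericity assumptions. Since $P$, $Q$, $R'$ are collinear by construction, the direct implication applied to this transversal yields
$$\frac{AR'}{R'B} \cdot \frac{BP}{PC} \cdot \frac{CQ}{QA} = -1.$$
Comparing with the hypothesis $(AR/RB)(BP/PC)(CQ/QA) = -1$, we deduce $AR/RB = AR'/R'B$. Because the map sending a point $X$ on the line $AB$ (distinct from $B$) to the signed ratio $AX/XB$ is injective, we conclude $R = R'$, so $R$ lies on the line $PQ$ and the three points are collinear.

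The main obstacle is purely bookkeeping: tracking the signs carefully, both in the initial similar-triangles step and in verifying that the three minus signs really combine to $-1$ regardless of the configuration (the transversal may cut one side internally and two externally, or all three externally). No genuinely new idea beyond similarity is required, but the signed convention for the ratios, already used for Ceva's theorem in \eqref{eq:ceva}, is essential, since the unsigned product would equal $+1$ and the statement would lose its discriminating content between the concurrent and the collinear cases.
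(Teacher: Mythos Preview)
Your proof is correct and follows the standard classical argument via signed perpendicular distances and a uniqueness step for the converse. Note, however, that the paper does not actually prove this theorem: it is stated as a well-known classical result, illustrated by a figure, and then immediately used to derive Corollary~\ref{cor:exterior-angles}. So there is no ``paper's own proof'' to compare against; you have supplied a valid proof where the paper simply cites the result.
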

The theorem of Menelaus is illustrated in Figure \ref{fig:menelaus}.
\begin{figure}[h]
	\begin{center}
		\begin{tikzpicture}
		\begin{scope}[rotate=-120]
			\coordinate (A) at (-1.5, 0);
			\coordinate (B) at (1.5, 0);
			\coordinate (C) at (0,1.5*1.73205);
			\coordinate (K) at (-0.409091, 3.30664);
			\coordinate (L) at (-0.375, 1.94856);
			\coordinate (M) at (-0.326087, 0);

			\draw[very thick](A)--(B)--(C)--cycle;
			
			\draw(C)--(K);
			
			\draw[very thick,gray,dashed](K)--(M);
			
				\draw[black, fill=black]
			(A) circle (2pt) node[above] {$A$};
			\draw[black, fill=black]
			(B) circle (2pt) node[below left] {$B$};
			\draw[black, fill=black]
			(C) circle (2pt) node[below] {$C$};
			
			\draw[black, fill=black]
			(K) circle (2pt) node[right] {$P$};
			\draw[black, fill=black]
			(L) circle (2pt) node[above right] {$Q$};
			\draw[black, fill=black]
			(M) circle (2pt) node[above left] {$R$};
		\end{scope}	
		\end{tikzpicture}
		\caption{Theorem of Menelaus.}\label{fig:menelaus}
	\end{center}
\end{figure}
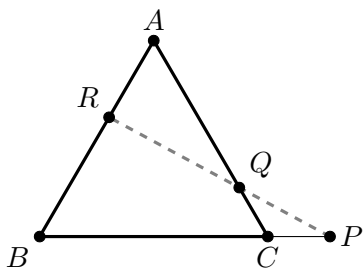

\begin{corollary}\label{cor:exterior-angles}
For a given triangle, consider the points where the bisectors of it exterior angles meet the opposite sides.
Then those three points are collinear.
\end{corollary}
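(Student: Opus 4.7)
The plan is to deduce this from the converse of Menelaus's theorem (Theorem \ref{th:menelaus}). Write $P$, $Q$, $R$ for the points where the external bisectors at $A$, $B$, $C$ meet the lines $BC$, $CA$, $AB$ respectively, and set $a=BC$, $b=AB$, $c=AC$ as in Lemma \ref{lem:ms}. Since \eqref{eq:menelaus} uses signed lengths and requires the product to equal $-1$, the key input I need is that each external bisector divides the opposite side \emph{externally} in the ratio of the two adjacent sides.

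The first step is therefore to recall (or briefly derive) the external angle bisector theorem: the external bisector from $A$ meets line $BC$ at a point $P$ with $\overline{BP}/\overline{PC}=-b/c$, and analogously $\overline{CQ}/\overline{QA}=-a/c$ and $\overline{AR}/\overline{RB}=-c/a$ with the appropriate permutation. The cleanest derivation is the classical one: drop from $C$ a line parallel to the external bisector at $A$, use similar triangles to read off the ratio, and observe that $P$ lies outside segment $BC$, giving the negative sign. I would present this as a short lemma or simply cite it, since it is standard.

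The second step is then a one-line calculation: multiplying the three ratios gives
\[
\frac{AR}{RB}\cdot\frac{BP}{PC}\cdot\frac{CQ}{QA}=\left(-\frac{c}{a}\right)\left(-\frac{b}{c}\right)\left(-\frac{a}{b}\right)=-1,
\]
so the converse part of Theorem \ref{th:menelaus} yields the collinearity of $P$, $Q$, $R$.

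The only real pitfall I foresee is sign bookkeeping: one must consistently use oriented ratios along each side and check that exactly three minus signs appear (one from each external division), so that the product is $-1$ rather than $+1$; otherwise one would accidentally be proving concurrency via Ceva's theorem instead. A minor side remark is that one should note that the external bisector at $A$ is not parallel to $BC$ unless $b=c$, in which case the corresponding point is at infinity and the statement is interpreted projectively; assuming the triangle is scalene avoids this degeneracy, and the isoceles case follows by continuity or by observing that the line at infinity still contains the other two intersection points.
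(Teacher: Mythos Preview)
Your approach is essentially identical to the paper's: cite the external angle bisector theorem to get the three signed ratios and then apply the converse of Menelaus's theorem. One small slip: with the paper's conventions $a=BC$, $b=AB$, $c=AC$, the external bisector from $B$ gives $\overline{CQ}/\overline{QA}=-a/b$, not $-a/c$; your displayed product already uses the correct value, so this is just a typo in the text.
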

\begin{proof}
Denote by $a$, $b$, $c$ respectively the bisectors of the exterior angles at the vertices $A$, $B$, $C$ of the triangle, and by $P$, $Q$, $R$ the intersections of those bisectors with the lines $BC$, $AC$, $AB$, as shown in Figure \ref{fig:exterior-angles}.
\begin{figure}[h]
	\begin{center}
		\begin{tikzpicture}
			\begin{scope}[rotate=120]
			\coordinate (A) at (-1.5, 0);
			\coordinate (B) at (1.5, 0);
			\coordinate (C) at (-0.75, 1.29904);
			\coordinate (P) at (-3, 2.59808);
			\coordinate (Q) at (4.09808, 9.69615);
			\coordinate (R) at (-5.59808, 0);

			\draw[very thick,fill=gray!30](A)--(B)--(C)--cycle;
			
			\draw[thin,gray](A)--(R);
			\draw[thin,gray](P)--(C)--(Q);
			
			\draw[very thick, gray](A)--(P);
			\draw[very thick, gray](B)--(Q);
			\draw[very thick, gray](C)--(R);
			
			\draw[very thick,dashed](Q)--(R);
			
			\draw[black, fill=black]
			(A) circle (2pt) node[right] {$A$};
			\draw[black, fill=black]
			(B) circle (2pt) node[above] {$B$};
			\draw[black, fill=black]
			(C) circle (2pt) node[above left]{$C$};
			
			\draw[black, fill=black]
			(P) circle (2pt) node[below left] {$P$};
			\draw[black, fill=black]
			(Q) circle (2pt) node[left] {$Q$};
			\draw[black, fill=black]
			(R) circle (2pt) node[below] {$R$};
			
			\node at (-2., 1.29904){$a$};
			\node at (-3.17404, 0.9){$c$};
			\node at (3, 4.84808){$b$};
		\end{scope}
		\end{tikzpicture}
		\caption{The bisectors of exterior angles of a triangle meet the opposite sides at collinear points.}\label{fig:exterior-angles}
	\end{center}
\end{figure}
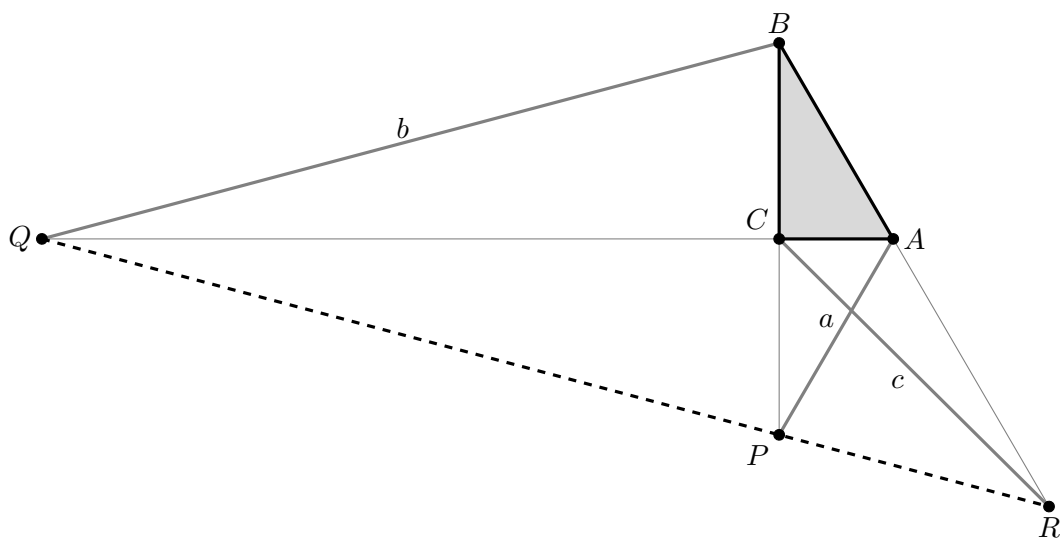

Since the intersection of the exterior bisector externally divides the opposite side in the ratio of two remaining sides of the triangle, the Theorem of Menelaus (Theorem \ref{th:menelaus}) gives that $P$, $Q$, $R$ are collinear.
\end{proof}

\begin{theorem}[Simson's line]\label{th:simson}
Let a triangle $ABC$ and a point $S$ be given in the plane.
Then the orthogonal projections of that point to the sides of the triangle are collinear if and only if point $S$ belongs to the circumcircle of the triangle.
(See Figure \ref{fig:simson}).
\end{theorem}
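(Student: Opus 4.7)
The plan is to establish both directions of the equivalence simultaneously by working with directed angles modulo $180^{\circ}$ and exploiting the two cyclic quadrilaterals that fall out of the right angles at the feet of the perpendiculars. Let $P$, $Q$, $R$ denote the orthogonal projections of $S$ onto the lines $BC$, $CA$, $AB$ respectively. The first observation is that $\angle SPB = \angle SRB = 90^{\circ}$ forces the four points $B, P, S, R$ onto a common circle, namely the one with diameter $BS$; by the same argument, $C, P, S, Q$ lie on a circle with diameter $CS$.

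Next, I would read the inscribed angle theorem in its directed-angle form off these two circles. Combined with the fact that $R$ lies on line $AB$ and $Q$ on line $AC$, this produces the identities
\[
\angle(PR,PS) = \angle(BR,BS) = \angle(BA,BS), \qquad \angle(PQ,PS) = \angle(CQ,CS) = \angle(CA,CS).
\]
The feet $R, P, Q$ are collinear if and only if $\angle(PR,PS) \equiv \angle(PQ,PS) \pmod{180^{\circ}}$, so these identities collapse the whole theorem to the equivalence between
\[
\angle(BA,BS) \equiv \angle(CA,CS) \pmod{180^{\circ}}
\]
and the concyclicity of $A, B, S, C$, which is precisely the inscribed angle criterion for $S$ to lie on the circumcircle of $\triangle ABC$.

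The main obstacle I anticipate is the bookkeeping of orientations: depending on where $S$ sits in the plane, some of the feet $P, Q, R$ may land on different sides of the vertices of $\triangle ABC$, and plain unsigned angle identities would come out sometimes as equalities and sometimes as supplementarities. Adopting directed angles modulo $180^{\circ}$ from the outset absorbs all such case distinctions into the notation and makes the ``if and only if'' a single chain of identities handling both implications uniformly. As a more computational alternative in the spirit of Section \ref{sec:ceva}, one could instead apply the converse of the Theorem of Menelaus (Theorem \ref{th:menelaus}) to $\triangle ABC$ with the three feet of perpendiculars, after writing the relevant signed ratios as $AR/RB = -SA\cos\angle SAB/(SB\cos\angle SBA)$ together with its cyclic analogues, and checking that the Menelaus product equals $-1$ precisely when $A, B, S, C$ are concyclic; this route is feasible but requires more delicate sign analysis.
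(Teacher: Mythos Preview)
Your argument is correct: the two auxiliary circles on diameters $BS$ and $CS$ together with the directed-angle form of the inscribed angle theorem reduce the collinearity of $P$, $Q$, $R$ to the concyclicity of $A$, $B$, $C$, $S$, and the use of directed angles modulo $180^{\circ}$ legitimately handles all configurations at once. The alternative Menelaus route you sketch is also viable.

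There is, however, nothing to compare against: in the paper Theorem~\ref{th:simson} is stated as a classical result without proof --- it is only invoked later, in Lemma~\ref{lem:F}, as a black box. So your proposal is not replicating or diverging from the paper's argument; it is simply supplying a (standard and correct) proof where the paper gives none.
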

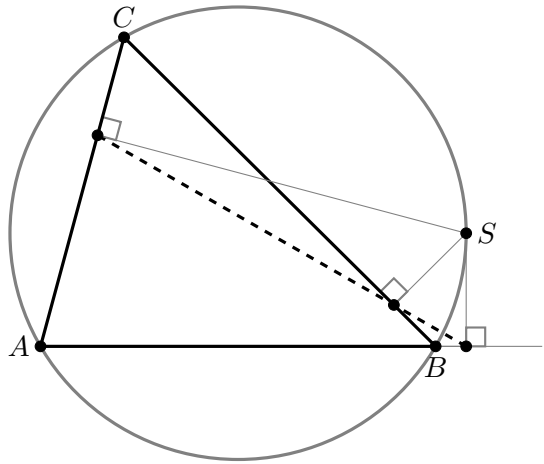
\begin{figure}[h]
	\begin{center}
		\begin{tikzpicture}
			\coordinate (A) at (-2.59808, -1.5);
			\coordinate (B) at (2.59808, -1.5);
			\coordinate (C) at (-1.5, 2.59808);
			\coordinate (P) at (3,0);
			\coordinate (P1) at (3., -1.5);
			\coordinate (P2) at (-1.84808, 1.29904);
			\coordinate (P3) at (2.04904, -0.950962);
			\coordinate (O) at (0,0);
			\coordinate(B1) at (4,-1.5);
			
				\tkzMarkRightAngle[thick,draw=gray](C,P2,P)
				\tkzMarkRightAngle[thick,draw=gray](C,P3,P)
				\tkzMarkRightAngle[thick,draw=gray](B1,P1,P)
			
			\draw[very thick](A)--(B)--(C)--cycle;
			
			\draw[very thick,dashed](P1)--(P2);
			
			\draw[thin,gray](P3)--(P)--(P2);
			\draw[thin,gray](P)--(P1)--(B)--(B1);
			\draw[gray,very thick] (O) circle (3);
			
			\draw[black, fill=black]
			(A) circle (2pt) node[left]{$A$};
			\draw[black, fill=black]
			(B) circle (2pt)node[below]{$B$};
			\draw[black, fill=black]
			(C) circle (2pt)node[above]{$C$};
			\draw[black, fill=black]
			(P) circle (2pt)node[right]{$S$};
			\draw[black, fill=black]
			(P1) circle (2pt);
				\draw[black, fill=black]
			(P2) circle (2pt);
			\draw[black, fill=black]
			(P3) circle (2pt);

		\end{tikzpicture}
		\caption{Simson's line.}\label{fig:simson}
	\end{center}
\end{figure}

\section{Conics}\label{sec:conics}
\subsection{Elliptical billiards}\label{sec:elliptical}

In this section, we review most important properties of billiards within an ellipse.
The most famous one is the focal property of elliptic billiards, which is illustrated in Figure \ref{fig:focal.property.ellipse}.

\begin{proposition} [First focal property of ellipses]
\label{prop:focal.property}%
Let $\mathcal E$ be an ellipse
with foci $F_1,F_2$ and $A\in\mathcal E$ an arbitrary point. Then
segments $AF_1$, $AF_2$ satisfy the billiard reflection law off $\mathcal E$.
\end{proposition}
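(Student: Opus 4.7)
The plan is to combine the defining focal-sum property of the ellipse with the minimization characterization of the billiard reflection stated in Remark \ref{rem:min}. Let $t$ denote the tangent line to $\mathcal{E}$ at $A$, and recall that $\mathcal{E} = \{P : |PF_1| + |PF_2| = 2a\}$, where $2a$ is the length of the major axis, while the interior of $\mathcal{E}$ coincides with $\{P : |PF_1| + |PF_2| < 2a\}$ and its exterior with $\{P : |PF_1| + |PF_2| > 2a\}$.

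First I would observe that since $\mathcal{E}$ is a smooth convex curve, its tangent line $t$ at $A$ touches $\mathcal{E}$ only at $A$, while every other point of $t$ lies strictly in the exterior of $\mathcal{E}$. It follows that for any $A' \in t \setminus \{A\}$ one has
\begin{equation*}
|F_1 A'| + |A' F_2| > 2a = |F_1 A| + |A F_2|,
\end{equation*}
so that $A$ is the unique minimizer of the function $P \mapsto |F_1 P| + |P F_2|$ on the line $t$. Note also that both foci lie in the interior of $\mathcal{E}$ and hence on the same side of $t$, which is what allows Remark \ref{rem:min} to be applied with $X = F_1$, $Y = F_2$, $\ell = t$.

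Next, by Remark \ref{rem:min}, among all points $L \in t$ the quantity $|F_1 L| + |L F_2|$ is minimal precisely when the two segments $F_1 L$ and $L F_2$ form congruent angles with $t$, i.e.\ satisfy the billiard reflection law off $t$. Since we have just shown this minimum is attained at $L = A$, the segments $AF_1$ and $AF_2$ satisfy the billiard reflection law off $t$, which by definition is the reflection law off $\mathcal{E}$ at $A$.

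The only delicate point is the convexity statement that the tangent line to $\mathcal{E}$ at $A$ meets $\mathcal{E}$ only at $A$ and otherwise lies in its exterior; this I would justify briefly from the focal-sum characterization of $\mathcal{E}$, so that the rest of the argument reduces to a direct application of Remark \ref{rem:min}.
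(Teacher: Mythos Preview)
The paper states Proposition~\ref{prop:focal.property} without proof, treating it as a well-known classical fact (it is accompanied only by Figure~\ref{fig:focal.property.ellipse}). Your argument is correct and is in fact the standard proof: it combines the focal-sum description of the ellipse with the variational characterization of billiard reflection from Remark~\ref{rem:min}, exactly in the spirit the paper uses elsewhere (e.g.\ in Lemma~\ref{lem:OX}). There is nothing further to compare.
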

\begin{figure}[h]
	\begin{center}
	\begin{tikzpicture}
\tikzmath{\koef=4;}
\coordinate (T2) at (2.598+\koef*0.25,1-\koef*2.598/9);
\coordinate (T1) at (2.598-\koef*0.25,1+\koef*2.598/9);

\draw[white] (T1) -- (2.598,1) coordinate (A)
-- (-2.235,0) coordinate (F1)
pic [fill=black!40, angle radius=20pt] {angle = T1--A--F1};

\draw[white] (T2) -- (A)
-- (2.235,0) coordinate (F2)
pic [fill=black!40, angle radius=20pt] {angle = F2--A--T2};

\draw[thick] (0,0) ellipse (3 and 2);
\draw[thick,dashed](F2)--(A)--(F1);

\draw[black, fill=gray, thick]
(F2) circle (2pt) node[below right] {$F_2$};
\draw[black, fill=gray, thick]
 (F1) circle (2pt) node[below right] {$F_1$};
\draw[black, fill=black]
(A) circle (2pt) node[above right] {$A$};

\draw (T1) -- (T2);

\node at (-2.5,1.5){$\mathcal{E}$};

		\end{tikzpicture}
		\caption{First focal property of
			ellipses.}\label{fig:focal.property.ellipse}
	\end{center}
\end{figure}
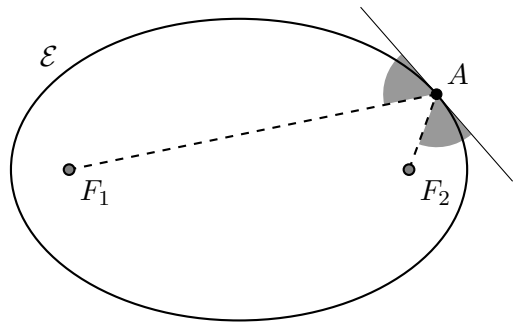

If the billiard particle is launched from one focus of the ellipse, the focal property implies that the segments of the trajectory alternately contain the foci $F_1$ and $F_2$. Next proposition characterizes the trajectories which do not contain the foci.

\begin{proposition}\label{prop:ellipse.caustic}
Suppose that the billiard particle is traveling along segment $s$ which does not contain any focus $F_1$, $F_2$ of the boundary ellipse $\mathcal{E}$.
Then we have:
\begin{itemize}
	\item If both foci of $\mathcal{E}$ are on the same side of $s$, then there is unique ellipse $\mathcal{E}'$ sharing the same foci with $\mathcal{E}$ and touching the segment. Moreover, after reflection off the boundary, the next segment of the billiard trajectory will also be tangent to $\mathcal{E}'$, see Figure \ref{fig:caustic.property.ellipse};
	\begin{figure}[h]
		\begin{center}
			\begin{tikzpicture}
				\tikzmath{\koef=4;}
				\coordinate (T2) at (2.598+\koef*0.25,1-\koef*2.598/9);
				\coordinate (T1) at (2.598-\koef*0.25,1+\koef*2.598/9);

				\draw[white] (T2) -- (2.598,1) coordinate (A)
				-- (2.68967, -0.885864) coordinate (B)
				pic [fill=black!40, angle radius=20pt] {angle = B--A--T2};
				
				\draw[white] (T1) -- (A)
				-- (-1.38561, 1.77389) coordinate (C)
				pic [fill=black!40, angle radius=20pt] {angle = T1--A--C};
				
				\draw[thick,dashed](B)--(A)--(C);

				\draw[thick] (0,0) ellipse (3 and 2);
				\draw[thick] (0,0) ellipse (2.646 and 1.414);
				\draw[black, fill=gray, thick] 	(2.235,0) circle (2pt);
				\draw[black, fill=gray, thick] 	(-2.235,0) circle (2pt);
				\draw[black, fill=black] 	(A) circle (2pt) node[above right] {$A$};
				
				\draw (T1) -- (T2);

			\end{tikzpicture}
			\caption{Two segments satisfying the billiard reflection law of ellipse $\mathcal{E}$ are tangent to the same conic, which is confocal with $\mathcal{E}$.}\label{fig:caustic.property.ellipse}
		\end{center}
	\end{figure}
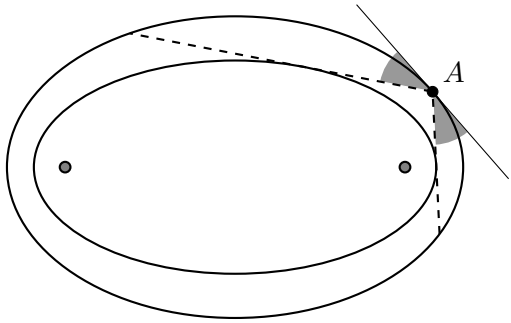

	\item If $s$ crosses the segment $F_1F_2$, then there is a hyperbola $\mathcal{H}$ sharing the same foci with $\mathcal{E}$ and touching the line containing $s$. Moreover, after reflection off the boundary, the next segment of the billiard trajectory will also be tangent to $\mathcal{H}$.
\end{itemize}
\end{proposition}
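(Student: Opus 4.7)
The plan is to treat both parts of the proposition uniformly via the first focal property (Proposition \ref{prop:focal.property}) together with the classical reflection-of-focus construction. First I would establish existence and uniqueness of a confocal conic tangent to the line $\ell$ containing $s$. Reflect $F_2$ across $\ell$ to obtain $F_2^{*}$; generically $F_2^{*}\neq F_1$. The line from $F_1$ to $F_2^{*}$ meets $\ell$ at a unique point $Q$, which is the extremum on $\ell$ of $X \mapsto |XF_1| + |XF_2|$ (minimum, ellipse-caustic case, corresponding to both foci on the same side of $\ell$) or of $X \mapsto \bigl| |XF_1| - |XF_2| \bigr|$ (maximum, hyperbola-caustic case, $\ell$ separating the foci). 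In both cases the extremal value equals $|F_1 F_2^{*}|$, and the confocal conic through $Q$ with major (respectively transverse) axis $2a' := |F_1 F_2^{*}|$ is tangent to $\ell$ at $Q$ and is uniquely characterised by this property. The case split is controlled by the sign of $|F_1 F_2^{*}| - |F_1 F_2|$: same side gives $|F_1 F_2^{*}| > 2c$ and a confocal ellipse, separating gives $|F_1 F_2^{*}| < 2c$ and a confocal hyperbola.

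For the invariance under billiard reflection, let $ABC$ denote two consecutive segments of the trajectory off $\mathcal{E}$ and let $t_B$ be the tangent to $\mathcal{E}$ at $B$. Combining the billiard law $\angle(BA, t_B) = \angle(BC, t_B)$ with the first focal property $\angle(BF_1, t_B) = \angle(BF_2, t_B)$ yields the angle identities
\begin{equation*}
\angle F_1 B A = \angle F_2 B C, \qquad \angle F_2 B A = \angle F_1 B C.
\end{equation*}
Let $F_2^{(1)}$ be the reflection of $F_2$ across line $AB$ and $F_1^{(2)}$ the reflection of $F_1$ across line $BC$. Since reflection preserves the distance from any point on the axis, $|BF_2^{(1)}| = |BF_2|$ and $|BF_1^{(2)}| = |BF_1|$, while the angle identities above force $\angle F_1 B F_2^{(1)} = \angle F_2 B F_1^{(2)}$. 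The law of cosines in the two triangles then yields
\begin{equation*}
|F_1 F_2^{(1)}| = |F_2 F_1^{(2)}|,
\end{equation*}
and by the first paragraph this common value is the parameter $2a'$ of the confocal caustic tangent to each of the lines $AB$ and $BC$. Uniqueness from the first paragraph then forces the two caustics to coincide.

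The main obstacle I anticipate is the configuration bookkeeping: the angle $\angle F_1 B F_2^{(1)}$ is either the sum $\angle F_1 B A + \angle A B F_2$ (when both foci lie on the same side of $AB$, i.e.\ in the ellipse-caustic case) or the absolute difference (in the hyperbola-caustic case), and similarly for $\angle F_2 B F_1^{(2)}$. Both configurations must be verified, but the symmetric form of the two angle identities makes the equality of distances automatic in each. An alternative, more computational route would be to work directly with the Joachimsthal-type tangency equation $(a^2 - \lambda)u^2 + (b^2 - \lambda)v^2 = w^2$ in the confocal pencil $\tfrac{x^2}{a^2 - \lambda} + \tfrac{y^2}{b^2 - \lambda} = 1$ and show that the parameter $\lambda$ is invariant under the reflection; this bypasses the case analysis but at the cost of obscuring the geometric content.
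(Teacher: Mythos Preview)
The paper does not actually prove Proposition~\ref{prop:ellipse.caustic}: it is stated in Section~\ref{sec:elliptical} as a well-known background result on elliptical billiards, without any accompanying proof. So there is no argument in the paper to compare against.

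Your proposal is correct and is essentially the classical synthetic proof. The reflection-of-focus construction in the first paragraph cleanly gives existence and uniqueness of the confocal tangent conic, and the key step---combining the billiard law and the first focal property at $B$ to deduce $\angle F_1BA=\angle F_2BC$ and $\angle F_2BA=\angle F_1BC$---is exactly right: both pairs $\{BA,BC\}$ and $\{BF_1,BF_2\}$ are symmetric about the normal to $\mathcal{E}$ at $B$, so the reflection in that normal carries one configuration to the other. The SAS argument then gives $|F_1F_2^{(1)}|=|F_2F_1^{(2)}|$, and since reflecting either focus across a line yields the same caustic parameter (both equal the extremal value of the focal sum or difference along the line), this finishes the invariance. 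Your caveat about the sum-versus-difference bookkeeping in the hyperbola case is accurate but, as you note, the symmetry of the two angle identities handles both configurations uniformly.
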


Proposition \ref{prop:ellipse.caustic} implies the following key property of elliptical billiards: each trajectory of the billiard within ellipse has \emph{a caustic} -- a curve such that each segment of the trajectory lies on a tangent line of that curve, see Figure \ref{fig:caustic}.
For elliptical billiards, caustics are ellipses  hyperbolas confocal with the boundary, including degenerate ones, which can be identified as horizontal and vertical axes of the ellipse $\mathcal{E}$.
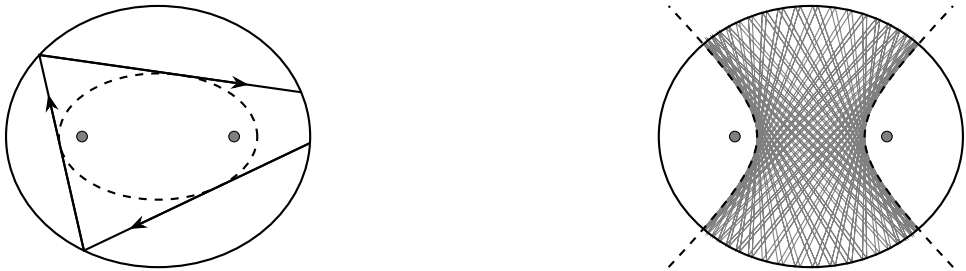
\begin{figure}[h]
	\begin{minipage}{0.5\textwidth}
		\centering
		\begin{tikzpicture}
			\tikzmath{\k1=0.8; \k2=1-\k1;}
			\coordinate (A) at (1.9975, -0.0865665);
			\coordinate (B) at (-0.975234, -1.51218);
			\coordinate (C) at (-1.5623, 1.08139);
			\coordinate (D) at (1.88146, 0.587441);	
			
			\coordinate (B1) at (-0.975234*\k1+1.9975*\k2, -1.51218*\k1-0.0865665*\k2);
			\coordinate (C1) at (-1.5623*\k1-0.975234*\k2, 1.08139*\k1-1.51218*\k2);
			\coordinate (D1) at (-1.5623*\k2+1.88146*\k1, 1.08139*\k2+0.587441*\k1);
			
			\draw[thick](A)--(B)--(C)--(D);
			\draw[thick,-Stealth](A)--(B1);
			\draw[thick,-Stealth](B)--(C1);
			\draw[thick,-Stealth](C)--(D1);

			\draw[thick] (0,0) ellipse (2 and 1.732);
			\draw[thick,dashed] ellipse (1.304 and 0.837);
			
			\draw[fill=gray] (1,0) circle (2pt);
			\draw[fill=gray] (-1,0) circle (2pt);
			
		\end{tikzpicture}
		
	\end{minipage}
	\begin{minipage}{0.5\textwidth}
		\centering
		\begin{tikzpicture}
			
			\draw[thin,gray](1.26491,1.34164)--(-0.207887,-1.72267)--(-1.07964,1.45801)--(1.40721,-1.23078)--(-0.824046,1.5782)--(-0.580162,-1.65758)--(1.36928,1.26246)--(-1.22149,-1.37149)--(0.0923642,1.7302)--(1.13967,-1.42333)--(-1.39652,1.23988)--(0.734538,-1.61101)--(0.679159,1.62913)--(-1.38794,-1.24708)--(1.17181,1.40362)--(0.0240942,-1.73193)--(-1.19312,1.39009)--(1.38085,-1.25298)--(-0.638906,1.6413)--(-0.772312,-1.5977)--(1.40153,1.23563)--(-1.11565,-1.43753)--(-0.140308,1.72778)--(1.24019,-1.35884)--(-1.36005,1.26992)--(0.537662,-1.66829)--(0.859174,1.56409)--(-1.41019,-1.22822)--(1.05286,1.47262)--(0.255106,-1.7179)--(-1.28109,1.33007)--(1.33393,-1.29053)--(-0.43148,1.69126)--(-0.939446,-1.52908)--(1.41401,1.22492)--(-0.983304,-1.50826)--(-0.367368,1.70258)--(1.31609,-1.3042)--(-1.30228,1.31455)--(0.321187,-1.70957)--(1.01297,1.49346)--(-1.41301,-1.22579)--(0.906968,1.54372)--(0.476066,-1.68227)--(-1.34542,1.28156)--(1.26486,-1.34168)--(-0.207742,1.72268)--(-1.07972,-1.45796)--(1.4072,1.23079)--(-0.823936,-1.57824)--(-0.580291,1.65754)--(1.36931,-1.26244)--(-1.22143,1.37153)--(0.0922171,-1.73021)--(1.13974,1.42329)--(-1.3965,-1.23989)--(0.734421,1.61105)--(0.679281,-1.62909)--(-1.38796,1.24707)--(1.17174,-1.40366)--(0.0242417,1.73192)--(-1.19318,-1.39005)--(1.38082,1.25299)--(-0.638781,-1.64133)--(-0.772426,1.59766)--(1.40155,-1.23562)--(-1.11558,1.43757)--(-0.140454,-1.72777)--(1.24024,1.35881)--(-1.36002,-1.26995)--(0.53753,1.66832)--(0.85928,-1.56404)--(-1.4102,1.22821)--(1.05277,-1.47267)--(0.25525,1.71789)--(-1.28114,-1.33004)--(1.33389,1.29056)--(-0.431343,-1.69129)--(-0.939544,1.52903)--(1.41401,-1.22492)--(-0.983211,1.5083)--(-0.367508,-1.70256)--(1.31613,1.30417)--(-1.30223,-1.31459)--(0.321045,1.70959)--(1.01306,-1.49341)--(-1.41301,1.22579)--(0.906867,-1.54376)--(0.476201,1.68224)--(-1.34545,-1.28153)--(1.26481,1.34171)--(-0.207597,-1.72269)--(-1.0798,1.45792)--(1.40718,-1.2308)--(-0.823827,1.57828)--(-0.58042,-1.65751)--(1.36933,1.26242)--(-1.22137,-1.37157)--(0.09207,1.73021)--(1.13981,-1.42324)--(-1.39648,1.23991)--(0.734304,-1.61109)--(0.679402,1.62905)--(-1.38798,-1.24705)--(1.17167,1.40371)--(0.0243893,-1.73192)--(-1.19325,1.39001)--(1.3808,-1.25301)--(-0.638656,1.64137)--(-0.77254,-1.59762)--(1.40156,1.2356)--(-1.1155,-1.43762)--(-0.140601,1.72777)--(1.2403,-1.35877)--(-1.35999,1.26997)--(0.537399,-1.66835)--(0.859385,1.564)--(-1.41021,-1.2282)--(1.05269,1.47271)--(0.255394,-1.71787)--(-1.28119,1.33001)--(1.33385,-1.29059)--(-0.431206,1.69131)--(-0.939641,-1.52899)--(1.41401,1.22492)--(-0.983119,-1.50835)--(-0.367648,1.70254)--(1.31617,-1.30414)--(-1.30219,1.31462)--(0.320903,-1.70961)--(1.01315,1.49337)--(-1.413,-1.22579)--(0.906767,1.5438)--(0.476336,-1.68221)--(-1.34549,1.2815)--(1.26476,-1.34175)--(-0.207452,1.72271)--(-1.07988,-1.45788)--(1.40717,1.23081)--(-0.823718,-1.57833)--(-0.580549,1.65747)--(1.36936,-1.26239)--(-1.22131,1.37161)--(0.0919229,-1.73022)--(1.13988,1.4232)--(-1.39647,-1.23992)--(0.734186,1.61113)--(0.679524,-1.62901)--(-1.388,1.24703)--(1.17161,-1.40375)--(0.0245368,1.73192)
			;
			
			\draw[thick] (0,0) ellipse (2 and 1.732);		
			\draw[fill=gray] (1,0) circle (2pt);
			\draw[fill=gray] (-1,0) circle (2pt);
			
			\draw[thick, dashed, domain=-1.732:1.731,smooth,variable=\y]
			plot ({0.7071*sqrt(1+2*\y*\y)},{\y});
			\draw[thick, dashed, domain=-1.732:1.731,smooth,variable=\y]
			plot ({-0.7071*sqrt(1+2*\y*\y)},{\y});
			
		\end{tikzpicture}
		
	\end{minipage}
	\caption{The caustics of billiard trajectories.}\label{fig:caustic}
\end{figure}

A direct consequence of Propositions \ref{prop:focal.property} and \ref{prop:ellipse.caustic} is the well-known second focal property of ellipses, illustrated in Figure \ref{fig:focal.property2}.

\begin{proposition} [Second focal property of ellipses]
	\label{prop:focal.property2}%
	Let $\mathcal E$ be an ellipse
	with foci $F_1,F_2$ and $A$ an arbitrary point outside ellipse $\mathcal E$. Denote by $t_1$ and $t_2$ the two tangents from $A$ to ellipse $\mathcal E$. Then the angle between $t_1$ and the
	segment $AF_1$ is equal to the angle between $AF_2$ and $t_2$.
\end{proposition}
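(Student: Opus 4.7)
The plan is to introduce an auxiliary confocal ellipse through $A$ and then derive the two required symmetries from properties already established in the paper. Since $A$ lies outside $\mathcal{E}$, I would let $\mathcal{E}_A$ be the unique ellipse sharing the foci $F_1, F_2$ with $\mathcal{E}$ and passing through $A$; because $|AF_1|+|AF_2|$ exceeds the major axis of $\mathcal{E}$, the ellipse $\mathcal{E}_A$ contains $\mathcal{E}$ strictly in its interior. Each of the lines $t_1, t_2$ is tangent to $\mathcal{E}$, hence does not meet its interior, hence cannot cross $F_1F_2$; both foci lie on the same side of each such tangent. This puts us in the first (elliptic) case of Proposition~\ref{prop:ellipse.caustic}, applied with $\mathcal{E}_A$ as boundary and $\mathcal{E}$ as confocal caustic.

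Next, I would produce two reflection-symmetric pairs of rays at $A$. Consider the billiard trajectory inside $\mathcal{E}_A$ whose segment leaving $A$ lies along $t_1$; that segment is tangent to $\mathcal{E}$, and by Proposition~\ref{prop:ellipse.caustic} (read in the time-reversed direction) the segment arriving at $A$ from the previous bounce is also tangent to $\mathcal{E}$. From the external point $A$ there are exactly two tangents to $\mathcal{E}$, so this incoming segment must lie along $t_2$. Hence $t_1$ and $t_2$ are consecutive segments of a trajectory at $A$, satisfying the billiard reflection law off $\mathcal{E}_A$; equivalently, they are symmetric with respect to the normal $n$ to $\mathcal{E}_A$ at $A$. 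On the other hand, Proposition~\ref{prop:focal.property} applied at the same point $A \in \mathcal{E}_A$ says that $AF_1$ and $AF_2$ are also symmetric with respect to the same normal $n$. Reflection across $n$ then swaps $t_1 \leftrightarrow t_2$ and $AF_1 \leftrightarrow AF_2$, whence
\[
\angle(t_1, AF_1) = \angle(t_2, AF_2) = \angle(AF_2, t_2),
\]
which is the required equality.

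The only delicate point in this plan is the caustic-preservation step at $A$: Proposition~\ref{prop:ellipse.caustic} is phrased as an assertion about the segment \emph{following} a bounce, whereas here I invoke it for the segment preceding a bounce at $A$, and I further need it in the form that any two chords of $\mathcal{E}_A$ through $A$ both tangent to $\mathcal{E}$ must be billiard reflections of one another at $A$. Both steps are immediate from the time-reversibility of billiard dynamics and the uniqueness of the pair of tangents from an external point to a conic, but should be stated explicitly. Once this is in hand, the remainder is a single-line angle chase and no further obstacle is anticipated.
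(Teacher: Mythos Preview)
Your argument is correct and is precisely the approach the paper has in mind: the paper does not give a separate proof of this proposition but simply states that it is ``a direct consequence of Propositions~\ref{prop:focal.property} and~\ref{prop:ellipse.caustic}'', and your write-up is a faithful unpacking of that sentence via the auxiliary confocal ellipse through $A$.
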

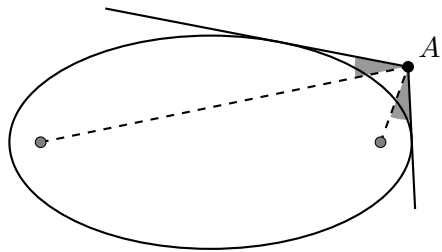
\begin{figure}[h]
	\begin{center}
		\begin{tikzpicture}
			\tikzmath{\koef=4;}
			\coordinate (T2) at (2.598+\koef*0.25,1-\koef*2.598/9);
			\coordinate (T1) at (2.598-\koef*0.25,1+\koef*2.598/9);
			
			\draw[white] (2.235,0) coordinate (F2) -- (2.598,1) coordinate (A) --  (2.68967, -0.885864) coordinate (B)
			pic [fill=black!40, angle radius=20pt] {angle = F2--A--B};
			
			\draw[white] (-2.235,0) coordinate (F1) -- (A) -- (-1.38561, 1.77389) coordinate (C)
			pic [fill=black!40, angle radius=20pt] {angle = C--A--F1};
			
			\draw[thick,dashed](F1)--(A)--(F2);
			\draw[thick](B)--(A)--(C);

			\draw[thick] (0,0) ellipse (2.646 and 1.414);
			\draw[black, fill=gray] 	(F2) circle (2pt);
			\draw[black, fill=gray] 	(F1) circle (2pt);
			\draw[black, fill=black] 	(A) circle (2pt) node[above right] {$A$};

		\end{tikzpicture}
		\caption{Second focal property of
			ellipses}\label{fig:focal.property2}
	\end{center}
\end{figure}

In the next lemma, we note that the type of the caustic may be
determined by the period of a trajectory.
\begin{lemma}\label{lemma:hyperbola} A periodic trajectory of the elliptic billiard with odd period has an ellipse as the caustic.
\end{lemma}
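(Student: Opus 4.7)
The plan is to proceed by contradiction. Assume the caustic of the given odd-period trajectory is a hyperbola $\mathcal{H}$ confocal with the boundary ellipse $\mathcal{E}$. By Proposition~\ref{prop:ellipse.caustic}, the tangency of each chord to $\mathcal{H}$ forces that chord to strictly cross the segment $F_1F_2$ between the two foci of $\mathcal{E}$.

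Choose coordinates so that the foci lie on the $x$-axis. The crossing condition then places the two endpoints of every chord in opposite open half-planes $y>0$ and $y<0$; in particular no vertex $V_i$ can lie on the $x$-axis, since such a vertex would necessarily be $(\pm a,0)$, and the incident chords would meet the $x$-axis only at this point, which sits outside $F_1F_2$. Writing the vertices as $V_0,V_1,\ldots,V_{n-1}$ and setting $\sigma_i:=\operatorname{sign}(y(V_i))\in\{+1,-1\}$, we get $\sigma_{i+1}=-\sigma_i$ for every $i$, so by induction $\sigma_n=(-1)^n\sigma_0$. Closure of the orbit, $V_n=V_0$, gives $\sigma_n=\sigma_0$, which forces $(-1)^n=1$, i.e.~$n$ is even — the desired contradiction. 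Applying Proposition~\ref{prop:ellipse.caustic} once more, the caustic must therefore be a (confocal) ellipse.

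To complete the argument I should also rule out the two degenerate members of the confocal family appearing as the caustic: the segment $F_1F_2$ itself, corresponding to trajectories through a focus (well-known to be non-periodic), and the minor axis of $\mathcal{E}$, which supports only the two-periodic vertical orbit. Neither is compatible with odd period, so the caustic is genuinely a non-degenerate confocal ellipse. The main obstacle in carrying the proof out is the geometric statement that tangency to a confocal hyperbola forces a chord to cross $F_1F_2$: this is part of Proposition~\ref{prop:ellipse.caustic} and may simply be quoted, but if one wished to verify it independently, it could be done by working in the confocal family $\tfrac{x^2}{a^2+\lambda}+\tfrac{y^2}{b^2+\lambda}=1$ with $-a^2<\lambda<-b^2$, or by invoking the external reflection property of the hyperbola at the tangency point; once this is in hand, the alternation argument above is completely elementary.
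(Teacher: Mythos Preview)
Your proof is correct and follows essentially the same approach as the paper: both argue that if the caustic were a hyperbola then every segment would cross the focal segment $F_1F_2$, and a parity (sign-alternation) argument then forces the period to be even. Your version is more careful than the paper's---you explicitly handle vertices on the major axis and rule out the degenerate caustics---but the core idea is identical.
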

\begin{proof} Suppose that the caustic of a given trajectory is a hyperbola. Denote by $F_1, F_2$ the focal points of the boundary ellipse. Then every segment of the billiard trajectory intersects the segment $F_1F_2$. Thus, for a periodic trajectory there will be an even number of intersections of the trajectory with $F_1F_2$. Consequently, the period of a closed trajectory with hyperbola as caustic must be even.
\end{proof}

\subsection{Pascal's theorem}
We review here the classical Pascal's theorem.
For more details, see, for example \cite{BergerGeomII} or \cite{DragRadn2011book}.

\begin{theorem}[Pascal's theorem ]\label{th:pascal}
Let a non-degenerate conic be given, and $M$, $N$, $O$, $P$, $Q$, $R$ six points on that conic. 
Then the intersection points of lines $MN$ and $PQ$, $NO$ and $QR$, $OP$ and $RM$ are collinear.
(See Figure \ref{fig:pascal}).
\end{theorem}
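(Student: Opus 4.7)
The natural plan is to reduce Pascal's theorem to Menelaus's theorem (Theorem \ref{th:menelaus}), in the spirit of the other elementary arguments used in this paper. First, since the statement involves only incidences of lines with a non-degenerate conic, and since any two non-degenerate conics in the projective plane are projectively equivalent, it suffices to prove the claim when the conic is a circle: the six intersection points of pairs of lines remain collinear after any projective transformation.

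Label the six points $M,N,O,P,Q,R$ in cyclic order on the circle, and set $X=MN\cap PQ$, $Y=NO\cap QR$, $Z=OP\cap RM$. I would then form the auxiliary triangle $\triangle ABC$ cut out by three alternate sides of the hexagon,
\[
A=MN\cap OP,\qquad B=OP\cap QR,\qquad C=QR\cap MN,
\]
so that the remaining three sides $NO$, $PQ$ and $RM$ become transversals of $\triangle ABC$, each meeting all three of its sidelines, with one of $X,Y,Z$ appearing as an intersection in each case. Applying Menelaus's theorem to $\triangle ABC$ once for each transversal produces three signed-ratio identities whose product equals $(-1)^3=-1$.

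The decisive step is then the power of a point theorem applied at each vertex of $\triangle ABC$: at $A$, the two chords of the circle through $A$ meet it at $\{M,N\}$ and $\{O,P\}$, giving (with appropriate signs) $AM\cdot AN = AO\cdot AP$, and analogously at $B$ and $C$. These three identities force the extraneous factors appearing in the product of the three Menelaus relations to cancel, leaving exactly
\[
\frac{AZ}{ZB}\cdot\frac{BY}{YC}\cdot\frac{CX}{XA}=-1,
\]
and the converse direction of Menelaus's theorem immediately yields the collinearity of $X$, $Y$ and $Z$.

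The main obstacle I anticipate is careful bookkeeping of signs: the three Menelaus identities and the three power-of-a-point relations must be written with consistent conventions so that the product truly collapses to $-1$ and not to $+1$. A secondary, more minor, issue is the handling of degenerate configurations in which some of the lines $MN$, $OP$, $QR$ are parallel (so one of $A,B,C$ goes to infinity) or some of the points $X,Y,Z$ coincide; these are best treated by sending a suitable line to infinity via a projective transformation, or by a limiting continuity argument once the generic case has been settled.
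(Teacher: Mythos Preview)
The paper does not actually prove Pascal's theorem: it is stated as a classical result in Section~\ref{sec:conics} with a reference to \cite{BergerGeomII} and \cite{DragRadn2011book} for details, and no argument is supplied. So there is no ``paper's own proof'' to compare against.

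Your proposal is a correct and standard proof of Pascal's theorem. Reducing to a circle by projective equivalence, forming the auxiliary triangle from three alternate sidelines, applying Menelaus to each of the remaining three sidelines as transversals, and then cancelling via the power-of-a-point relations at the triangle's vertices is exactly the classical elementary argument. Your identification of the sign bookkeeping as the delicate point is apt, and your remark about handling parallel lines or coincident intersection points by a projective change of coordinates (or a continuity argument) is the right way to dispose of the degenerate configurations. In short, you have supplied a complete proof where the paper supplies none, and your choice of Menelaus as the engine is well aligned with the paper's toolkit, since Menelaus is stated and used elsewhere in Section~\ref{sec:ceva}.
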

\begin{figure}[h]
	\begin{center}
		\begin{tikzpicture}
			\coordinate(M) at (2.23607,-2);
			\coordinate(N) at (-1,0);
			\coordinate(O) at (1.41421,1);
			\coordinate(P) at (-1.80278,-1.5);
			\coordinate(R) at (-2.69258,2.5);
			\coordinate(Q) at (1.22066,-0.7);
			
			\coordinate(p1) at (0.0831955, -0.0343633);
			\coordinate(p2) at (0.4588, -0.901588);
			\coordinate(p3) at (-0.0941886, 0.3752);
			
			\draw[very thick,black](p2)--(p3);
			
			\draw[thin,gray](M)--(N)--(O)--(P)--(Q)--(R)--cycle;

					\draw [black, very thick, 
			domain=-3:3, samples=40] 
			plot ({sqrt(1+\x*\x)}, {\x} );
			
			\draw [black, very thick,   domain=-3:3, samples=40] 
			plot ({-1*sqrt(1+\x*\x)}, {\x} );

\draw[fill=black](M) circle (2pt) node[right]{$M$};
\draw[fill=black](O) circle (2pt) node[right]{$O$};
\draw[fill=black](Q) circle (2pt) node[right]{$Q$};

\draw[fill=black](N) circle (2pt) node[left]{$N$};
\draw[fill=black](P) circle (2pt) node[left]{$P$};
\draw[fill=black](R) circle (2pt) node[below left]{$R$};

\draw[fill=black,black](p1) circle (2pt);
\draw[fill=black,black](p2) circle (2pt);
\draw[fill=black,black](p3) circle (2pt);

		\end{tikzpicture}
		\caption{Pascal's theorem.}\label{fig:pascal}
	\end{center}
\end{figure}
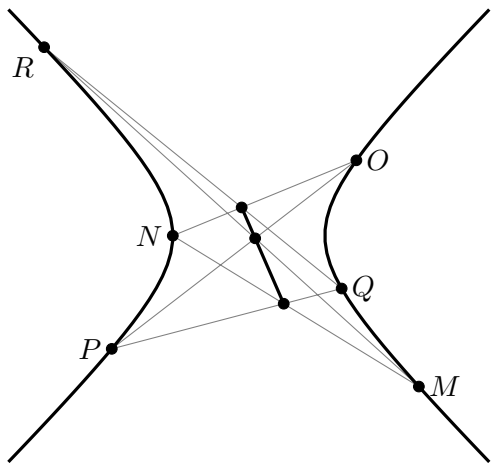

Pascal's theorem can be also applied in the cases when some of the six given points on the conic coincide.
In such cases, the line containing two coinciding points is the tangent line to the conic at that point.
Two instances of such limit cases of Pascal's theorem are provided below.

\begin{corollary}\label{cor:pascal1}
Let a non-degenerate conic be given together with its four
points $M$, $N$, $P$, $Q$.
The intersection points of the tangents to the conic
at $M$ and $P$ is collinear with the points of intersection of the pairs of lines $MN$ and $PQ$, $NP$ and $MQ$.
(See Figure \ref{fig:pascal2}).
\end{corollary}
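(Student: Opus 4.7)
The plan is to obtain this statement as a direct limiting instance of Pascal's theorem (Theorem \ref{th:pascal}) applied to a degenerate hexagon in which two pairs of adjacent vertices coincide. Concretely, I would consider the hexagon with ordered vertices
\[
 M,\ M,\ N,\ P,\ P,\ Q
\]
inscribed in the given conic. Recall the standard interpretation of Pascal's theorem for such degenerate hexagons: whenever two consecutive vertices coincide, the side joining them is to be read as the tangent line to the conic at the common point. Thus the six ``sides'' of the hexagon $MMNPPQ$ are the tangent at $M$, the chord $MN$, the chord $NP$, the tangent at $P$, the chord $PQ$, and the chord $QM$.

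Next I would read off the three pairs of opposite sides and the resulting intersection points. The first and fourth sides are the tangents at $M$ and $P$, and they meet at a single point; the second and fifth sides are $MN$ and $PQ$, and their intersection is the second point named in the statement; the third and sixth sides are $NP$ and $MQ$, and their intersection is the third point. By Pascal's theorem these three intersection points lie on a common line, which is precisely the conclusion.

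The only genuine issue to dispatch is the legitimacy of the degenerate version of Pascal's theorem, i.e.\ that when two adjacent vertices coalesce the corresponding side of the hexagon is correctly replaced by the tangent line. This is a standard continuity/limit argument: one takes a one-parameter family of genuine hexagons inscribed in the conic, with two pairs of vertices approaching $M$ and $P$ respectively, applies Pascal's theorem to each hexagon in the family, and passes to the limit using the fact that the secant through two points of a conic tends to the tangent at the limiting point. Since this limiting principle is used throughout the classical projective geometry of conics (and can be regarded as already implicit in the formulation given after Theorem \ref{th:pascal}), invoking it here suffices to complete the proof.
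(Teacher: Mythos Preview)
Your proposal is correct and follows exactly the approach indicated in the paper: the paper does not give a detailed proof of this corollary but simply presents it as one of the ``limit cases of Pascal's theorem'' in which coinciding adjacent vertices are joined by the tangent line. Your choice of degenerate hexagon $MMNPPQ$ and identification of the three pairs of opposite sides is precisely the intended specialization.
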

\begin{figure}[h]
	\begin{center}
		\begin{tikzpicture}
			\coordinate(M) at (2.23607,-2);
			\coordinate(N) at (-1,0);
			\coordinate(P) at (-1.80278,-1.5);
			\coordinate(Q) at (1.22066,-0.7);
			
			\coordinate(p1) at (-0.319404, 1.2717);
			\coordinate(p2) at (-0.0718451, 0.58033);
			\coordinate(p3) at (0.4588, -0.901588);
			
			\draw[very thick,black](p1)--(p3);
			
			\draw[thin,gray](M)--(N);
			\draw[thin,gray](P)--(Q);
			\draw[thin,gray](P)--(p1)--(M)--(p2)--cycle;

			\draw [black, very thick, 
			domain=-3:3, samples=40] 
			plot ({sqrt(1+\x*\x)}, {\x} );
			
			\draw [black, very thick,   domain=-3:3, samples=40] 
			plot ({-1*sqrt(1+\x*\x)}, {\x} );

			\draw[fill=black](M) circle (2pt) node[right]{$M$};
			\draw[fill=black](Q) circle (2pt) node[right]{$Q$};

			\draw[fill=black](N) circle (2pt) node[left]{$N$};
			\draw[fill=black](P) circle (2pt) node[left]{$P$};
			
			\draw[fill=black,black](p1) circle (2pt);
		\draw[fill=black,black](p2) circle (2pt);
			\draw[fill=black,black](p3) circle (2pt);

		\end{tikzpicture}
		\caption{Pascal's theorem: a variant.}\label{fig:pascal2}
	\end{center}
\end{figure}
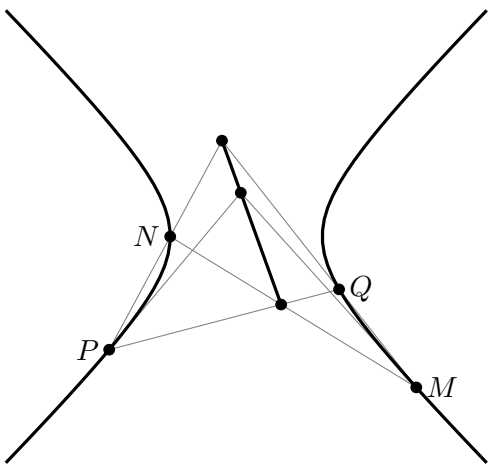

\begin{corollary}\label{cor:simpson}
Let $M$, $N$, $P$ be three points on a non-degenerate conic. Then the intersections of the tangent lines to the conic at $M$, $N$, $P$ with the opposite sides of the triangle $MNP$ are collinear. (See Figure \ref{fig:simpson}).
\end{corollary}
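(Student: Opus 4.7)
The plan is to obtain this as a direct limit case of Pascal's Theorem (Theorem \ref{th:pascal}), applied to a ``degenerate hexagon'' in which the six inscribed points collapse pairwise to the three given points $M$, $N$, $P$. This is exactly the style of limit case alluded to in the paragraph preceding Corollary \ref{cor:pascal1}, where a chord joining two coinciding points is interpreted as the tangent line to the conic at that point.

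Concretely, I would apply Pascal's Theorem to the inscribed hexagon with the ordered vertex list $M, M, N, N, P, P$. Reading the six sides of this hexagon in order, the sides $MM$, $NN$, $PP$ are replaced by the tangent lines $t_M$, $t_N$, $t_P$ to the conic at the respective points, while the remaining three sides are the ordinary chords $MN$, $NP$, $PM$. Pascal's Theorem asserts that the three intersection points of opposite sides are collinear, so I would identify these opposite pairs explicitly: $t_M$ is opposite to $NP$, $t_N$ is opposite to $PM$, and $t_P$ is opposite to $MN$. Each of these is exactly the intersection of a tangent at a vertex of the triangle $MNP$ with the side of $MNP$ opposite to that vertex. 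Collinearity of the three intersections then follows immediately.

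The step that requires genuine justification, rather than formal bookkeeping, is the interpretation of the ``degenerate chord'' joining two coinciding points as the tangent line. I would justify this by a short continuity argument: if $M'$ is a variable point on the conic approaching $M$, the chord $MM'$ has a well-defined limit, namely the tangent line $t_M$, and the intersection of $MM'$ with any fixed line varies continuously. Applying Pascal's Theorem to the nondegenerate hexagon $M, M', N, N', P, P'$ with $M' \to M$, $N' \to N$, $P' \to P$ and passing to the limit yields the statement. I expect this continuity step, rather than any combinatorial identification of opposite sides, to be the only subtle point; once it is granted, the corollary is immediate.
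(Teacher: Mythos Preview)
Your proof is correct and follows exactly the approach indicated in the paper: the paper presents Corollary~\ref{cor:simpson} as a limit case of Pascal's Theorem with pairwise coinciding points, without giving further detail, and your degenerate hexagon $M,M,N,N,P,P$ with the continuity justification is precisely the intended argument. Your identification of the opposite pairs is also correct.
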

\begin{figure}[h]
	\begin{center}
		\begin{tikzpicture}
			\coordinate(M) at (2.23607,-2);
			\coordinate(N) at (-1,0);
			\coordinate(P) at (-1.80278,-1.5);
			
			\coordinate(p1) at (-0.705929, -0.181746);
			\coordinate(p2) at (-0.458223, 1.01231);
			\coordinate(p3) at (-1, -1.59938);
			
			\draw[very thick,gray,fill=gray!30](M)--(N)--(P)--cycle;
			
			\draw[very thick,black](p2)--(p3);
			
			\draw[gray](N)--(p3);
			\draw[gray](P)--(p1);
			\draw[gray](M)--(p2)--(N);

			\draw [black, very thick, 
			domain=-3:3, samples=40] 
			plot ({sqrt(1+\x*\x)}, {\x} );
			
			\draw [black, very thick,   domain=-3:3, samples=40] 
			plot ({-1*sqrt(1+\x*\x)}, {\x} );

			\draw[fill=black](M) circle (2pt) node[right]{$M$};

			\draw[fill=black](N) circle (2pt) node[left]{$N$};
			\draw[fill=black](P) circle (2pt) node[left]{$P$};
			
			\draw[fill=black,black](p1) circle (2pt);
			\draw[fill=black,black](p2) circle (2pt);
			\draw[fill=black,black](p3) circle (2pt);

		\end{tikzpicture}
		\caption{Corollary \ref{cor:simpson}.}\label{fig:simpson}
	\end{center}
\end{figure}

The classical Pappus theorem is a version of Pascal's theorem for degenerate conics.

\section{The Siebeck--Marden theorem}\label{sec:marden}

Marden's theorem is one of the fundamental results in geometric theory of polynomials and
rational functions.
That theorem has a long history which is well described in the famous Marden book
\cite{MardenPOLY}.
The earliest version of this theorem, up to our best knowledge, goes back to 1864 when Siebeck (see \cite{Si1864})
formulated and proved it for the case of polynomials with simple
roots.
Here we consider only the case $n=3$.

\begin{theorem}[Siebeck, \cite{Si1864}, $n=3$]\label{th:Siebeck}
Let $P(z)$ be a polynomial of degree $3$
with complex coefficients, such that its zeros
$\alpha_1$, $\alpha_2$, $\alpha_3$ are simple and noncollinear.
Then there exists an ellipse $\mathcal E$ tangent to every
line segment $[\alpha_i, \alpha_j]$ at the midpoint, see Figure \ref{fig:siebeck}.
Moreover, the foci of the
curve $\mathcal E$ are zeros of the derivative polynomial
$P'(z)$.
\end{theorem}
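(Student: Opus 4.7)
The plan is to split the argument into two stages: first establish the existence of an inscribed ellipse $\mathcal E$ tangent to the three sides of the triangle $\alpha_1\alpha_2\alpha_3$ at their midpoints (the classical Steiner inellipse), then identify its foci with the zeros $\beta_1,\beta_2$ of $P'(z)$.

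For existence I would use an affine reduction. Real affine transformations of the plane preserve ellipses, tangencies, and midpoints of segments, and every nondegenerate triangle is affinely equivalent to an equilateral one, for which the incircle is tangent to each side at its midpoint. Pulling back yields the desired $\mathcal E$; uniqueness follows from a dimension count, since tangency at a prescribed point on each of three lines imposes six conditions on the five-parameter family of plane conics. An important byproduct is that $\mathcal E$ is centered at the centroid $G=(\alpha_1+\alpha_2+\alpha_3)/3$, because the equilateral incircle is centered at the centroid and affine maps preserve centers of conics.

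For the focus identification I would use Vieta's formulas together with the focal reflection law. Writing $\sigma_1=\alpha_1+\alpha_2+\alpha_3$ and $\sigma_2=\alpha_1\alpha_2+\alpha_2\alpha_3+\alpha_3\alpha_1$, the derivative polynomial factors as $\tfrac13 P'(z)=z^2-\tfrac{2\sigma_1}{3}z+\tfrac{\sigma_2}{3}$, so Vieta gives $\beta_1+\beta_2=\tfrac{2\sigma_1}{3}$, placing the midpoint of $[\beta_1,\beta_2]$ at the centroid $G$, which is the center of $\mathcal E$. A short expansion then shows that at the midpoint $M=(\alpha_i+\alpha_j)/2$ of an arbitrary side,
\[
(\beta_1-M)(\beta_2-M)=-\tfrac{(\alpha_i-\alpha_j)^2}{12},
\]
a negative real multiple of $(\alpha_i-\alpha_j)^2$. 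Geometrically, this is exactly the focal reflection condition at $M$ for the pair $\beta_1,\beta_2$: the two complex directions $M\beta_1$ and $M\beta_2$ are mirror images of each other in the side line $\alpha_i\alpha_j$. The true foci $F_1,F_2$ of $\mathcal E$ automatically satisfy the same reflection relation at each midpoint, since $\alpha_i\alpha_j$ is the tangent to $\mathcal E$ at $M$. Subtracting and using $F_1+F_2=\beta_1+\beta_2$, I would conclude that $F_1F_2-\beta_1\beta_2$ is a real multiple of each of the three complex numbers $(\alpha_i-\alpha_j)^2$.

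The crux, and the step that requires the most care, is to infer from this that $F_1F_2=\beta_1\beta_2$. If the common difference were nonzero, then the three squared side-directions would all lie on a single real line through the origin of $\mathbb C$; this forces the angles at two distinct vertices of the triangle to be right angles, which is impossible in a genuine triangle. Hence $F_1F_2=\beta_1\beta_2$, and coupled with the equal sums this gives $\{F_1,F_2\}=\{\beta_1,\beta_2\}$, completing the identification.
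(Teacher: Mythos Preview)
The paper does not supply its own proof of Theorem~\ref{th:Siebeck}; it is quoted as a classical background result, with references to Siebeck's original paper and to Kalman's elementary argument, which the authors note is built on the focal properties recorded as Propositions~\ref{prop:focal.property} and~\ref{prop:ellipse.caustic}. Your proposal therefore stands on its own rather than competing with a proof in the paper, and it does stand.

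The key identity $(\beta_1-M)(\beta_2-M)=-\tfrac{1}{12}(\alpha_i-\alpha_j)^2$ is precisely $\tfrac13 P'(M)$ evaluated at a midpoint, and your reading of ``$uv/w^{2}$ is a negative real'' as the equal-angle condition is the complex-number form of Proposition~\ref{prop:focal.property}. The closing step is also sound: if $F_1F_2-\beta_1\beta_2$ were nonzero, the three squared side-directions $(\alpha_i-\alpha_j)^2$ would all be real multiples of one fixed complex number; since the three side-vectors sum to zero, this forces two of them to be parallel and hence the vertices to be collinear (your ``two right angles'' phrasing reaches the same contradiction). One small point worth tightening: the sentence about uniqueness ``from a dimension count, since tangency at a prescribed point on each of three lines imposes six conditions on the five-parameter family of plane conics'' is not itself a uniqueness argument---six conditions on five parameters is overdetermined and a priori could have no solution or many. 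Uniqueness is already implicit in your affine reduction (the incircle of the equilateral triangle is unique, and an affine map is a bijection on conics), so the gap is cosmetic rather than substantive.
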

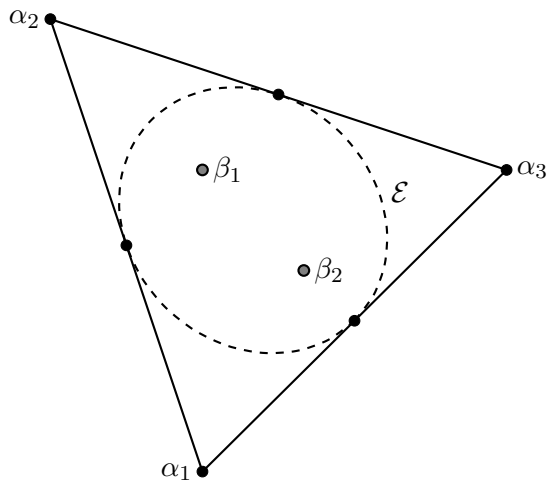
\begin{figure}[h]
\begin{center}
\begin{tikzpicture}
\draw[thick](2,0)--(0,6)--(6,4)--cycle;
\draw[thick,fill=gray] (2,4) circle (2pt) node[right]{$\beta_1$};
\draw[thick,fill=gray] (10/3,8/3) circle (2pt) node[right]{$\beta_2$};
\draw[fill=black](2,0) circle (2pt) node[left]{$\alpha_1$};
\draw[fill=black](0,6) circle (2pt) node[left]{$\alpha_2$};
\draw[fill=black](6,4) circle (2pt) node[right]{$\alpha_3$};
\draw[fill=black](1,3) circle (2pt);
\draw[fill=black](4,2) circle (2pt);
\draw[fill=black](3,5) circle (2pt);

\draw[thick, dashed, domain=0:360,smooth,variable=\t]
plot ({8/3+1.88562*0.7071*cos(\t)+1.63299*0.7071*sin(\t)},
{10/3-1.88562*0.7071*cos(\t)+1.63299*0.7071*sin(\t)});

\node at (4.6,3.7){$\mathcal{E}$};

\end{tikzpicture}
\caption{Siebeck theorem. If $\alpha_1$, $\alpha_2$, $\alpha_3$ are zeros of a cubic polynomial $P(z)$ and $\beta_1$, $\beta_2$ zeros of its derivative $P'(z)$, then there is an ellipse with foci $\beta_1$, $\beta_2$ inscribed in triangle with vertices $\alpha_1$, $\alpha_2$, $\alpha_3$. Moreover, the ellipse is touching the sides at midpoints.}\label{fig:siebeck}
\end{center}
\end{figure}

We note that the ellipse which touches the midpoints of the triangle sides  is 
\emph{the Steiner ellipse}, i.e.~the ellipse
of the maximal area inscribed in that triangle. It is interesting to note that the ratio of areas of any triangle and its Steiner ellipse equals $3\sqrt{3}/{(4\pi)}$, see e.g.~\cite{GSO}.

 The nontrivial and interesting result of Siebeck still attracts a lot of attention, see for example \cites{Kal2008, Drag2011}. In \cite{Pra}, this theorem is attributed to van den Berg, and two proofs are presented: the original proof of van der Berg from 1888 \cite{Berg} and another one from \cite{Scho}.
We note that the main points of the proof of Siebeck theorem for $n=3$
from \cite{Kal2008} are based on  Propositions
\ref{prop:focal.property} and
\ref{prop:ellipse.caustic}, which indicates a deep relationship with elliptic billiards.
We are going to exploit that connection in next Section \ref{sec:3periodic}.

Siebeck's theorem can be extended to the cases with non-simple roots, namely to the functions of the form:
\begin{equation}\label{eq:polynomial}
P(z)=(z-\alpha_1)^{m_1} (z-\alpha_2)^{m_2}(z-\alpha_3)^{m_3}.
\end{equation}
Each zero of the derivative $P'(z)$ is then either equal to some $\alpha_i$ such that $m_i>1$ or it is a zero of the logarithmic derivative of $P$:
\begin{equation}\label{eq:function0}
F(z)=(\log P(z))'=\frac{m_1}{z-\alpha_1}+\frac{m_2}{z-\alpha_2} +\frac{m_3}{z-\alpha_3}.
\end{equation}
Notice that $F(z)$ has two zeros.
The following statement then holds, see also Figure \ref{fig:marden}.

\begin{theorem}[\cite{MardenPOLY}*{Theorem 4.2 for $n=3$}]\label{th:marden}
Consider a function $P(z)$ of the form \eqref{eq:polynomial}, such that $\alpha_1$, $\alpha_2$, $\alpha_3$ are noncollinear complex numbers, and $m_1$, $m_2$, $m_3$ non-zero real constants.
Let $\beta_1$, $\beta_2$ are the zeros of the logarithmic derivative \eqref{eq:function0} of $P(z)$.
Then there is a conic with foci $\beta_1$, $\beta_2$ touching each line of the segment $[\alpha_i, \alpha_j]$
in a point dividing that segment in the ratio $m_i:m_j$.
\end{theorem}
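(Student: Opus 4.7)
I would generalize the billiard-flavored argument underlying the Siebeck Theorem~\ref{th:Siebeck} (the case $m_1=m_2=m_3=1$) by splitting the proof into two halves: first, use Ceva's theorem to produce an inscribed conic whose tangency points are exactly those prescribed; second, use the focal reflection property (Proposition~\ref{prop:focal.property}) to identify $\beta_1$ and $\beta_2$ as its foci. Throughout, let $P_{ij}:=\frac{m_j\alpha_i+m_i\alpha_j}{m_i+m_j}$ denote the point on line $\alpha_i\alpha_j$ dividing in the signed ratio $m_i:m_j$; when some $m_i$ are negative this is an external division and the conic will turn out to be a hyperbola.

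\emph{Construction of the conic.} The product of signed ratios associated with $P_{12},P_{23},P_{31}$ equals
\[
\frac{m_1}{m_2}\cdot\frac{m_2}{m_3}\cdot\frac{m_3}{m_1}=1,
\]
so Ceva's Theorem~\ref{thm:ceva} yields that the three cevians $\alpha_k P_{ij}$ either concur or are mutually parallel. This is the classical criterion (dual to the statement that five tangent lines determine a conic) for the $P_{ij}$ to be the tangency points of a unique nondegenerate conic $\mathcal C$ inscribed in the triangle $\alpha_1\alpha_2\alpha_3$. This pins down the candidate conic without yet using $F(z)$.

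\emph{Identification of the foci.} By Proposition~\ref{prop:focal.property}, a pair $F_1,F_2$ are the foci of $\mathcal C$ iff, at every tangency point, the two focal lines reflect into each other across the tangent line; equivalently, the reflection of $F_1$ across each side $\alpha_i\alpha_j$ is collinear with $P_{ij}$ and $F_2$. Using the complex reflection formula $\sigma_{ab}(z)=a+\tfrac{b-a}{\overline{b-a}}\overline{(z-a)}$, the task reduces to showing that the ratio
\[
\frac{\sigma_{\alpha_i\alpha_j}(\beta_1)-P_{ij}}{\beta_2-P_{ij}}
\]
is a positive real number for each pair $(i,j)$. By the symmetry of the construction under cyclic permutation of the indices, it suffices to verify this for one side, say $\alpha_1\alpha_2$; the other two cases follow by relabeling.

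\emph{The main obstacle: the algebraic identity.} Clearing denominators in $F(z)=0$ yields the quadratic $\big(\sum_i m_i\big)z^2-\big(\sum_i m_i(\alpha_j+\alpha_k)\big)z+\sum_i m_i\alpha_j\alpha_k=0$ (with $\{i,j,k\}=\{1,2,3\}$), so Vieta gives
\[
\beta_1+\beta_2=\frac{\sum_i m_i(\alpha_j+\alpha_k)}{m_1+m_2+m_3},\qquad \beta_1\beta_2=\frac{\sum_i m_i\alpha_j\alpha_k}{m_1+m_2+m_3}.
\]
The heart of the proof is then the careful manipulation that uses these Vieta relations together with the individual equations $F(\beta_1)=F(\beta_2)=0$ to show that the displayed reflection ratio above is its own complex conjugate. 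My approach is to expand $\sigma_{\alpha_1\alpha_2}(\beta_1)-P_{12}$ via the reflection formula, eliminate the explicit $\alpha_3$-dependence by substituting $m_3/(\beta_1-\alpha_3)=-m_1/(\beta_1-\alpha_1)-m_2/(\beta_1-\alpha_2)$ from $F(\beta_1)=0$, and symmetrically in $\beta_2$. After cancellation, the ratio should collapse to a manifestly real quantity; this is the step I expect to be most delicate, since the algebra is deceptively heavy and the cleanest form of the simplification is not obvious a priori. Once carried out for the side $\alpha_1\alpha_2$, the conclusion transfers to the other two sides by symmetry, and the theorem follows.
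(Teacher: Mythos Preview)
The paper does not supply its own proof of Theorem~\ref{th:marden}; the result is quoted from \cite{MardenPOLY} as an external tool and used without argument, so there is no in-paper proof to compare your proposal against.

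Evaluating your proposal on its own merits: the overall architecture is reasonable and close in spirit to Kalman's argument \cite{Kal2008} alluded to in the paper. Two substantive remarks, however. First, the passage from the Ceva identity to ``there exists a unique inscribed conic with exactly these tangency points'' is itself a nontrivial classical fact (essentially the converse direction of Theorem~\ref{thm:ellipseconc}, or a Brianchon/duality argument); you invoke it as a black box, and in a self-contained proof it needs either a reference or a short justification. Second, and more seriously, you explicitly leave the ``main obstacle'' --- the verification that the reflection ratio is real --- undone. That computation is not a detail to be filled in later; it is where the entire content of the theorem lives, and it does not simplify by itself without a well-chosen normalization or substitution. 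Until that identity is actually carried out, what you have is a plausible plan rather than a proof. (A small correction along the way: for an ellipse the tangency point $P_{ij}$ lies \emph{between} the reflected focus $\sigma_{\alpha_i\alpha_j}(\beta_1)$ and the other focus $\beta_2$, so the ratio you display should come out \emph{negative} real, not positive.)
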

\begin{figure}[h]
	\begin{center}
		\begin{tikzpicture}
			\draw[thick](2,0)--(0,6)--(6,4)--cycle;
			\draw[thick,fill=gray] (2.97165, 4.35634) circle (2pt) node[below right]{$\beta_1$};
			\draw[thick,fill=gray] (2.40955, 1.64366) circle (2pt) node[above right]{$\beta_2$};
			\draw[fill=black](2,0) circle (2pt) node[left]{$\alpha_1$};
			\draw[fill=black](0,6) circle (2pt) node[left]{$\alpha_2$};
			\draw[fill=black](6,4) circle (2pt) node[right]{$\alpha_3$};
			\draw[fill=black](1.33333, 2) circle (2pt);
			\draw[fill=black](3.4641, 1.4641) circle (2pt);
			\draw[fill=black](3.21539, 4.9282) circle (2pt);
			
			\draw[thick, dashed, domain=0:360,smooth,variable=\t]
			plot ({2.6906+2.0018*0.2029*cos(\t)-1.44518*0.979199*sin(\t)},
			{3+1.44518*0.2029*sin(\t)+2.0018*0.979199*cos(\t)});
			
			
		\end{tikzpicture}
		\caption{Marden theorem. An example for $m_1=1$, $m_2=2$, $m_3=\sqrt3$.}\label{fig:marden}
	\end{center}
\end{figure}
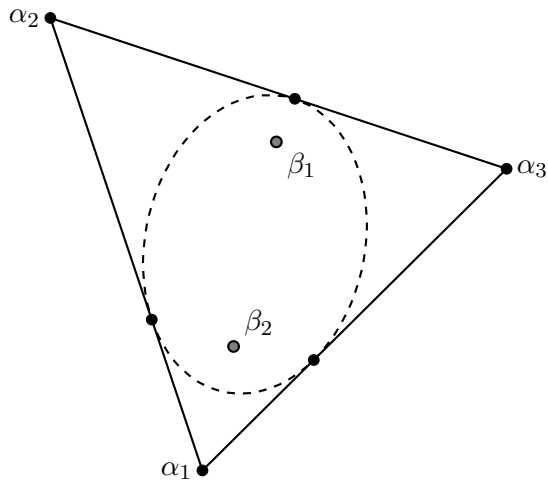

Now we are going to expand on those results from the geometric theory of polynomials, in order to give complete characterization of ellipses inscribed in triangles.

\begin{lemma}\label{lem:focusfocus}
Suppose that $F_1$ is a point inside a triangle.
Then there exists a unique ellipse inscribed in the triangle, such that one of its foci is point $F_1$.
\end{lemma}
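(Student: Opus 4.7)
\emph{Plan.} Given $F_1$ in the interior of the triangle, the plan is to construct the second focus $F_2$ by reflecting $F_1$ across the three sides and taking the circumcenter of the resulting triangle, and then to assemble the ellipse via the first focal property.

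Reflect $F_1$ across each side $\ell_i$ ($i=1,2,3$) to obtain $F_1^{(i)}$. Since $F_1$ is strictly interior to the triangle and hence not on its circumscribed circle, Simson's theorem (Theorem~\ref{th:simson}) tells us the feet of the three perpendiculars from $F_1$ to the sides are non-collinear; the reflections $F_1^{(i)}$, being images of these feet under the homothety centered at $F_1$ with ratio $2$, are then also non-collinear. Let $F_2$ be the circumcenter of $\triangle F_1^{(1)} F_1^{(2)} F_1^{(3)}$ and set $2a := |F_2 F_1^{(i)}|$, the common circumradius.

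Consider the ellipse $\mathcal E$ with foci $F_1, F_2$ and focal sum $2a$. For each $i$, the line through $F_2$ and $F_1^{(i)}$ meets $\ell_i$ at a point $P_i$, and because $\ell_i$ is the perpendicular bisector of $F_1 F_1^{(i)}$ one has $|F_1 P_i|=|F_1^{(i)} P_i|$. Provided $P_i$ lies on the segment $F_1^{(i)} F_2$ (equivalently, $F_1$ and $F_2$ are on the same side of $\ell_i$), this gives $|F_1 P_i|+|P_i F_2|=|F_1^{(i)} F_2|=2a$, so $P_i\in\mathcal E$. The same reflection matches the angles that $P_i F_1$ and $P_i F_2$ make with $\ell_i$, so by the first focal property (Proposition~\ref{prop:focal.property}), $\ell_i$ is the tangent line to $\mathcal E$ at $P_i$. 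Uniqueness then falls out immediately: any other inscribed ellipse through $F_1$ has its other focus equidistant from the three reflections by the same identity, so by uniqueness of the circumcenter it must coincide with $F_2$, and the focal sum $2a$ is then also forced.

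The main obstacle is to verify the proviso that $F_2$ is itself interior to $\triangle ABC$; this is precisely what makes $\mathcal E$ nondegenerate and places the tangency points $P_i$ on the sides rather than on their extensions. I would handle this by invoking the second focal property (Proposition~\ref{prop:focal.property2}) together with classical isogonal-conjugate theory: read at each vertex $V$, the proposition forces the rays $VF_1$ and $VF_2$ to be isogonal with respect to the two sides meeting at $V$, which is exactly the defining property of the isogonal conjugate of $F_1$ with respect to $\triangle ABC$, and it is classical that isogonal conjugation preserves the interior of a triangle. As a sanity check, when $F_1$ is the incenter the three distances $d(F_1,\ell_i)$ coincide, so $F_1$ is itself equidistant from the three reflections, whence $F_2=F_1$ and $\mathcal E$ degenerates to the incircle, as expected.
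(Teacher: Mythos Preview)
Your construction coincides with the paper's: the paper takes $F_2$ to be the intersection of the bisectors of $\angle F_1'' A F_1'''$ and $\angle F_1'' C F_1'$, but since each vertex is equidistant from the two reflections through the sides meeting there, those angle bisectors are exactly the perpendicular bisectors of the corresponding sides of the reflection triangle---so the paper's $F_2$ is your circumcenter. Your write-up is actually more complete than the paper's, which essentially argues only the uniqueness direction (any second focus must lie on those bisectors) and leaves existence implicit; your appeal to Simson's theorem for non-collinearity and your explicit description of the tangency points $P_i$ are genuine additions.

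There is, however, a circularity in your last step. You invoke Proposition~\ref{prop:focal.property2} at a vertex $V$ to conclude that $VF_1$ and $VF_2$ are isogonal, but that proposition speaks about the two tangent lines from $V$ to the ellipse, and you have not yet established that those tangents are the two sides of the triangle through $V$; that is precisely what needs $F_2$ to be interior (so that each $P_i$ lands on the side and not on its extension). The fix is to bypass the ellipse and show directly that your circumcenter is the isogonal conjugate of $F_1$: since $A$ lies on $\ell_2$ and $\ell_3$, the rays $AF_1^{(2)}$ and $AF_1^{(3)}$ are the reflections of $AF_1$ across those two sides, and a short angle count shows that the bisector of $\angle F_1^{(2)} A F_1^{(3)}$ (the perpendicular bisector through $A$, hence through $F_2$) is exactly the isogonal of $AF_1$ in $\angle A$. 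The same holds at $B$ and $C$, so $F_2$ is the isogonal conjugate of $F_1$, and now the classical fact that isogonal conjugation preserves the open interior of the triangle gives $F_2$ interior. With that in hand, your argument for tangency, non-degeneracy ($2a=|F_1P_i|+|P_iF_2|>|F_1F_2|$), and uniqueness goes through cleanly.
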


\begin{proof} Denote the triangle by $ABC$. Denote by $F_1'$, $F_1''$, $F_1'''$ the points symmetric to $F_1$  with respect to
sides $BC$, $AC$, $AB$ respectively. Then the second focal point of the ellipse inscribed in $\triangle ABC$ is obtained as the intersection of bisectors of angles $F_1''AF_1'''$ and $F_1''CF_1'$.  Namely, if $F_2$ is a second focal point, then triangle $CF_1''F_2$ is congruent to triangle $CF_1'F_2$. Thus, angle $F_2CF_1''$ is equal to angle $F_2CF_1'$.
\end{proof}

\begin{theorem}\label{thm:ellipseconc}
Let $ABC$ be a given triangle, and $\mathcal{E}$ an ellipse inscribed in it, touching the sides $BC$, $AC$, $AB$ in points $K$, $L$, $M$ respectively.
Then the lines $AK$, $BL$, $CM$ are concurrent.
(See Figure \ref{fig:ellipseconc}.)
\end{theorem}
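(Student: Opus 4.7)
The plan is to apply Ceva's theorem (Theorem~\ref{thm:ceva}), so the task reduces to verifying
\begin{equation*}
\frac{AM}{MB}\cdot\frac{BK}{KC}\cdot\frac{CL}{LA}=1.
\end{equation*}
To compute these tangency ratios for an inscribed ellipse, my first move is to normalise by an affine change of coordinates: choose an affine transformation $\Phi$ of the plane which sends the ellipse $\mathcal{E}$ onto a circle $\mathcal{E}'$ (such a $\Phi$ always exists, since every ellipse is affinely equivalent to a circle). Under $\Phi$, the triangle $ABC$ maps to a triangle $A'B'C'$ whose sides are tangent to $\mathcal{E}'$ at the image points $K'=\Phi(K)$, $L'=\Phi(L)$, $M'=\Phi(M)$ on the corresponding sides. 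Since affine maps preserve ratios of collinear segments and concurrence (or parallelism) of lines, it is enough to prove the claim for the circle picture.

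For a circle inscribed in a triangle, the two tangent segments drawn from any vertex have equal length, so $A'L'=A'M'$, $B'M'=B'K'$, and $C'K'=C'L'$. Substituting these into the Ceva product, the six lengths cancel in pairs and the product collapses to $1$. Ceva's theorem then yields the concurrence of $A'K'$, $B'L'$, $C'M'$, which pulls back through $\Phi^{-1}$ to the concurrence of $AK$, $BL$, $CM$.

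The only delicate point is the reduction step: one must check that the tangency between a line and a smooth conic, the identification of the touching points on each side, and the ratios of collinear segments all transfer correctly under $\Phi$; these are standard consequences of $\Phi$ restricting to an affine bijection on every line and being a diffeomorphism of the plane. A second route, more in keeping with the theme of Section~\ref{sec:marden}, is to invoke Marden's theorem (Theorem~\ref{th:marden}) directly: once one knows that every ellipse inscribed in $\triangle ABC$ arises as the Marden ellipse attached to some weights $(m_1:m_2:m_3)$ placed at the vertices, the touching points divide the three sides in ratios $m_1:m_2$, $m_2:m_3$, $m_3:m_1$, and the Ceva product equals $1$ automatically.
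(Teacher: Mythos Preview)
Your argument is correct and matches the paper's proof essentially verbatim: the paper, too, applies an affine transformation sending $\mathcal{E}$ to a circle, invokes the equal-tangent-length property to make the Ceva product equal to $1$, and concludes by Ceva's theorem. Your aside about a Marden-based alternative is likewise present in the paper as a second proof, which combines Theorem~\ref{th:marden} with the uniqueness Lemma~\ref{lem:focusfocus}.
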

\begin{figure}[h]
	\begin{center}
		\begin{tikzpicture}
						\draw[gray](2,0)--(3.21539, 4.9282);
			\draw[gray](0,6)--(3.4641, 1.4641);
			\draw[gray](6,4)--(1.33333, 2);

			\draw[thick](2,0)--(0,6)--(6,4)--cycle;
			\draw[fill=black](2,0) circle (2pt) node[left]{$A$};
			\draw[fill=black](0,6) circle (2pt) node[left]{$B$};
			\draw[fill=black](6,4) circle (2pt) node[right]{$C$};
			\draw[fill=black](1.33333, 2) circle (2pt) node[left]{$M$};
			\draw[fill=black](3.4641, 1.4641) circle (2pt)node[below right]{$L$};
			\draw[fill=black](3.21539, 4.9282) circle (2pt)node[above]{$K$};
			
			\draw[thick, dashed, domain=0:360,smooth,variable=\t]
			plot ({2.6906+2.0018*0.2029*cos(\t)-1.44518*0.979199*sin(\t)},
			{3+1.44518*0.2029*sin(\t)+2.0018*0.979199*cos(\t)});

		\end{tikzpicture}
		\caption{The lines connecting triangle vertices with the points where the inscribed ellipse touches the opposite sides are concurrent.}\label{fig:ellipseconc}
	\end{center}
\end{figure}
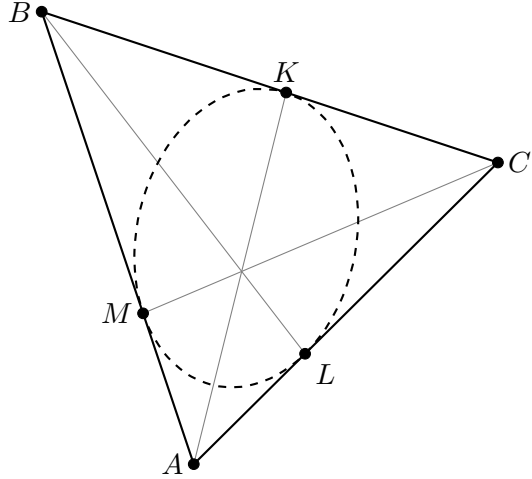
\begin{proof}
Let $\varphi$ be an affine transformation that maps the ellipse $\mathcal{E}$ to a circle $\mathcal{E}'$.
That circle is inscribed in triangle $\varphi(ABC)$.

Since segments of each pair $\varphi(A)\varphi(L)$ and $\varphi(A)\varphi(M)$, $\varphi(B)\varphi(K)$ and $\varphi(B)\varphi(M)$, $\varphi(C)\varphi(K)$ and $\varphi(C)\varphi(L)$ are of equal lengths as tangent segments to the circle from the same vertex, the Ceva's theorem (Theorem \ref{thm:ceva}) gives that lines $\varphi(AK)$, $\varphi(BL)$, $\varphi(CL)$ are concurrent.

The statement follows immediately.
\end{proof}

In order to align further this matter with the Marden theorem, we provide an alternative proof of the last theorem.

\begin{proof}[Second proof of Theorem \ref{thm:ellipseconc}] 
Let $F_1$ be one of the foci of $\mathcal{E}$. Denote the complex numbers corresponding to $A$, $B$, $C$, and $F_1$ by $z_1$, $z_2$, $z_3$, and $q$ respectively.
We search for nonzero real numbers $m_1$, $m_2$, $m_3$ such that $q$ is one of the zeros of the derivative of $(z-z_1)^{m_1} (z-z_2)^{m_2} (z-z_3)^{m_3}$. 
This imposes one complex relation on real numbers $m_1$, $m_2$, $m_3$. 
That complex relation gives two real relations which uniquely determine three real numbers $m_1$, $m_2$, $m_3$ up to a nonzero  real factor. 
According to Marden's theorem (Theorem \ref{th:marden}), there is an ellipse $\mathcal{E}_1$ inscribed in triangle $ABC$ which has $F_1$ as one of its foci, with the other focus corresponding to the second zero of the derivative.
Since ellipses $\mathcal{E}$ and $\mathcal{E}_1$ are both inscribed in $\triangle ABC$ and both have $F_1$ as one of their foci, Lemma \ref{lem:focusfocus} implies that they coincide.
\end{proof}

 \begin{remark} One can compare the last theorem with the Bradley theorem, see e.g. \cite{Bar}.
 \end{remark}
	
\begin{theorem}\label{thm:ellipsemarden}
Every ellipse inscribed in a triangle is a Marden ellipse, i.e.~there exist positive real numbers
$m_1$, $m_2$, $m_3$ such that the foci of the ellipse are the zeros of the logarithmic derivative of the function
$(z-\alpha_1)^{m_1} (z-\alpha_2)^{m_2} (z-\alpha_3)^{m_3}$, with $\alpha_1$, $\alpha_2$, $\alpha_3$ being the complex numbers corresponding to the vertices of the triangle.
The numbers $m_1$, $m_2$, $m_3$ are unique up to a non-zero factor.
	\end{theorem}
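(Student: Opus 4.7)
The plan is to combine Marden's theorem (Theorem \ref{th:marden}) with the uniqueness lemma (Lemma \ref{lem:focusfocus}), following the template of the second proof of Theorem \ref{thm:ellipseconc} but supplementing it with an explicit construction of a \emph{positive} triple $(m_1,m_2,m_3)$ from the barycentric coordinates of a focus of $\mathcal{E}$. I would fix a focus $F_1$ of $\mathcal{E}$ with complex coordinate $q$, and look for real numbers $(m_1,m_2,m_3)$ such that $q$ is a zero of the logarithmic derivative $F(z):=\sum_{i=1}^{3} m_i/(z-\alpha_i)$; the other focus of $\mathcal{E}$ will ultimately be identified with the remaining zero of $F$.

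First I would settle uniqueness and exhibit a positive solution. The condition $F(q)=0$ is a single complex linear equation on $(m_1,m_2,m_3)\in\mathbb{R}^3$, hence two real equations, which I claim are independent: otherwise the three reciprocals $1/(q-\alpha_i)$ would be $\mathbb{R}$-proportional, forcing $q,\alpha_1,\alpha_2,\alpha_3$ to be collinear, impossible since the $\alpha_i$ form a triangle. Thus the real solution space is one-dimensional, giving uniqueness up to a nonzero real factor. Next, since $F_1$ lies in the open interior of the triangle, $q$ admits strictly positive barycentric coordinates $\lambda_1,\lambda_2,\lambda_3>0$ with $q=\sum_{i}\lambda_i\alpha_i$ and hence $\sum_{i}\lambda_i(q-\alpha_i)=0$; taking complex conjugates and using the identity $\overline{q-\alpha_i}=|q-\alpha_i|^2/(q-\alpha_i)$ then shows that $m_i:=\lambda_i|q-\alpha_i|^2>0$ is a positive solution of $F(q)=0$.

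Finally, applying Marden's theorem to this positive triple and denoting by $F_2'$ the second zero of $F(z)$, I would obtain a conic with foci $F_1,F_2'$ tangent to each side $[\alpha_i,\alpha_j]$ at the interior point dividing it in the positive ratio $m_i:m_j$, i.e., an inscribed ellipse with $F_1$ as a focus. By Lemma \ref{lem:focusfocus} it must then coincide with $\mathcal{E}$, identifying $F_2'$ with the second focus of $\mathcal{E}$. The delicate point I anticipate is precisely the control of signs—showing that the triple can be chosen positive rather than merely nonzero real—and it is to handle this that I introduce the barycentric construction above, which rests crucially on the interiority of the focus of an inscribed ellipse.
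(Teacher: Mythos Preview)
Your argument is correct. The barycentric construction is a clean way to secure positivity: from $q=\sum\lambda_i\alpha_i$ with $\lambda_i>0$ you get $\sum\lambda_i(q-\alpha_i)=0$, conjugate, and rewrite via $\overline{q-\alpha_i}=|q-\alpha_i|^2/(q-\alpha_i)$ to obtain $m_i=\lambda_i|q-\alpha_i|^2>0$ with $F(q)=0$. Your rank argument for uniqueness is also fine: if the three numbers $1/(q-\alpha_i)$ were real multiples of one another then the $\alpha_i$ would lie on a single line through $q$.

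However, the paper proves this theorem by a different and shorter route. It does not work from the focus at all: instead it invokes Theorem~\ref{thm:ellipseconc} (the cevians through the points of tangency are concurrent), and then Ceva's relation \eqref{eq:ceva} immediately yields positive $m_1,m_2,m_3$, unique up to scale, such that the tangency points divide the sides in the ratios $m_i:m_j$. Marden's theorem then identifies the foci of $\mathcal{E}$ with the zeros of the logarithmic derivative. In effect, the paper reads the $m_i$ off the \emph{tangency points}, where positivity is geometrically obvious, whereas you manufacture the $m_i$ from the \emph{focus} via barycentric coordinates. Your approach is essentially a strengthened version of the paper's second proof of Theorem~\ref{thm:ellipseconc}, repurposed to prove Theorem~\ref{thm:ellipsemarden} directly; it has the virtue of not relying on Theorem~\ref{thm:ellipseconc} and of giving an explicit closed formula $m_i=\lambda_i|q-\alpha_i|^2$ for the weights in terms of the focus, at the cost of needing Lemma~\ref{lem:focusfocus} to close the loop.
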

	
\begin{proof} Denote the triangle by $ABC$, the inscribed ellipse by $\mathcal{E}$, and by $K$, $L$, $M$ the common points of $\mathcal{E}$ with sides $BC$, $AC$, $AB$ respectively.
According to Theorem \ref{thm:ellipseconc}, the lines $AK$, $BL$, $CM$ are concurrent, thus Ceva's theorem implies that \eqref{eq:ceva} is satisfied.
This relation determines positive numbers $m_1$, $m_2$, $m_3$ uniquely up to a non-zero factor.
	\end{proof}

\section{$3$-periodic trajectories of billiards within ellipses}\label{sec:3periodic}

Now, after reviewing billiards and Marden's theory in previous sections, we are equipped and ready to address the the title question.

\begin{theorem}\label{thm:triangles} Every triangle  is a $3$-periodic trajectory of the billiard within an ellipse.
That ellipse is uniquely determined.
\end{theorem}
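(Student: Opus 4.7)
The plan is to realise the desired ellipse as the Marden ellipse of the \emph{excentral triangle} of $T_1T_2T_3$. The starting observation is that the billiard reflection law at a prospective trajectory vertex $T_i$ forces the tangent line $\ell_i$ to the boundary ellipse $\mathcal E$ at $T_i$ to be the external bisector of the interior angle $\angle T_{i-1}T_iT_{i+1}$: the two segments $T_iT_{i-1}$ and $T_iT_{i+1}$ lie on the same side of $\ell_i$ and form equal angles with it, so the internal bisector at $T_i$ must be the normal to $\ell_i$. Hence the three tangents $\ell_1,\ell_2,\ell_3$ are determined by the triangle alone, and their pairwise intersections $A_i:=\ell_{i-1}\cap\ell_{i+1}$ are the three excenters of $T_1T_2T_3$, since every excenter is the meet of two external and one internal bisector.

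The excentral triangle $A_1A_2A_3$ is always acute: its angle at $A_i$ equals $\frac{\pi}{2}-\frac{\theta_i}{2}$, where $\theta_i$ is the angle of $T_1T_2T_3$ at $T_i$. Moreover, the altitude of $A_1A_2A_3$ from $A_i$ is the perpendicular to the opposite side $A_{i-1}A_{i+1}=\ell_i$ through $A_i$, and this is precisely the internal bisector of $T_1T_2T_3$ at $T_i$; it meets $\ell_i$ at $T_i$, so $T_1,T_2,T_3$ are the feet of the altitudes of the acute triangle $A_1A_2A_3$. I would then apply Lemma~\ref{lem:ms} to $A_1A_2A_3$ to produce positive weights $m_1,m_2,m_3$, explicit in the side lengths of $A_1A_2A_3$, for which each $T_i$ divides the side of $A_1A_2A_3$ opposite $A_i$ in the prescribed ratio. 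Concurrency of the altitudes at the orthocenter supplies the Ceva identity \eqref{eq:ceva}, so Marden's Theorem~\ref{th:marden} furnishes an ellipse $\mathcal E$ inscribed in $A_1A_2A_3$, touching those sides at $T_1,T_2,T_3$, with foci given explicitly as the two zeros of
\[
\frac{m_1}{z-a_1}+\frac{m_2}{z-a_2}+\frac{m_3}{z-a_3},
\]
where $a_i\in\mathbb C$ represents the excenter $A_i$. Since the contact points lie in the interior of the sides of an acute triangle, the resulting conic is genuinely an ellipse.

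By construction the tangent to $\mathcal E$ at each $T_i$ is $\ell_i$, so the billiard reflection law at $T_i$ holds and $T_1T_2T_3$ is a $3$-periodic trajectory in $\mathcal E$. Uniqueness follows by reversing the reasoning: any ellipse realising $T_1T_2T_3$ as a $3$-periodic trajectory must be tangent to $\ell_i$ at $T_i$ for each $i$, hence is an inscribed conic in $A_1A_2A_3$ with contact points $T_1,T_2,T_3$; by Theorem~\ref{thm:ellipsemarden} such an inscribed ellipse is uniquely determined by the contact ratios, which here are pinned down by Lemma~\ref{lem:ms}. I expect the main obstacle to be the geometric identification at the outset: recognising that the triangle formed by the required tangents to $\mathcal E$ at $T_1,T_2,T_3$ is precisely the excentral triangle of $T_1T_2T_3$ and hence that $T_1T_2T_3$ is the orthic triangle of an acute triangle, so that the Ceva--Marden machinery of Sections~\ref{sec:triangles} and~\ref{sec:marden} applies directly.
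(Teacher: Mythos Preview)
Your proposal is correct and essentially identical to the paper's first proof: the paper constructs the triangle $ABC$ ``formed by the bisectors of the exterior angles'' of the given triangle (your excentral triangle), invokes Theorem~\ref{th:acute} to identify the original vertices as the altitude feet, then applies Ceva together with Lemma~\ref{lem:ms} and Marden's Theorem~\ref{th:marden} to produce the inscribed ellipse. Your treatment of uniqueness via Theorem~\ref{thm:ellipsemarden} is a touch more explicit than the paper's first proof, which leaves uniqueness implicit; the paper's second proof handles it instead through the five-condition determination of a conic and Corollary~\ref{cor:simpson}.
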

\begin{proof}
Let an arbitrary triangle $KLM$ be given and
construct a new triangle $ABC$, formed by the bisectors of the exterior angles of $KLM$, which implies that $KLM$ is a $3$-periodic billiard trajectory within triangle $ABC$.

According to Theorem \ref{th:acute}, points $K$, $L$, $M$ are the feet of the altitudes of triangle $ABC$, so the three lines $AK$, $BL$, $CM$ intersect at one point -- the orthocenter of triangle $ABC$.
Thus, according to  Ceva's Theorem (see Theorem \ref{thm:ceva}),
there exist nonzero real numbers $m_1$, $m_2$, $m_3$ such that point $K$ divides $BC$ in the ratio $m_2:m_3$, point $L$ divides $CA$ in the ratio $m_3:m_1$, and $M$ divides $AB$ in the ratio $m_1:m_2$. The numbers  $m_1$, $m_2$, $m_3$ can be calculated in terms of the lengths
of the sides of triangle $ABC$, see Lemma \ref{lem:ms} and relation \eqref{eq:ms}.

Now, according to  Marden's theorem (Theorem  \ref{th:marden}), there exists a conic $\mathcal{E}$ which touches the sides $AB$, $BC$, $CA$ respectively at $M$, $K$, $L$.
Since $K$, $L$, $M$ are inner points of those sides, $\mathcal{E}$ must be an ellipse inscribed in the triangle $ABC$, see Figure \ref{fig:marden}.
By construction, $KLM$ is a billiard trajectory
within ellipse $\mathcal{E}$.
\end{proof}

We provide also another proof, based on theorems of Pascal and Menelaus.

\begin{proof}[Second proof of Theorem \ref{thm:triangles}]
Let $KLM$ be a triangle, lines $k$, $l$, $m$ the bisectors of its exterior angles at the vertices $K$, $L$, $M$ respectively, and $P$, $Q$, $R$ the intersections of those lines with $LM$, $KM$, $KL$ respectively.
According to Corrolary \ref{cor:exterior-angles}, points $P$, $Q$, $R$ are collinear.

Now, there is a unique conic touching the lines $k$, $l$ at $K$, $L$ and containing $M$.
Let $m'$ be the tangent line to that conic at $M$ and denote $R'=m'\cap KL$.
According to Corollary \ref{cor:simpson}, $P$, $Q$, $R'$ are collinear, which gives $R=R'$, i.e.~$m=m'$.
Since the conic is touching the sides of the triangle determined by $k$, $l$, $m$ in inner points $K$, $L$, $M$, it must be an ellipse. 
\end{proof}





\section{Convex $4$-periodic trajectories of billiards within ellipses}\label{sec:convex4}

\subsection{Is every parallelogram a trajectory of an elliptical billiard?}\label{sec:parallel}

In this section, in Theorem \ref{th:parall}, we will show that being a convex $4$-periodic elliptical billiard trajectory represents a complete characterization of parallelograms.

We start by formulating the following useful statement.

\begin{lemma}\label{lemma:ellipse-rectangle}
Let a $4$-periodic convex billiard trajectory within an ellipse be given, see Figure \ref{fig:4-periodic}.
Then we have:
\begin{itemize}
	\item its caustic is an ellipse;
	\item that trajectory is a parallelogram;
	\item the tangent lines to the boundary ellipse at the reflection points form a rectangle;
	\item the parallelogram is also a closed billiard trajectory within the rectangle;
	\item the diagonals of the rectangle are parallel to the sides of the parallelogram.
\end{itemize} 
\begin{figure}[h]
	\begin{center}
		\begin{tikzpicture}[]
			\coordinate (E) at (-1.75716, 0.985292);
			\coordinate (F) at (-2.31683, -0.45909);
			\coordinate (G) at (1.75716, -0.985292);
			\coordinate (H) at (2.31683, 0.45909);
			
			\coordinate (A) at (-2.80513,0.362314);
			\coordinate (B) at (-1.02193, -2.63736);
			\coordinate (C) at (2.80513,-0.362314);
			\coordinate (D) at (1.02193, 2.63736);

			\draw[very thick,gray](0,0) ellipse (2.44949 and 1.41421);

			\draw[thick](E)--(F)--(G)--(H)--cycle;

			\draw[fill=black](E) circle (2pt) 
			;
			\draw[fill=black](F) circle (2pt) 
			;
			\draw[fill=black](G) circle (2pt)
			;
			\draw[fill=black](H) circle (2pt) 
			;
			
			\draw(A)--(B)--(C)--(D)--cycle;
			
			\draw[dashed](A)--(C);
			\draw[dashed](B)--(D);
			
			
		\end{tikzpicture}
		\caption{A convex $4$-periodic trajectory within ellipse and the tangent lines at the points of reflection.}\label{fig:4-periodic}
	\end{center}
\end{figure}
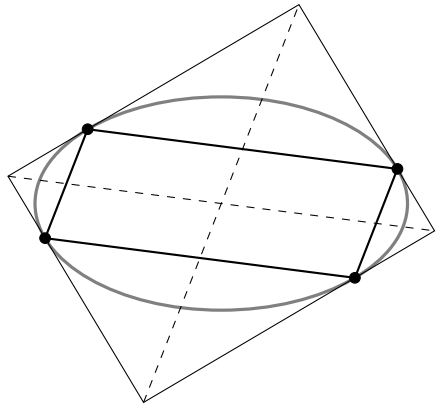
\end{lemma}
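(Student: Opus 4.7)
The lemma bundles five claims, and I would establish them in the order stated, with a single reflection computation doing most of the work. For the caustic being an ellipse, I would proceed in direct analogy with Lemma \ref{lemma:hyperbola}: were the caustic a hyperbola, by Proposition \ref{prop:ellipse.caustic} each of the four sides of the trajectory would cross the focal segment $F_1F_2$. But a straight segment can meet the boundary of a convex polygon in at most two points, contradicting the four intersection points just produced.

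For the parallelogram and rectangle claims, I would encode the billiard law algebraically. Let $R_i$ denote the orthogonal reflection (its linear part only) through the tangent line $t_i$ to $\mathcal{E}$ at $P_i$; the billiard reflection law then reads $R_i(P_i - P_{i-1}) = P_{i+1} - P_i$, with indices modulo $4$. The composition $R_2R_1$ is a rotation, and
\[
R_2R_1(P_1 - P_4) = R_2(P_2 - P_1) = P_3 - P_2,
\]
with an analogous identity for $R_3R_2$. Length preservation under rotations gives $|P_4P_1| = |P_2P_3|$ and $|P_1P_2| = |P_3P_4|$, and a convex quadrilateral with both pairs of opposite sides equal is a parallelogram (apply SSS to the two triangles cut off by a diagonal and alternate interior angles to deduce parallelism). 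Any parallelogram inscribed in an ellipse must be centered at the centre $O$ of the ellipse (pass to a circle by an affine map; an inscribed parallelogram in a circle is cyclic and therefore a rectangle, centred at the circle's centre), so $P_3 = -P_1$ and $P_4 = -P_2$. Central symmetry then gives $t_3 \parallel t_1$ and $t_4 \parallel t_2$, so the four tangent lines bound a parallelogram. For perpendicularity, the same composition $R_2R_1$ now sends $P_1 + P_2$ to $-(P_1+P_2)$:
\[
R_1(P_1 + P_2) = R_1(P_1 - P_4) = P_2 - P_1, \qquad R_2(P_2 - P_1) = P_3 - P_2 = -(P_1 + P_2).
\]
A planar rotation admitting $-1$ as a real eigenvalue on a nonzero vector must be the rotation by $\pi$; consequently $t_1 \perp t_2$, and the circumscribing quadrilateral is a rectangle.

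The last two claims then follow quickly. At each vertex $P_i$ the side of the rectangle through $P_i$ is exactly $t_i$, so the reflection law inherited from $\mathcal{E}$ is at the same time the billiard law for the rectangle, and $P_1P_2P_3P_4$ is a closed billiard trajectory inside it. For the diagonals, the rectangle's vertex $Q_2 := t_1 \cap t_2$ is the pole of the chord $P_1P_2$ with respect to $\mathcal{E}$, hence lies on the line through $O$ and the midpoint of $P_1P_2$; by central symmetry the opposite vertex is $Q_4 = -Q_2$, so the diagonal $Q_2Q_4$ lies on that line. In the parallelogram centred at $O$, this line is parallel to the two sides $P_2P_3$ and $P_4P_1$. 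The other diagonal is treated identically. The main obstacle I anticipate is the perpendicularity step, and the cleanest way around it I see is the $R_2R_1$ eigenvector computation displayed above.
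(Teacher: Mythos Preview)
Your argument is correct and considerably more self-contained than the paper's. The paper disposes of the first claim by remarking that any trajectory with a hyperbolic caustic has self-intersections, obtains the parallelogram property by citing an external result (\cite{DR2019}*{Theorem 4}) on central symmetry of periodic trajectories with elliptic caustic, and then declares the remaining claims to follow from ``straightforward calculation of the angles.'' Your route is genuinely different: you deduce equality of opposite sides directly from the isometry $R_2R_1$, recover central symmetry via the affine reduction to a circle, and then extract $t_1\perp t_2$ from the elegant observation that $R_2R_1$ sends $P_1+P_2$ to its negative, forcing the rotation angle to be $\pi$. The pole--polar argument for the diagonals is likewise explicit where the paper is silent. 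What the paper's approach buys is brevity and an appeal to a general structural fact about periodic trajectories; what yours buys is independence from outside references and a transparent mechanism (the reflection composition) that simultaneously yields the parallelogram and the right angle. Both are sound; yours would fit well as an expanded, reference-free alternative proof.
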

\begin{proof}
The first statement follows from the fact that any trajectory with hyperbola as caustic has self-intersections.
Then, the second statement is obtained since any periodic trajectory with ellipse as caustic is symmetric with respect to the center, see for example \cite{DR2019}*{Theorem 4}.
The rest is obtained by straightforward calculation of the angles and application of the billiard reflection law.
\end{proof}

\begin{lemma}\label{lemma:parall-rectangle}
Any parallelogram is a closed billiard trajectory within a unique rectangle. 

Moreover, for any rectangle and any point on one of its sides, there is a unique $4$-periodic billiard trajectory within the rectangle such that the given point is one of its vertices.
\end{lemma}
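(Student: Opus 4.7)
The plan is to prove both assertions by explicit construction, exploiting the fact that in an axis-aligned rectangle the tangent at any boundary point is horizontal or vertical, so the reflection law at any vertex of a billiard trajectory reduces to flipping a single component of the velocity vector.

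For the second statement (uniqueness of the $4$-periodic orbit through a given point), I would place the rectangle $R$ in coordinates with centre at the origin, sides parallel to the axes, and corners $(\pm A,\pm B)$, and take the given point on the bottom side to be $P_1=(x_0,-B)$ with $|x_0|<A$. For existence, set $P_2=(A,-x_0B/A)$, $P_3=(-x_0,B)$, $P_4=(-A,x_0B/A)$ and verify by direct computation that all four segments $P_iP_{i+1}$ have slope $\pm B/A$, so that they are parallel to the two diagonals of $R$, and that the reflection law is satisfied at each $P_i$. For uniqueness, given any $4$-periodic orbit $Q_1Q_2Q_3Q_4$ with $Q_1=P_1$, a sum-of-displacements argument forces the orbit to undergo two horizontal and two vertical reflections per period, hence to visit each of the four sides exactly once (the alternative being degeneration to a $2$-periodic orbit); writing $Q_2=(A,y_2)$ and imposing the reflection law at $Q_2$ and at $Q_3$ on the top side together with the closing condition yields $y_2=-x_0B/A$, so the candidate above is the only solution.

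For the first statement (uniqueness of the containing rectangle), let $EFGH$ be the given parallelogram and set $\hat a=\vec{EF}/|EF|$, $\hat b=\vec{FG}/|FG|$. In any billiard trajectory the tangent direction at a reflection vertex is parallel to the sum of the incoming and outgoing unit velocity vectors, so the tangent line at $F$ must be parallel to $\hat a+\hat b$, and the same holds at $H$ (with incoming $-\hat a$ and outgoing $-\hat b$). Similarly the tangent lines at $E$ and at $G$ are parallel to $\hat a-\hat b$. Since $|\hat a|=|\hat b|=1$, the identity $(\hat a+\hat b)\cdot(\hat a-\hat b)=|\hat a|^2-|\hat b|^2=0$ shows that these two directions are perpendicular. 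Hence the four lines through $E,F,G,H$ in the prescribed directions form a rectangle, uniquely determined, and by construction $EFGH$ is inscribed in it as a closed billiard trajectory.

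The main subtle point lies in the uniqueness half of the second statement: one has to rule out $4$-periodic trajectories that fail to touch all four sides of $R$ (which could arise as degenerate double iterates of a $2$-periodic orbit perpendicular to a pair of sides). This is handled by the closure-of-displacements computation, which forces exactly two horizontal and two vertical reflections per period, and hence one reflection on each side for a non-degenerate orbit; everything else in the argument is direct verification of the reflection law.
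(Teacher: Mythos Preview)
Your proof is correct and follows essentially the same approach as the paper, though with considerably more detail. The paper's proof is two sentences: for the first assertion it simply says that the rectangle's sides are the exterior angle bisectors of the parallelogram (leaving implicit why these four bisectors form a rectangle), and for the second it asserts without proof that any $4$-periodic trajectory in a rectangle has its segments parallel to the diagonals. Your unit-vector identity $(\hat a+\hat b)\cdot(\hat a-\hat b)=|\hat a|^2-|\hat b|^2=0$ makes explicit why the bisectors are mutually perpendicular, and your coordinate computation supplies an actual proof of the parallel-to-diagonals fact that the paper only refers to via a figure.

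One small point to tighten in your uniqueness argument for the second statement: you correctly deduce from velocity parity that a non-degenerate $4$-periodic orbit has two horizontal and two vertical reflections, and hence (since after a bottom reflection the next horizontal reflection must be on top, and similarly for left/right) visits each side exactly once. But you then write ``$Q_2=(A,y_2)$ \dots\ $Q_3$ on the top side'' without ruling out the ordering bottom--right--left--top. This gap is easily closed---either by checking that the alternative orderings force a vertex outside the rectangle, or more cleanly by observing that the equal-angle condition at each vertex forces all four segments to have slope $\pm m$, so the orbit is a parallelogram and hence centrally symmetric, which immediately gives $Q_3=-Q_1$ on the top side---but it deserves one more sentence.
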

\begin{proof}
The sides of the rectangle are bisectors of the exterior angles of the parallelogram.

The second part follows from the fact that the segments of any $4$-periodic billiard trajectory within a rectangle are parallel to its diagonals, see Figure \ref{fig:4-periodic}.
\end{proof}

\begin{theorem}\label{th:parall}
Any parallelogram is a closed billiard trajectory within an ellipse. Moreover, such an ellipse is uniquely determined for a given parallelogram.
\end{theorem}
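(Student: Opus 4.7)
The plan is to mirror the strategy used for triangles in Theorem~\ref{thm:triangles}: first circumscribe the given parallelogram by a polygon built from its exterior-angle bisectors, then construct an ellipse inscribed in that polygon tangent to its sides at the parallelogram's vertices, and finally argue that the elliptical billiard reflection law at those vertices is inherited from the reflection law in the circumscribing polygon.

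Given a parallelogram $EFGH$, I would first invoke Lemma~\ref{lemma:parall-rectangle} to produce the unique rectangle $ABCD$ whose sides are the exterior-angle bisectors of $EFGH$ and inside which $EFGH$ is a closed billiard trajectory. Placing the origin at the common center with axes along the sides of $ABCD$, I would set $ABCD=[-p,p]\times[-q,q]$ and, using the rectangle-billiard condition that the segments of $EFGH$ are parallel to the diagonals of $ABCD$, parametrise the vertices as $E=(-p,tq)$, $F=(tp,-q)$, $G=(p,-tq)$, $H=(-tp,q)$ for a single parameter $t\in(-1,1)$. Next I would search for a central conic $Ax^2+Bxy+Cy^2=1$ passing through these four points with the rectangle's sides as tangent lines there; centrality is forced because the midpoints of the two non-parallel chords $EG$ and $FH$ both coincide with the origin, so the origin lies on two distinct diameters of the conic and must be its center. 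Two of the four tangency conditions then pin down the ratios $A:B:C$ and a single passage-through condition fixes the scale, producing the explicit candidate
\[
\frac{x^2}{p^2}+\frac{2txy}{pq}+\frac{y^2}{q^2}=1-t^2.
\]
A quick check shows $B^2-4AC<0$, so this is genuinely an ellipse $\mathcal{E}$, and its foci can then be read off by diagonalising the quadratic form. Because the tangent line to $\mathcal{E}$ at each of $E,F,G,H$ coincides with the corresponding side of $ABCD$, and $EFGH$ satisfies the reflection law at each vertex with respect to that side, $EFGH$ is automatically a $4$-periodic trajectory of the billiard within $\mathcal{E}$.

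For uniqueness, I would consider any other ellipse $\mathcal{E}'$ having $EFGH$ as a $4$-periodic trajectory. Lemma~\ref{lemma:ellipse-rectangle} forces the four tangent lines to $\mathcal{E}'$ at the vertices to form a rectangle whose sides are the exterior-angle bisectors of $EFGH$, and the uniqueness part of Lemma~\ref{lemma:parall-rectangle} identifies this rectangle with $ABCD$. Then $\mathcal{E}'$ is inscribed in $ABCD$ with the same four tangency points as $\mathcal{E}$, so the coordinate computation above identifies $\mathcal{E}'$ with $\mathcal{E}$. The main obstacle I expect lies in the existence half: the eight tangency conditions (four lines each with a prescribed tangency point) might naively seem to over-determine a five-parameter conic, but this apparent conflict is resolved precisely because the parallelogram symmetry together with the billiard-in-rectangle condition $e/q=f/p$ collapses the problem to determining only three coefficients of a central conic, with the remaining tangency conditions then automatic by central symmetry.
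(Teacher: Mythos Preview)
Your argument is correct and complete. The parametrisation of the parallelogram by a single parameter $t\in(-1,1)$ is valid (central symmetry forces the centres of rectangle and parallelogram to coincide, and the diagonal-parallel condition from Lemma~\ref{lemma:parall-rectangle} pins down the remaining vertices once $E$ is fixed), the explicit conic you write down does pass through all four vertices with the sides of $ABCD$ as tangent lines there, and the discriminant check confirms it is an ellipse. Your uniqueness argument via Lemma~\ref{lemma:ellipse-rectangle} is also sound.

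This is, however, a genuinely different route from both proofs in the paper. The paper's first proof is synthetic: it fixes the conic by four points plus one tangent line and then uses a degenerate case of Pascal's theorem (Corollary~\ref{cor:pascal1}) to deduce that the remaining three sides of the rectangle are automatically tangent as well, followed by a case analysis to rule out degenerate conics, parabolas and hyperbolas. The paper's second, analytic proof does not write down the ellipse at all but instead locates the foci directly, via a chain of geometric lemmas (Lemmas~\ref{lem:OX}--\ref{lem:F1-exists}) culminating in a biquadratic equation for the focal coordinates. Your approach sits between these: it is coordinate-based like the second proof but far more direct, bypassing the focal lemmas entirely and producing the equation of the ellipse in one line, from which the foci can be recovered by diagonalising the form. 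What you lose is the projective-geometric insight of the Pascal argument and the explicit focal formulas of the paper's analytic proof; what you gain is brevity and a transparent explanation of why the ostensibly over-determined system of eight tangency conditions collapses to three linear equations in $A$, $B$, $C$.
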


We immediately provide a compact, synthetic proof of this theorem, based on the Pascal theorem (see Theorem \ref{th:pascal}). An alternative, analytic proof is given in Section \ref{sec:analytic-parallelogram}.

\begin{proof}
Let $EFGH$ be a given parallelogram.
According to Lemma \ref{lemma:parall-rectangle}, there is a unique rectangle within which $EFGH$ is a closed billiard trajectory, see Figure \ref{fig:4-periodic-diag}.

\begin{figure}[h]
	\begin{center}
		\begin{tikzpicture}[]
			\coordinate (E) at (1, 0);
			\coordinate (F) at (4, 2.25);
			\coordinate (G) at (3,3);
			\coordinate (H) at (0,0.75);
			
			\coordinate (A) at (0,0);
			\coordinate (B) at (4,0);
			\coordinate (C) at (4,3);
			\coordinate (D) at (0,3);
			
			\coordinate(X) at (7,0);
			\coordinate(Y) at (4,-2.25);
			
			\draw[gray](F)--(X)--(B);
			\draw[gray](E)--(Y)--(X);
			
			\draw[very thick](A)--(B)--(C)--(D)--cycle;
			\draw[thick,gray,dashed](A)--(C);
			\draw[thick,gray,dashed](B)--(D);

			\draw[thick](E)--(F)--(G)--(H)--cycle;
			
			\draw[fill=black](E) circle (2pt) node[below]{$E$}
			;
			\draw[fill=black](F) circle (2pt)
			node[right]{$F$}
			;
			\draw[fill=black](G) circle (2pt)
			node[above]{$G$}
			;
			\draw[fill=black](H) circle (2pt) node[left]{$H$}
			;
			\draw[fill=black](X) circle (2pt) node[below]{$X$}
			;
			\draw[fill=black](Y) circle (2pt) node[below]{$Y$}
			;
			
			\node at (3,-.3){$a$};
			\node at (4.2,1){$b$};

		\end{tikzpicture}
		\caption{A $4$-periodic billiard trajectory within a rectangle.}\label{fig:4-periodic-diag}
	\end{center}
\end{figure}
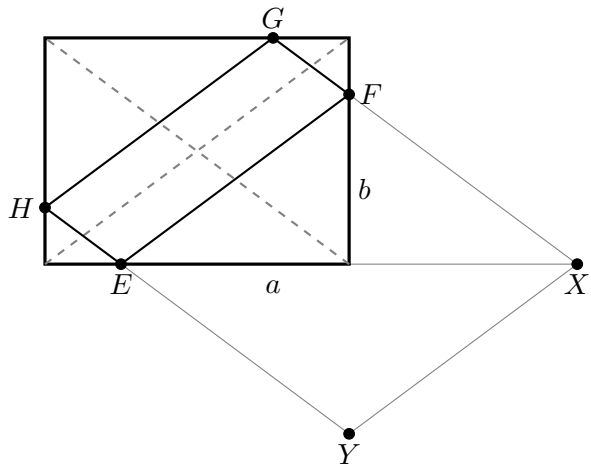
Denote by $a$ the side of the rectangle containing point $E$ and by $\mathcal{C}$ the unique conic circumscribed about $EFGH$ such that $a$ is its tangent line.
We want to prove that the conic is touching the remaining sides of the rectangle exactly at the points $F$, $G$, $H$.

Applying Pascal's theorem to $\mathcal{C}$ and its points $E$, $E$, $F$, $F$, $G$, $H$, (see Corollary \ref{cor:pascal1}) we get that the intersection points of the following pairs of lines are collinear:
$a$ and $FG$; $EF$ and $GH$; $b'$ and $EH$,
where $b'$ is the tangent line to $\mathcal{C}$ at $F$.
Denote $X=a\cap FG$, $Y=b'\cap EH$.
Since $EF$ and $GH$ are parallel, this implies that the line $XY$ also must be parallel to them, thus $Y$ is the intersection point of $EH$ with the line which is parallel to $EF$ and contains $X$.

Parallelogram $EFXY$ is rhombus, since $EF=FX$, so its diagonals are orthogonal to each other, which implies that $Y\in b$, where $b$ is the side of the rectangle containing point $F$.
Thus $b=b'$.

Similarly, we can prove that the remaining two sides of the rectangle are also tangent to $\mathcal{C}$.

It remains to show that $\mathcal{C}$ is an ellipse.
Each degenerate conic which contains points $E$, $F$, $G$, $H$ is intersecting transversely the sides of the rectangle, thus $\mathcal{C}$ must be non-degenerate.
No two tangent lines of a parabola are parallel, thus $\mathcal{C}$ is either hyperbola or ellipse.
For a hyperbola, we can notice that a pair of parallel lines cannot touch the same branch, and that the touching points of the lines containing sides of the rectangle would be placed on the extensions of those sides, which does not correspond to the geometric arrangement that we got here.

We conclude that $\mathcal{C}$ is the unique ellipse within which parallelogram $EFGH$ is a closed billiard trajectory.
\end{proof}

\subsection{An analytic proof}\label{sec:analytic-parallelogram}
	
In the previous Section \ref{sec:parallel}, we gave a synthetic proof of Theorem \ref{th:parall}.
In this section, we provide another proof.
To prepare for that, we need the following three lemmas.
	
	\begin{lemma}\label{lem:OX} Consider an ellipse with foci $F_1$, $F_2$ and center $O$.
Let $M$ be a point on that ellipse, $t$ the line touching ellipse at $M$, $F_1'$ the point symmetric to $F_1$ with respect to $t$, and $X$ the projection of $F_1$ to $t$, as shown in Figure \ref{fig:OX}.
Then we have:
\begin{itemize}
	\item Lines $F_1'M$ and $OX$ are parallel and $OX=\dfrac{1}{2}F_1'F_2$.
	\item Therefore, the length of the segment $OX$ does not depend on the choice of point $M$ on the ellipse, since $d=F_1M+F_2M=F_1'F_2=2OX$, where $d$ is the defining ``rope length" for the ellipse.
	\item Moreover, the line contaning point $M$ which is orthogonal to $t$ and the line containing point $F_1$ which is parallel to $t$ intersect each other at a point on $OX$.
\end{itemize}
	\end{lemma}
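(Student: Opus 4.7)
The plan is to exploit the classical unfolding of the ellipse about the tangent line: by the first focal property (Proposition \ref{prop:focal.property}), the segments $MF_1$ and $MF_2$ satisfy the reflection law off $t$, so reflecting $F_1$ across $t$ places $F_1'$ on the line through $M$ and $F_2$, with $M$ lying between $F_1'$ and $F_2$. Hence $|F_1'F_2| = |F_1'M| + |MF_2| = |F_1M| + |MF_2| = d$, the rope length. This single observation is the engine driving all three bullets.

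For the first two bullets, observe that $X$ is the midpoint of $F_1F_1'$ (since $t$ is the perpendicular bisector of $F_1F_1'$) and $O$ is the midpoint of $F_1F_2$ by definition. Therefore, in triangle $F_1F_1'F_2$, the segment $OX$ joins midpoints of two sides and is the midsegment parallel to the third side $F_1'F_2$, with $|OX| = \tfrac{1}{2}|F_1'F_2| = \tfrac{d}{2}$. Because $F_1'F_2$ passes through $M$, the parallelism $OX \parallel F_1'M$ is immediate, and the length $|OX| = d/2$ depends only on the ellipse, not on the choice of $M$.

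For the third bullet, let $Y$ denote the intersection of the normal to $t$ at $M$ with the line through $F_1$ parallel to $t$. Then $F_1XMY$ is a rectangle, since $F_1X$ and $MY$ are both perpendicular to $t$ while $XM$ lies on $t$ and $F_1Y$ is parallel to $t$. From $\vec{F_1Y} = \vec{XM}$ one gets $Y = F_1 + M - X$. Substituting $X = \tfrac{1}{2}(F_1 + F_1')$ and $O = \tfrac{1}{2}(F_1 + F_2)$ yields $Y - O = M - \tfrac{1}{2}(F_1' + F_2)$, which is the vector from the midpoint of $F_1'F_2$ to $M$. Since $M$ lies on segment $F_1'F_2$, this vector is parallel to $F_1'F_2$, hence parallel to $OX$ by the first bullet. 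Therefore $Y$ lies on the line $OX$.

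There is no genuine obstacle here: the only place requiring care is the small vector computation in the third bullet, and conceptually the entire lemma is driven by recognizing the triangle $F_1F_1'F_2$ with $OX$ as its midsegment. Once the focal unfolding is in place, the rest is a matter of identifying midpoints and a single parallelogram.
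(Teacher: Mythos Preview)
Your proof is correct and follows essentially the same approach as the paper: both use the focal reflection property to see that $F_1'$, $M$, $F_2$ are collinear, then the midsegment in triangle $F_1F_1'F_2$ for the first two bullets, and the rectangle $F_1XMY$ (the paper calls your point $Y$ by $N$) for the third bullet. The only cosmetic difference is that for the third bullet the paper argues synthetically that $MNXF_1'$ is a parallelogram (since $MN=XF_1=XF_1'$ and $MN\parallel XF_1'$), hence $XN\parallel MF_1'\parallel OX$, while you reach the same conclusion via the vector identity $Y-O=M-\tfrac12(F_1'+F_2)$.
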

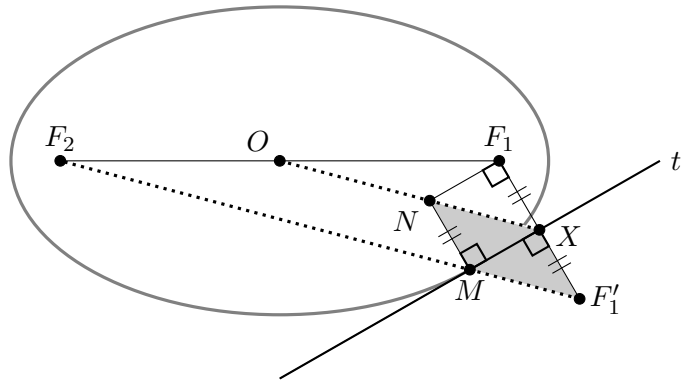
\begin{figure}[h]
	\centering
	\begin{tikzpicture}[]
		\coordinate (A) at (0,-2.88675);
		\coordinate (B) at (5,0);
			\coordinate (C) at (0,-2.88675);
		\coordinate (M) at (2.5,-1.44338);
		\coordinate (F1) at (2.88675,0);
		\coordinate (F2) at (-2.88675,0);
		\coordinate (O) at (0, 0);
		\coordinate(X) at (3.41506, -0.915064);
		\coordinate(N) at (1.97169, -0.528312);
		\coordinate(F1') at (3.94338, -1.83013);

		\draw[very thick,gray](0,0) ellipse (3.53553 and 2.04124);
			\draw[white,fill=gray!40,opacity=0.8](F1')--(X)--(N)--(M)--cycle;

		\draw[thick](A)--(B)node[right]{$t$};
		\draw(C)--(M);
		\draw(F2)--(F1)--(F1');
		\draw(F1)--(N)--(M);
		\draw[very thick,dotted](O)--(X);
		\draw[very thick,dotted](F1')--(F2);

		\draw[fill=black](M) circle (2pt) node[below]{$M$};
		\draw[fill=black](F1) circle (2pt) node[above]{$F_1$};
		\draw[fill=black](F2) circle (2pt) node[above]{$F_2$};
		\draw[fill=black](F1') circle (2pt) node[right]{$F_1'$};
		\draw[fill=black](O) circle (2pt) node[above left]{$O$};
		\draw[fill=black](X) circle (2pt);
		\node at (3.8, -1){$X$};			
		\draw[fill=black](N)circle(2pt)node[below left]{$N$};	
		
		\tkzMarkRightAngle[thick,draw=black](M,X,F1')	
		\tkzMarkRightAngle[thick,draw=black](N,F1,X)	
		\tkzMarkRightAngle[thick,draw=black](N,M,X)	
		
		\tkzMarkSegment[color=black,pos=.5,mark=||](M,N)
		\tkzMarkSegment[color=black,pos=.5,mark=||](F1,X)
		\tkzMarkSegment[color=black,pos=.5,mark=||](F1',X)		
	\end{tikzpicture}
	\caption{Lemma \ref{lem:OX}: the length of segment $OX$ does not depend on the choice of point $M$ on the ellipse.}\label{fig:OX}
\end{figure}
\begin{proof}
The first two statements are a direct application of the minimizing property of the billiard reflection (see Remark \ref{rem:min}), together with focal property (Proposition \ref{prop:focal.property}).

Denote by $N$ the intersection of two lines, as described in the last statement of the lemma.
Then $MNF_1X$ is a rectangle, thus $MN=XF_1$.
Since $X$ is the midpoint of $F_1F_1'$, we have $MN=XF_1'$.
Since segments $MN$ and $XF_1'$ are, in addition to that, parallel, quadrangle $MNXF_1'$ is a parallelogram.
Thus, $XN$ is parallel to the line $F_1'F_2$, so $N\in OX$.
\end{proof}

\begin{lemma}\label{lem:quadratics}
Suppose that $EFGH$ is a parallelogram which represents a periodic billiard trajectory within rectangle $ABCD$ and ellipse with centre $O$ and foci $F_1$, $F_2$, as shown in Figure \ref{fig:analysis}.
Denote the sides of the rectangle by $a=AB=CD$ and $b=BC=AD$, the distances of $F_1$ from $AD$, $AB$ by $x$, $y$ respectively, and set $e=AE$.
Then:
\begin{equation}\label{eq:quadratics}
	(a-2x)^2-(b-2y)^2=a^2-b^2
	\quad\text{and}\quad
	(a-2x)(b-2y)=b(a-2e).
\end{equation}
\begin{figure}[h]
	\begin{center}
		\begin{tikzpicture}[]
		\begin{scope}[yscale=-1,rotate=149.27]
			\coordinate (G) at (-1.75716, 0.985292);
			\coordinate (F) at (-2.31683, -0.45909);
			\coordinate (E) at (1.75716, -0.985292);
			\coordinate (H) at (2.31683, 0.45909);
			
			\coordinate (C) at (-2.80513,0.362314);
			\coordinate (B) at (-1.02193, -2.63736);
			\coordinate (A) at (2.80513,-0.362314);
			\coordinate (D) at (1.02193, 2.63736);
			
			\coordinate (O) at (0,0);
			\coordinate (F1) at (2,0);
			\coordinate (F2) at (-2,0);

			\draw[very thick,gray](0,0) ellipse (2.44949 and 1.41421);

			\draw[thick](E)--(F)--(G)--(H)--cycle;

			\draw[thick](A)--(B)--(C)--(D)--cycle;
			
			\draw[dashed](A)--(C);
			\draw[dashed](B)--(D);
			
					\draw[fill=black](A) circle (2pt) node[below left]{$A$};
						\draw[fill=black](B) circle (2pt) node[below right]{$B$};
						\draw[fill=black](C) circle (2pt) node[above right]{$C$};
						\draw[fill=black](D) circle (2pt) node[above left]{$D$};
						
					\draw[fill=black](E) circle (2pt) node[below]{$E$};
					\draw[fill=black](F) circle (2pt) node[right]{$F$};
					\draw[fill=black](G) circle (2pt) node[above]{$G$};
					\draw[fill=black](H) circle (2pt) node[left]{$H$};

\draw[fill=black](O) circle (2pt) node[below]{$O$};
\draw[fill=black](F1) circle (2pt) node[above]{$F_1$};
\draw[fill=black](F2) circle (2pt) node[below]{$F_2$};

\end{scope}			
		\end{tikzpicture}
		\caption{A convex $4$-periodic trajectory within ellipse and rectangle.}\label{fig:analysis}
	\end{center}
\end{figure}
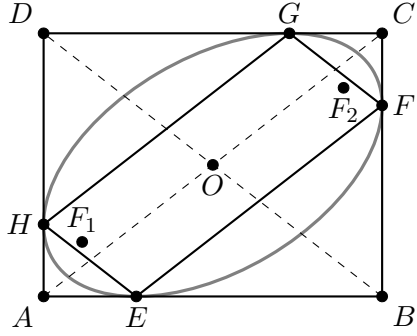
\end{lemma}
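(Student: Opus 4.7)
I would set up Cartesian coordinates with $A = (0,0)$, $B = (a, 0)$, $C = (a, b)$, $D = (0, b)$, so that $F_1 = (x, y)$ and, by central symmetry of the ellipse about the common center of the rectangle, $F_2 = (a-x, b-y)$ with $O = (a/2, b/2)$. By Lemma \ref{lemma:ellipse-rectangle}, the sides of $EFGH$ are parallel to the diagonals of the rectangle; starting from $E = (e, 0)$ and following these directions successively until they hit the next side of $ABCD$ gives
\[
F = \bigl(a,\; b(a-e)/a\bigr), \qquad G = (a-e,\; b), \qquad H = \bigl(0,\; be/a\bigr).
\]

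For the first identity in (\ref{eq:quadratics}), I would apply Lemma \ref{lem:OX}: the foot of the perpendicular from $F_1$ to any tangent of the ellipse lies on a circle centered at $O$ whose radius is the semi-major axis. Applying this to the tangent lines $AB$ and $BC$, the two feet are $(x, 0)$ and $(a, y)$, so
\[
\bigl(\tfrac{a}{2} - x\bigr)^2 + \bigl(\tfrac{b}{2}\bigr)^2 = \bigl(\tfrac{a}{2}\bigr)^2 + \bigl(\tfrac{b}{2} - y\bigr)^2.
\]
This collapses to $x(a-x) = y(b-y)$, which in turn rearranges into $(a - 2x)^2 - (b - 2y)^2 = a^2 - b^2$.

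For the second identity, I would use the focal reflection property (Proposition \ref{prop:focal.property}) at $E$. Since $AB$ is tangent to the ellipse at $E$, the mirror image $F_1^* = (x, -y)$ of $F_1$ in $AB$ must be collinear with $E = (e, 0)$ and $F_2 = (a - x, b - y)$. Writing down this collinearity (a single $2\times 2$ determinant equation) and simplifying yields $bx + ay - be = 2xy$. Multiplying by $2$ and adding $ab$ to both sides produces exactly $(a - 2x)(b - 2y) = b(a - 2e)$.

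I do not foresee a serious obstacle. The only care required is to identify the four reflection points in the chosen coordinates using Lemma \ref{lemma:ellipse-rectangle}; after that, the first identity follows from the auxiliary-circle form of Lemma \ref{lem:OX} applied to two adjacent sides of the rectangle, and the second from the standard focal reflection at a single vertex of the parallelogram.
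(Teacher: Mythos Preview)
Your proposal is correct. For the first identity it is essentially the paper's argument: the paper also invokes Lemma~\ref{lem:OX} to get that the feet of the perpendiculars from $F_1$ onto two sides of the rectangle are equidistant from $O$, and then reads off $(a-2x)^2-(b-2y)^2=a^2-b^2$ via Pythagoras. (The paper projects onto $AB$ and $AD$ rather than $AB$ and $BC$, which is an immaterial difference.) The computation of $F$, $G$, $H$ you carry out is not actually needed for either identity.

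For the second identity your route genuinely differs from the paper's. The paper uses the \emph{third} clause of Lemma~\ref{lem:OX}: the normal to the ellipse at $E$ and the line through $F_1$ parallel to $AB$ meet on the segment $OX$; naming that intersection $Y_1$ and setting $Y_2=F_1Y_1\cap OO_x$, the similarity of triangles $OY_1Y_2$ and $OXO_x$ gives $(b-2y)/b=(a-2e)/(a-2x)$. You instead bypass that auxiliary construction entirely and use only the focal reflection property (Proposition~\ref{prop:focal.property}): the reflection $F_1^\ast=(x,-y)$, the point $E=(e,0)$, and $F_2=(a-x,b-y)$ are collinear, and the $2\times2$ determinant condition yields $bx+ay-be=2xy$, equivalent to $(a-2x)(b-2y)=b(a-2e)$. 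Your approach is a little more direct in that it needs only the basic focal property rather than the refined third statement of Lemma~\ref{lem:OX}; the paper's approach has the advantage of being coordinate-free. Both are clean and short.
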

\begin{proof}
Denote by $X$, $Y$ and $O_x$, $O_y$ the projections of points $F_1$ and $O$ to $AB$, $AD$ respectively, see the lefthandside of Figure \ref{fig:projF1}.
We notice that $AX=x$, $AY=y$, $AO_x=\dfrac{a}2$, $AO_y=\dfrac{b}2$.
Since Lemma \ref{lem:OX} gives $OX=OY$, the first equality from \eqref{eq:quadratics} is obtained by applying Pythagora's theorem to triangles $OXO_x$ and $OYO_y$.

Now, denote by $Y_1$ the point on $OX$ such that the line $Y_1E$ is orthogonal to $AB$, see the righthandside of Figure \ref{fig:projF1}.
According to the third statement of Lemma \ref{lem:OX}, we have $F_1Y_1\parallel AB$.
Denote $Y_2=F_1Y_1\cap OO_x$.
Now, from the similarity of triangles $OY_1Y_2$ and $OXO_x$, we get the second relation of \eqref{eq:quadratics}.
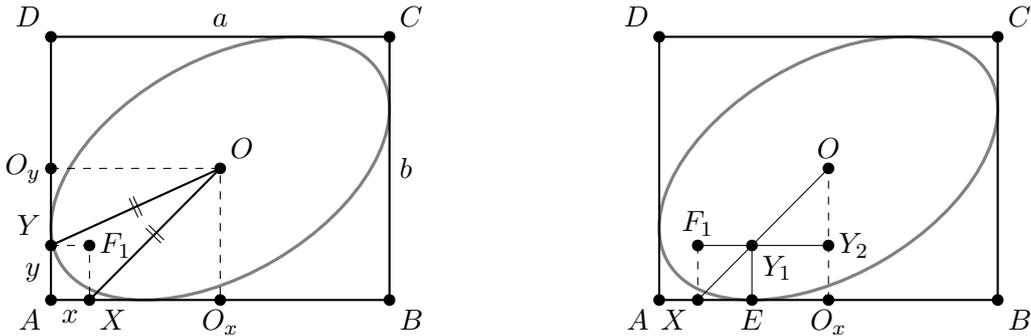
\begin{figure}[h]
	\centering
		\begin{tikzpicture}[]
			\begin{scope}[yscale=-1,rotate=149.27]
				\coordinate (E) at (-1.75716, 0.985292);
				\coordinate (F) at (-2.31683, -0.45909);
				\coordinate (G) at (1.75716, -0.985292);
				\coordinate (H) at (2.31683, 0.45909);
				
				\coordinate (C) at (-2.80513,0.362314);
				\coordinate (B) at (-1.02193, -2.63736);
				\coordinate (A) at (2.80513,-0.362314);
				\coordinate (D) at (1.02193, 2.63736);
				\coordinate (X) at (2.36937, -0.621356);
				\coordinate (Y) at (2.43576, 0.259042);
				
				\coordinate (O) at (0,0);
				\coordinate (F1) at (2,0);
				\coordinate (F2) at (-2,0);
				
				\coordinate (Ox) at (0.8916, -1.49984);
				\coordinate (Oy) at (1.91353, 1.13752);

				\draw[very thick,gray](0,0) ellipse (2.44949 and 1.41421);
				
				 \draw (B) to [edge node={node [right] {$b$}}] (C);
				 \draw (D) to [edge node={node [above] {$a$}}] (C);
				 
				 \draw (A) to [edge node={node [below] {$x$}}] (X);
				 
				 \draw (A) to [edge node={node [left] {$y$}}] (Y);

				\draw[thick](A)--(B)--(C)--(D)--cycle;
				
				\draw[black,dashed](Y)--(F1)--(X);
				\draw[black,dashed](Oy)--(O)--(Ox);
				
				\draw[black,thick](Y)--(O)--(X);

				\draw[fill=black](A) circle (2pt) node[below left]{$A$};
				\draw[fill=black](B) circle (2pt) node[below right]{$B$};
				\draw[fill=black](C) circle (2pt) node[above right]{$C$};
				\draw[fill=black](D) circle (2pt) node[above left]{$D$};

				\draw[fill=black](O) circle (2pt) node[above right]{$O$};
				\draw[fill=black](F1) circle (2pt) node[right]{$F_1$};
				\draw[fill=black](Ox) circle (2pt) node[below]{$O_x$};
				\draw[fill=black](Oy) circle (2pt) node[left]{$O_y$};
				\draw[fill=black](X) circle (2pt) node[below right]{$X$};
				\draw[fill=black](Y) circle (2pt) node[above left]{$Y$};
				
\tkzMarkSegment[color=black,pos=.5,mark=||](O,X)
\tkzMarkSegment[color=black,pos=.5,mark=||](O,Y)
				
				\end{scope}	
\begin{scope}[yscale=-1,shift={(8,0)},rotate=149.27]
	\coordinate (G) at (-1.75716, 0.985292);
	\coordinate (F) at (-2.31683, -0.45909);
	\coordinate (E) at (1.75716, -0.985292);
	\coordinate (H) at (2.31683, 0.45909);
	
	\coordinate (C) at (-2.80513,0.362314);
	\coordinate (B) at (-1.02193, -2.63736);
	\coordinate (A) at (2.80513,-0.362314);
	\coordinate (D) at (1.02193, 2.63736);
	\coordinate (X) at (2.36937, -0.621356);
	\coordinate (Y) at (2.43576, 0.259042);
	\coordinate (Y1) at (1.38779, -0.363942);
	\coordinate (Y2) at (0.522232, -0.878493);
	
	\coordinate (O) at (0,0);
	\coordinate (F1) at (2,0);
	\coordinate (F2) at (-2,0);
	\coordinate (F1') at (2.73874, -1.24271);
	
	\coordinate (Ox) at (0.8916, -1.49984);
	\coordinate (Oy) at (1.91353, 1.13752);

	\draw[very thick,gray](0,0) ellipse (2.44949 and 1.41421);

	\draw[thick](A)--(B)--(C)--(D)--cycle;
	
	\draw[black,dashed](F1)--(X);
	\draw[black,dashed](Ox)--(O);
	\draw[black](Y1)--(E);
	\draw[black](X)--(O);
	\draw[black](F1)--(Y2);

	\draw[fill=black](A) circle (2pt) node[below left]{$A$};
	\draw[fill=black](B) circle (2pt) node[below right]{$B$};
	\draw[fill=black](C) circle (2pt) node[above right]{$C$};
	\draw[fill=black](D) circle (2pt) node[above left]{$D$};

	\draw[fill=black](O) circle (2pt) node[above]{$O$};
	\draw[fill=black](F1) circle (2pt) node[above]{$F_1$};
	\draw[fill=black](Ox) circle (2pt) node[below]{$O_x$};
	\draw[fill=black](X) circle (2pt) node[below left]{$X$};
	\draw[fill=black](Y1) circle (2pt) node[below right]{$Y_1$};
	\draw[fill=black](Y2) circle (2pt) node[right]{$Y_2$};
	
	\draw[fill=black](E) circle (2pt) node[below]{$E$};
\end{scope}		
		\end{tikzpicture}
		\caption{The proof of Lemma \ref{lem:quadratics}.}\label{fig:projF1}

\end{figure}
\end{proof}

\begin{lemma}\label{lem:F1-exists}
Let $ABCD$ be a rectangle and $E$ a point on the side $AB$.
There is a point $F_1$ in the rectangle, such that it satisfies the relationships \eqref{eq:quadratics}, where $a$, $b$, $e$, $x$, $y$ are defined as in Lemma \ref{lem:quadratics}.

Moreover, point $F_1$ is unique up to the symmetry with respect to the center of the rectangle.
\end{lemma}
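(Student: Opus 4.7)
The plan is to reduce the two equations \eqref{eq:quadratics} to a single polynomial equation by introducing the symmetric coordinates
\[
u := a - 2x, \qquad v := b - 2y,
\]
which measure the displacement of $F_1$ from the centre of the rectangle along the sides. In these variables, the system becomes
\[
u^2 - v^2 = a^2 - b^2, \qquad uv = b(a - 2e).
\]
Setting $k := a^2 - b^2$ and $c := b(a-2e)$, I would eliminate $v = c/u$ (assuming $u \neq 0$; the case $u = 0$ is easily handled separately and only occurs when $c = 0$) and obtain the biquadratic
\[
u^4 - k\,u^2 - c^2 = 0.
\]
Viewing this as a quadratic in $u^2$, the two roots are $\tfrac{1}{2}\bigl(k \pm \sqrt{k^2 + 4c^2}\bigr)$, exactly one of which is nonnegative, namely $u_0^2 := \tfrac{1}{2}\bigl(k + \sqrt{k^2 + 4c^2}\bigr)$.

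This determines $u$ up to a sign, and then $v = c/u$. Thus the system has exactly the two solutions $(u_0, v_0)$ and $(-u_0, -v_0)$, which correspond to two choices of $F_1$ related by the central symmetry $(x,y) \mapsto (a-x, b-y)$ of the rectangle; this is the asserted uniqueness up to symmetry.

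It remains to verify that each such $F_1$ actually lies inside the rectangle, i.e.\ $u_0^2 \le a^2$ and $v_0^2 \le b^2$. Because $u^2 - v^2 = a^2 - b^2$, these two inequalities are equivalent, so it suffices to check $u_0^2 \le a^2$. Substituting the formula for $u_0^2$, squaring, and using $(a^2+b^2)^2 - (a^2-b^2)^2 = 4a^2b^2$, this reduces to $c^2 \le a^2 b^2$, i.e.\ $|a - 2e| \le a$, which holds automatically since $0 \le e \le a$. The only place where care is needed is the sign of $u_0^2$: the identity $\sqrt{k^2 + 4c^2} \ge |k|$ guarantees $u_0^2 \ge 0$, so no degenerate branch can arise. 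I do not anticipate any substantial obstacle; the only mildly subtle point is ensuring that both $(u_0,v_0)$ and $(-u_0,-v_0)$ produce admissible points, which is immediate from the symmetry of the two equations under the simultaneous sign change $(u,v)\mapsto(-u,-v)$.
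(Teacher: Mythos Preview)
Your proposal is correct and follows essentially the same route as the paper: both introduce the shifted variable $\xi=a-2x$ (your $u$), derive the biquadratic $\xi^4-(a^2-b^2)\xi^2-b^2(a-2e)^2=0$, solve the associated quadratic in $\xi^2$, and read off the two solutions related by the central symmetry of the rectangle. Your write-up is in fact slightly more complete than the paper's, since you explicitly verify $u_0^2\le a^2$ (equivalently $v_0^2\le b^2$), i.e.\ that the resulting $F_1$ actually lies in the rectangle; the paper simply asserts this without checking it.
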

\begin{proof}
From \eqref{eq:quadratics}, we get the following biquadratic relationship for $\xi=a-2x$:
$$
\xi^4-(a^2-b^2)\xi^2-b^2(a-2e)^2=0.
$$ 
The discriminant of the corresponding quadratic equation:
\begin{equation}\label{eq:quadeq}
\hat{\xi}^2-(a^2-b^2)\hat\xi-b^2(a-2e)^2=0
\end{equation}
is:
$$
d=(a^2-b^2)^2+4b^2(a-2e)^2.
$$
Notice that always $d\ge0$.

For $d=0$ we have $a=b=2e$, so the rectangle $ABCD$ is a square and we get from \eqref{eq:quadratics} that $F_1$ is its center.

For $d>0$, the equation \eqref{eq:quadeq} has two real solutions $\hat{\xi}_1$, $\hat{\xi}_2$, whose product equals $\hat{\xi}_1\hat{\xi}_2=-b^2(a-2e)^2\le0$.

If $\hat{\xi}_1\hat{\xi}_2=0$, we have $a-2e=0$, i.e.~$E$ is the midpoint of $AB$.
The second relationship of \eqref{eq:quadratics} then gives $a-2x=0$ or $b-2y=0$. 
From the first relationship of \eqref{eq:quadratics}, we can deduce that either $F_1$ is the center of the rectangle if $a=b$, or $x=\dfrac12(a\pm\sqrt{a^2-b^2})$, $y=\dfrac{b}2$ if $a>b$, or $x=\dfrac{a}2$, $y=\dfrac12(b\pm\sqrt{b^2-a^2})$ if $a<b$.

If $\hat{\xi}_1\hat{\xi}_2<0$, then exactly one of the solutions $\hat\xi_1$, $\hat\xi_2$ is positive, so suppose $\hat\xi_1>0$.
Then we have that point $F_1$ satisfies $x=\dfrac12\left(a\pm\sqrt{\hat\xi_1}\right)$, 
$y=\dfrac12\left(b\mp\dfrac{b(a-2e)}{\sqrt{\hat\xi_1}}\right)$.
The choice of signs here determines two points within the rectangle, which are symmetric with respect to its center.
\end{proof}

\begin{proof}[Analytic proof of Theorem \ref{th:parall}]
Let $EFGH$ be a given parallelogram, and $ABCD$ the rectangle determined by external bisectors of the parallelogram.
We assume that $E\in AB$.
According to Lemma \ref{lem:F1-exists}, there is a point $F_1$ in the rectangle which satisfies the relations \eqref{eq:quadratics}.
Denote by $F_2$ the point symmetric to $F_1$ with respect to the center $O$ of the rectangle.
Applying the reasoning from the proofs of Lemmas \ref{lem:OX}-\ref{lem:F1-exists}, the ellipse with foci $F_1$, $F_2$ which contains $E$ will be the unique ellipse
whithin which $EFGH$ represents a closed billiard trajectory.
\end{proof}

Thus, we have concluded the analytic proof of Theorem \ref{th:parall}. As an outcome, we get the exact formulas for the foci of the ellipse.

\section{Nonconvex $4$-periodic elliptical billiard trajectories}\label{sec:Darboux}

\subsection{Characterization of nonconvex $4$-periodic trajectories}\label{sec:4nonconvex}

Following \cite{BLPT}*{Definition 3.4}, we introduce the notion of a Darboux butterfly.

\begin{definition}
Any closed polygonal self-intersecting line $ABCD$, such that $AB\cong CD$ and $AD\cong BC$, is called \emph{a Darboux butterfly} (or \emph{a bow tie}).
See Figure \ref{fig:butterfly-trapezoid}.
\end{definition}
\begin{figure}[h]
	\centering
	\begin{tikzpicture}[]
		\coordinate (G) at (-2, 0);
		\coordinate (K) at (2, 0);
		\coordinate (H) at (1, 2);
		\coordinate (L) at (-1, 2);
		
		\draw[very thick,black](G)--(H)--(K)--(L)--cycle;

\draw[fill=black,black](G) circle (2pt) node[below left]{$A$};
\draw[fill=black,black](H) circle (2pt) node[above right]{$B$};
\draw[fill=black,black](K) circle (2pt) node[below right]{$C$};
\draw[fill=black,black](L) circle (2pt) node[above left]{$D$};

	\end{tikzpicture}
	\caption{A Darboux butterfly.}\label{fig:butterfly-trapezoid}
\end{figure}
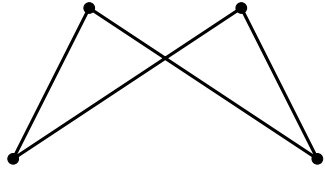

\begin{lemma} 
Every Darboux butterfly consists of legs and diagonals of an isosceles trapezoid.
\end{lemma}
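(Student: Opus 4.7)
The plan is to re-label the four vertices of the butterfly in the cyclic order $A, D, B, C$ and show that they are then the vertices of an isosceles trapezoid whose legs are $AD$ and $BC$ and whose diagonals are $AB$ and $DC$---precisely the four edges of the given Darboux butterfly.

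First, I will use the hypotheses $AB \cong CD$ and $AD \cong BC$ to locate $C$ as one of the intersection points of the circle of radius $|AD|$ centered at $B$ with the circle of radius $|AB|$ centered at $D$. There are two natural candidates: the image $\sigma(A)$ of $A$ under reflection across the perpendicular bisector $\ell$ of $BD$, and the image $\rho(A)$ of $A$ under the $180^\circ$ rotation about the midpoint $M$ of $BD$. Both isometries swap $B$ with $D$, so each sends $A$ to a point with the correct distances to $B$ and $D$; generically these are the only such points, whence $C \in \{\sigma(A), \rho(A)\}$.

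The key step is to use the self-intersection hypothesis to rule out $C = \rho(A)$. Indeed, if $C = \rho(A)$, then $M$ would be the common midpoint of $BD$ and of $AC$, so the diagonals of the polygon $ABCD$ would bisect each other, making $ABCD$ a parallelogram---hence a simple convex quadrilateral, contradicting self-intersection of $ABCD$. Therefore $C = \sigma(A)$, so $A$ and $C$ are mirror images across $\ell$; in particular $AC \perp \ell$, and since $BD \perp \ell$ by construction we conclude $AC \parallel BD$.

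Reading the vertices in the order $A, D, B, C$, the resulting quadrilateral has sides $AD, DB, BC, CA$: the pair $DB, CA$ is parallel (the bases), the pair $AD, BC$ is non-parallel and of equal length (the legs), and the diagonals are $AB$ and $DC$. This is precisely an isosceles trapezoid, whose legs and diagonals together make up the four segments of the Darboux butterfly. The main obstacle I expect is the clean exclusion of the parallelogram alternative $C = \rho(A)$ from the self-intersection hypothesis; one must also briefly handle the degenerate limits where $A$ lies on $\ell$ (i.e.\ $|AB| = |AD|$, so the two candidates collapse and the butterfly degenerates) and where $|AC| = |BD|$ (the trapezoid becomes a rectangle, still an isosceles trapezoid in the degenerate sense).
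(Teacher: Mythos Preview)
Your argument is correct and follows the same idea as the paper's proof: the equal-side hypotheses leave exactly two candidate positions for the fourth vertex, self-intersection rules out the parallelogram alternative, and the remaining reflection-symmetric configuration gives $AC\parallel BD$, whence the isosceles trapezoid. The paper phrases this via the SSS congruence $\triangle GHK\cong\triangle KLG$ together with the (tersely asserted) fact that self-intersection forces $H$ and $L$ to lie on the same side of $GK$; your packaging via the two isometries $\sigma$ and $\rho$ makes that dichotomy more explicit, but the geometric content is identical.

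One small bookkeeping point: your stated cyclic order $A,D,B,C$ for the trapezoid is only correct when $A$ lies on the $D$-side of $\ell$; if $A$ lies on the $B$-side, the simple-quadrilateral order is $A,B,D,C$ instead. Either way the four points are the vertices of an isosceles trapezoid whose legs and diagonals are exactly the four segments $AB,BC,CD,DA$ of the butterfly, so this does not affect the conclusion. Also, in the boundary case $A\in\ell$ the two candidates do not literally collapse: rather $\sigma(A)=A$ (degenerate) while $\rho(A)$ gives a parallelogram, confirming that no genuine Darboux butterfly has $|AB|=|AD|$.
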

\begin{proof}  
Let $GHKL$ be a Darboux butterfly. Then $\triangle GHK$ is congruent to $\triangle KLG$ and they are placed on the same side of the ``diagonal'' $GK$. Thus, $HL$ is parallel to $GK$, and $GH$ is congruent to $KL$.
\end{proof}

\begin{lemma} 
Every nonconvex $4$-periodic elliptical billiard trajectory has a hyperbola as a caustic and is a Darboux butterfly.
(See Figure \ref{fig:butterfly-ellipse}).
\end{lemma}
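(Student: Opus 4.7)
The proof has three parts: identifying the caustic, describing the combinatorial structure, and establishing the length congruences.

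First, by Proposition~\ref{prop:ellipse.caustic} the caustic is a confocal conic. If it were a confocal ellipse, Lemma~\ref{lemma:ellipse-rectangle} would force the trajectory to be a parallelogram, hence convex, contradicting the hypothesis. So the caustic is a confocal hyperbola $\mathcal H$.

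Second, every chord of $\mathcal E$ tangent to $\mathcal H$ crosses the focal segment $F_1F_2$, so the four reflection points alternate between the open upper and lower half-planes with respect to the major axis of $\mathcal E$. Labelling the vertices $A,B,C,D$ in traversal order with $A,C$ above and $B,D$ below, the quadrilateral $ABCD$ must be self-intersecting: comparison of the traversal order with the cyclic order of these four points around their convex hull shows that exactly one pair of opposite sides crosses, giving the combinatorial shape of a butterfly.

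Third, and most substantive, I would prove that the trajectory is invariant under the reflection $\sigma$ across the minor axis of $\mathcal E$, with $\sigma(A)=C$ and $\sigma(B)=D$. Since $\sigma$ preserves both $\mathcal E$ and $\mathcal H$, it permutes the family of $4$-periodic trajectories tangent to $\mathcal H$, which by Poncelet's closure theorem is one-dimensional. On the other hand, the $\sigma$-symmetric quadrilaterals inscribed in $\mathcal E$---those with vertex set $\{(\pm x_0,y_0),(\pm x_1,-y_1)\}$---whose two independent sides are tangent to $\mathcal H$ also form a one-dimensional family: a direct calculation using the tangency condition for a chord of $\mathcal E$ tangent to $\mathcal H$ shows that the two tangency equations become linearly dependent precisely when $(\mathcal E,\mathcal H)$ satisfies the 4-periodic Cayley condition. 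Both families are connected and one-dimensional subfamilies of the set of Poncelet 4-gons with caustic $\mathcal H$, and must therefore coincide. Consequently every such trajectory is $\sigma$-invariant, and applying the isometry $\sigma$ yields $AB\cong CD$ and $BC\cong DA$, completing the proof that $ABCD$ is a Darboux butterfly.

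The main obstacle is the coincidence claim in the third part: one has to verify that the two tangency conditions for a $\sigma$-symmetric configuration are linearly dependent exactly under the 4-periodic Cayley condition on the caustic. Concretely this is a short computation with the chord-tangency formula $b\cos u\cdot x+a\sin u\cdot y=ab\cos v$ for confocal conics, reducing both tangencies to a $2\times 2$ linear system in $(\sin^2 u,\sin^2 v)$ whose determinant vanishes exactly at the Cayley value of the caustic parameter. A conceptually cleaner alternative is to identify the second iterate of the Poncelet map with $\sigma$ by observing that it must be one of the three non-trivial 2-torsion involutions of the confocal system, and then to single out the reflection about the minor axis as the only one preserving the upper/lower alternation of the vertices.
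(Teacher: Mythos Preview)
Your first step (the caustic must be a hyperbola) is exactly the paper's argument. For the crucial symmetry step, the paper does not argue at all: it simply invokes \cite{DR2019}*{Theorem 4}, which states that a periodic trajectory with hyperbola as caustic is symmetric with respect to the minor axis of $\mathcal E$. From this the Darboux-butterfly structure is immediate. Your Part~2 is largely superfluous: any closed quadrilateral inscribed in a strictly convex curve is either convex or self-intersecting, so ``nonconvex'' already forces the butterfly combinatorics without analysing the up/down alternation.

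Your Part~3 aims to reprove the cited symmetry theorem, and here there are genuine gaps. The dimension-coincidence argument needs, besides the unperformed Cayley computation you flag, a verification that the $\sigma$-symmetric family $B$ is \emph{nonempty}; without at least one explicit $\sigma$-symmetric $4$-periodic trajectory (for instance the one through the endpoint $(0,b)$ of the minor axis), the clopen argument ``$B\subset A$, $\dim B=\dim A=1$, $A$ connected $\Rightarrow B=A$'' does not get off the ground. You also need $B$ to be open in $A$, which follows once the Cayley computation shows $B$ is a smooth $1$-manifold, but this too is only asserted. None of these gaps is fatal---each can be filled with a short calculation---but as written the proof is a sketch, and a much longer one than the paper's two-line citation.

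The ``conceptually cleaner alternative'' you mention at the end---identifying the half-period Poncelet shift with the unique confocal involution compatible with the up/down alternation---is essentially how the result in \cite{DR2019} is obtained, and would be the more efficient route if you want a self-contained argument. In that case you should drop the family-comparison approach entirely and argue directly that the square of the billiard map (a fixed-point-free involution on the trajectory) must be the reflection $\sigma$ about the minor axis, since the other nontrivial involutions of the confocal pair (reflection about the major axis, and the central symmetry) do not exchange the upper and lower arcs.
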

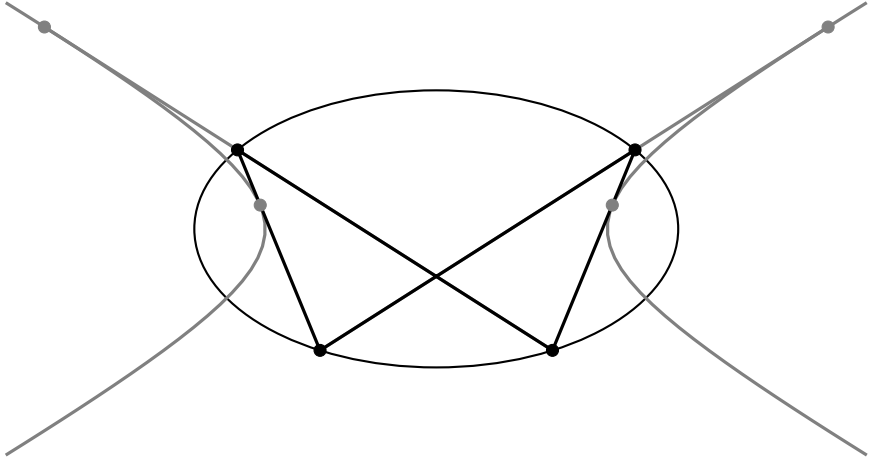
\begin{figure}[h]
	\centering
	\begin{tikzpicture}[scale=1.5]
		\coordinate (G) at (1.74284, 0.698212);
		\coordinate (H) at (1.01905, -1.07417);
		\coordinate (K) at (-1.74284, 0.698212);
		\coordinate (L) at (-1.01905, -1.07417);
		
		\coordinate (D1) at (1.54351,0.21011);
		\coordinate (D2) at (-1.54351,0.21011);
		\coordinate (D3) at (3.43604, 1.78478);
		\coordinate (D4) at (-3.43604, 1.78478);
		
		\draw[thick,black](0,0) ellipse (2.12132 and 1.22474);
		
		\draw [gray, very thick, 
		  domain=-2:2, samples=40] 
		plot ({1.5*sqrt(1+\x*\x/0.75)}, {\x} );
		
			\draw [gray, very thick,   domain=-2:2, samples=40] 
		plot ({-1.5*sqrt(1+\x*\x/0.75)}, {\x} );

		\draw[very thick,gray](L)--(D3);
\draw[very thick,gray](H)--(D4);	
		
		\draw[very thick,black](G)--(H)--(K)--(L)--cycle;

		\draw[fill=black,black](G) circle (1.5pt);
		\draw[fill=black,black](H) circle (1.5pt);
		\draw[fill=black,black](K) circle (1.5pt);
		\draw[fill=black,black](L) circle (1.5pt);
		\draw[fill=black,gray](D1) circle (1.5pt);
		\draw[fill=black,gray](D2) circle (1.5pt);
		\draw[fill=black,gray](D3) circle (1.5pt);
		\draw[fill=black,gray](D4) circle (1.5pt);
	\end{tikzpicture}
	\caption{A $4$-periodic elliptic billiard trajectory with a hyperbola as the caustic.}\label{fig:butterfly-ellipse}
	
\end{figure}

\begin{proof}
Since any $4$-periodic trajectory with ellipse as caustic is a parallelogram, it follows the caustic of a non-convex $4$-periodic trajectory must be a hyperbola.
Therefore the trajectory is symmetric with respect to the smaller axis of the boundary ellipse, see, for example, \cite{DR2019}*{Theorem 4}. 
Thus, its vertices will be the vertices of an isosceles trapezoid, while its segments are the diagonals and legs of that trapezoid.
\end{proof}	

\begin{remark}
From \cite{St}*{Theorem 1}, it follows that for each
$4$-periodic trajectory of an elliptical billiard, having a hyperbola as the caustic, there exists a $4$-periodic elliptical billiard trajectory having an ellipse as the caustic such that the corresponding sides of the two trajectories are congruent.
\end{remark}

Next, we show that every Darboux butterfly is a billiard trajectory  within a kite.

 \begin{lemma}\label{lem:deltoid} 
 	Every Darboux butterfly is a billiard trajectory within a unique kite, consisting of two congruent acute triangles.
 Moreover, the Darboux butterfly is the union of two $3$-periodic billiard trajectories within each of those congruent acute triangles.
\end{lemma}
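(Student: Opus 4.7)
The plan is to apply the exterior-bisector construction from the first proof of Theorem \ref{thm:triangles} separately to the two triangular lobes of the butterfly, and then show that the two resulting acute triangles glue along the axis of symmetry to form the desired kite. Let $ABCD$ be a Darboux butterfly. By the preceding lemma it consists of the legs and diagonals of an isosceles trapezoid, so it has an axis of symmetry $\ell$, and reflection in $\ell$ swaps $A \leftrightarrow C$ and $B \leftrightarrow D$. Let $X = AB \cap CD$ be the self-intersection point; then $X$ lies on $\ell$, and the butterfly decomposes into two triangular lobes $\triangle XBC$ and $\triangle XAD$, which are mirror images of each other in $\ell$.

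First I would recall the construction from the first proof of Theorem \ref{thm:triangles}: for any triangle $\triangle KLM$ the three exterior angle bisectors at $K, L, M$ bound an acute triangle whose orthic (pedal) triangle is $\triangle KLM$ itself, and $\triangle KLM$ is the unique $3$-periodic billiard trajectory in that acute triangle by Theorem \ref{th:acute}. Applied to $\triangle XBC$ this yields an acute triangle $T_R$, and applied to $\triangle XAD$ it yields $T_L$; by the mirror symmetry, $T_L$ is the reflection of $T_R$ in $\ell$. The key geometric step is to show that $T_R$ and $T_L$ share a full side lying along $\ell$. Since $A, X, B$ are collinear and $C, X, D$ are collinear, the ray $XA$ is the reverse of $XB$ and $XC$ is the reverse of $XD$; reflection in $\ell$ sends ray $XB$ to ray $XD$, i.e.\ to the reverse of $XC$. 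Thus the rays $XB$ and $XC$ are mirror images in the line through $X$ perpendicular to $\ell$, so the internal bisector of $\angle BXC$ is perpendicular to $\ell$ and the exterior bisector of $\angle BXC$ is exactly $\ell$. The same holds for $\angle AXD$. Moreover, the exterior bisectors at $B$ and $D$ are mirror images and hence meet $\ell$ at one common point, and similarly for the bisectors at $A$ and $C$; so the $\ell$-sides of $T_R$ and $T_L$ are the same segment.

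Gluing along this common segment, $T_R \cup T_L$ is a quadrilateral bounded by the four exterior bisectors of the butterfly at $A, B, C, D$, with the shared segment of $\ell$ as an interior diagonal. Being symmetric about $\ell$ with $\ell$ as a diagonal, it is a kite, split by $\ell$ into the two congruent acute triangles $T_R$ and $T_L$. The butterfly is a $4$-periodic billiard trajectory in this kite: at each vertex $V \in \{A, B, C, D\}$ the kite-side through $V$ coincides with the corresponding exterior bisector, and the reflection law at $V$ already holds as part of the $3$-periodic orthic trajectory in the appropriate half. The butterfly is then the union, as a set of segments, of the two $3$-periodic orthic trajectories $\triangle XBC$ in $T_R$ and $\triangle XAD$ in $T_L$, once the collinear pairs $XA \cup XB = AB$ and $XC \cup XD = CD$ are merged. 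Uniqueness of the kite is forced: any quadrilateral in which $ABCD$ is a billiard trajectory must have each of its four sides equal to the exterior angle bisector of the butterfly at one of $A, B, C, D$, and these four lines are determined.

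The main obstacle I anticipate is confirming that the triangle formed by the three exterior bisectors of an arbitrary triangle is always acute with the prescribed orthic triangle; this is used implicitly in the first proof of Theorem \ref{thm:triangles} but deserves to be made explicit. The short check is that if the inner triangle has angles $\alpha, \beta, \gamma$, then the outer exterior-bisector triangle has angles $\frac{\pi - \alpha}{2}, \frac{\pi - \beta}{2}, \frac{\pi - \gamma}{2}$, each strictly between $0$ and $\frac{\pi}{2}$. Once this is in hand, the remainder is symmetry bookkeeping.
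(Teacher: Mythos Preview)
Your proof is correct, but the route differs from the paper's. The paper builds the kite top-down: it simply declares the kite to be the quadrilateral bounded by the four exterior bisectors of the butterfly, observes that the axis of symmetry is a diagonal, and then proves acuteness by a local angle chase in the quadrangle formed by two adjacent kite-sides and the two corresponding interior bisectors (that quadrangle has two right angles and one obtuse angle, forcing the remaining kite-angle to be acute). You instead work bottom-up: you split the butterfly at the self-intersection $X$ into $\triangle XBC$ and $\triangle XAD$, apply the exterior-bisector construction of Theorem~\ref{thm:triangles} to each lobe separately, and then verify that the two resulting acute triangles share exactly the same segment along the axis $\ell$ so that they glue to a kite.

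Your approach buys you acuteness almost for free via the clean angle formula $\big(\tfrac{\pi-\alpha}{2},\tfrac{\pi-\beta}{2},\tfrac{\pi-\gamma}{2}\big)$, and the statement about the butterfly being a union of two $3$-periodic orthic trajectories is then literally built into the construction rather than deduced afterwards. The cost is the extra bookkeeping you flagged: checking that the exterior bisector of $\angle BXC$ is $\ell$ itself, and that the exterior bisectors at $B,D$ (resp.\ $A,C$) meet $\ell$ at the same point so the $\ell$-edges of $T_R$ and $T_L$ coincide. You handled both correctly. The paper's version avoids that gluing check at the price of a more ad hoc acuteness argument and a less explicit link back to Theorem~\ref{th:acute}.
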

\begin{proof}
The edges of the kite are the bisectors of the exterior angles of the Darboux butterfly, see Figure \ref{fig:butterfly-kite}.
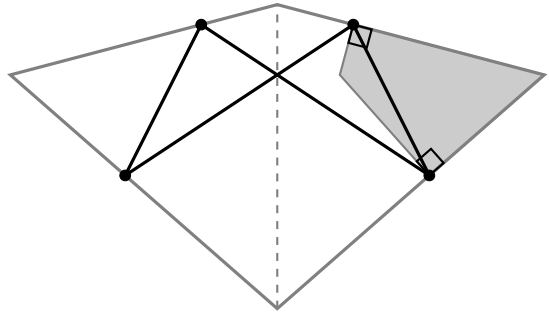
\begin{figure}[h]
	\centering
	\begin{tikzpicture}[]
		\coordinate (G) at (-2, 0);
		\coordinate (K) at (2, 0);
		\coordinate (H) at (1, 2);
		\coordinate (L) at (-1, 2);
		
		\coordinate (A) at (0., -1.76556);
		\coordinate (D) at (-3.51038, 1.33333);
		\coordinate (B) at (3.51038, 1.33333);
		\coordinate (C) at (0,2.26556);
		
		\coordinate (S) at (0.822957, 1.33333);

		\draw[thick,gray,fill=gray!40](S)--(K)--(B)--(H)--cycle;
		
		\draw[very thick,gray](A)--(B)--(C)--(D)--cycle;
		\draw[thick,gray,dashed](A)--(C);
		
		\draw[very thick](G)--(H)--(K)--(L)--cycle;

		\draw[fill=black](G) circle (2pt);
		\draw[fill=black](H) circle (2pt);
		\draw[fill=black](K) circle (2pt);
		\draw[fill=black](L) circle (2pt);
		
		\tkzMarkRightAngle[thick,draw=black](S,K,B)
		\tkzMarkRightAngle[thick,draw=black](S,H,B)

	\end{tikzpicture}
	\caption{The Darboux butterfly is a billiard trajectory within a kite.}\label{fig:butterfly-kite}
\end{figure}
One of the diagonals of the kite, represented by a dashed line in the figure, is the axis of symmetry, and it divides the kite into two congruent triangles.

Consider a shaded quadrangle in Figure \ref{fig:butterfly-kite}, which is determined by two sides of the kite and the bisectors of the corresponding interior angles of the butterfly.
That quadrangle has right angles at the joint vertices with the butterfly, while the angle at the vertex within the butterfly is obtuse.
Thus, the angle of the kite at the vertex which is outside the butterfly must be acute.

Finally, since the kite and the Darboux butterfly are both symmetric with respect to the diagonal of the kite, one can immediately see that the diagonal cuts the Darboux butterfly into two $3$-periodic billiard trajectories within the triangles.
\end{proof}

\begin{lemma}\label{lem:deltoid2} 
For every kite consisting of two congruent acute triangles there is a unique Darboux butterfly which is a $4$-periodic billiard trajectory within the kite. 
This trajectory is obtained as the concatenation $3$-periodic billiard trajectories within each of the two acute triangles.
\end{lemma}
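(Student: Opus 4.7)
The plan is to build the claimed Darboux butterfly by joining the unique $3$-periodic (orthic) trajectories in the two congruent acute half-triangles of the kite, and to get uniqueness from the kite's reflective symmetry. Denote the kite by $ABCD$ with axis of symmetry $AC$, so that the reflection $\sigma$ across $AC$ interchanges the two congruent acute triangles $\triangle ABC$ and $\triangle ACD$. By Theorem~\ref{th:acute}, the orthic triangle $KHP$ of $\triangle ABC$ (with $K\in AB$, $H\in BC$, $P\in AC$) is its unique $3$-periodic billiard trajectory, and applying $\sigma$ gives the corresponding orthic triangle of $\triangle ACD$ with vertices $P$, $G=\sigma(K)\in AD$, and $L=\sigma(H)\in CD$.

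The core geometric fact I would establish next is that the triples $K,P,L$ and $H,P,G$ are collinear. This is essentially a one-line argument: at the vertex $P$ of the orthic triangle in $\triangle ABC$, the segment $PH$ is the mirror image of the segment $PK$ across $AC$ (that is the billiard reflection law at $P$), but $\sigma$ sends $PH$ to $PL$; consequently $PL$ is the straight continuation of $KP$ past $P$, and symmetrically $PG$ is the continuation of $HP$ past $P$. From this, the closed polygonal line $GHKL$ has its two edges $GH$ and $KL$ crossing transversally at $P$, is symmetric across $AC$ (so $GH\cong KL$ and $HK\cong LG$), and hence is a Darboux butterfly. The billiard reflection law at each of its four vertices is inherited from the orthic trajectories: for instance, at $K\in AB$ the incoming edge $HK$ and the outgoing edge $KL$ agree, near $K$, with the orthic segments $HK$ and $KP$, so the reflection law off $AB$ is exactly the orthic one; the other three vertices are handled by the same argument or by $\sigma$-symmetry.

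For uniqueness, I would exploit the fact that a Darboux butterfly, being the union of the legs and diagonals of an isosceles trapezoid, has its own axis of reflective symmetry. If such a butterfly is a billiard trajectory inside the kite, then its symmetry axis maps its four bouncing points to bouncing points, hence preserves the set of four lines carrying the kite's sides, and so must coincide with the (unique) axis of symmetry of the kite, namely $AC$. Symmetry across $AC$ then forces the crossing point of the two self-intersecting edges to lie on $AC$; restricting to either half-triangle produces a three-vertex closed path that satisfies the billiard reflection law at its two vertices on the kite's sides and, by symmetry, also at the third vertex on $AC$. By Theorem~\ref{th:acute}, this restricted path must be the orthic trajectory of that half-triangle, which pins the Darboux butterfly to the one constructed above. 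The main obstacle, and the place where I expect the argument to need the most care, is the collinearity step $K,P,L$: it is vital that extending $KP$ past $P$ lands on $L=\sigma(H)$ and not on $\sigma(K)=G$, and it is precisely the billiard reflection law at $P$ that guarantees this; this mild unfolding idea is what glues the two orthic triangles into a single trajectory of the kite.
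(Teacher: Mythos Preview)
Your proposal is correct and rests on the same idea as the paper's proof: reduce everything to the uniqueness of the $3$-periodic (orthic) trajectory in an acute triangle (Theorem~\ref{th:acute}). The paper's proof is a single sentence invoking that uniqueness, implicitly through Lemma~\ref{lem:deltoid}, which already records that any Darboux butterfly splits along the kite's axis into two $3$-periodic trajectories in the half-triangles; you instead spell out the gluing via the collinearity step and give a self-contained axis-of-symmetry argument for uniqueness. One small caveat on your uniqueness paragraph: the assertion that the butterfly's axis ``must coincide with the (unique) axis of symmetry of the kite'' tacitly assumes the kite is not a rhombus. For a rhombus both diagonals are reflection axes of the quadrilateral, so your argument that the butterfly's axis permutes the four side-lines does not by itself pin it to $AC$; you would also need to exclude the other diagonal, which is immediate since that diagonal cuts the rhombus into two \emph{obtuse} triangles, and the angle computation in the proof of Theorem~\ref{th:acute} shows those admit no $3$-periodic billiard trajectory.
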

\begin{proof}
Follows from the uniqueness of such triangular billiard trajectories, see Theorem \ref{th:acute}.
\end{proof}

\begin{remark} 
In addition to the unique Darboux butterfly from Lemma \ref{lem:deltoid2}, each kite consisting of two congruent acute triangles has an infinite family of $4$-periodic billiard trajectories.
The segments of such trajectories are parallel to the segments of the Darboux butterfly, see the left-hand side of Figure \ref{fig:4-periodic-kite}.

There is also an infinite family of $4$-periodic billiard trajectories within a kite consisting of two congruent right triangles.
Such trajectories are all isosceles trapezoids, see the right-hand side of Figure \ref{fig:4-periodic-kite}.
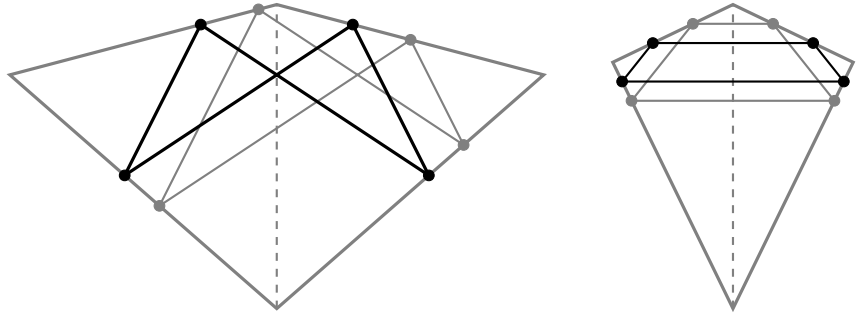
\begin{figure}[h]
	\centering
	\begin{tikzpicture}[]
		\coordinate (G) at (-2, 0);
		\coordinate (K) at (2, 0);
		\coordinate (H) at (1, 2);
		\coordinate (L) at (-1, 2);
		
		\coordinate (A) at (0., -1.76556);
		\coordinate (D) at (-3.51038, 1.33333);
		\coordinate (B) at (3.51038, 1.33333);
		\coordinate (C) at (0,2.26556);
		
		\coordinate(K1) at (2.45727, 0.403663);
		\coordinate(H1) at (1.76001, 1.79817);
		\coordinate(L1) at (-0.239983, 2.20183);
		\coordinate (G1) at (-1.54273, -0.403668);
		
		\coordinate (S) at (0.822957, 1.33333);

		\draw[very thick,gray](A)--(B)--(C)--(D)--cycle;
		
		\draw[thick,gray](H1)--(K1)--(L1)--(G1)--cycle;
		
		\draw[thick,gray,dashed](A)--(C);
		
		\draw[very thick](G)--(H)--(K)--(L)--cycle;

		\draw[fill=black](G) circle (2pt);
		\draw[fill=black](H) circle (2pt);
		\draw[fill=black](K) circle (2pt);
		\draw[fill=black](L) circle (2pt);
		
		\draw[gray,fill=gray](K1) circle (2pt);
		\draw[gray,fill=gray](H1) circle (2pt);
		\draw[gray,fill=gray](L1) circle (2pt);
		\draw[gray,fill=gray](G1) circle (2pt);
		
		\begin{scope}[shift={(6,0)}]
			\coordinate (A1) at (0., -1.76556);
			\coordinate (D1) at (-1.58113, 1.5);
			\coordinate (B1) at (1.58113, 1.5);
			\coordinate (C1) at (0,2.26556);
			
			\coordinate (T1) at (1.05409, 1.75519);
			\coordinate (T2) at (-1.05409, 1.75519);
			\coordinate (T3) at (1.45758, 1.24481);
			\coordinate (T4) at (-1.45758, 1.24481);
			
			\coordinate (P1) at (0.527044, 2.01037);
			\coordinate (P2) at (-0.527044, 2.01037);
			\coordinate (P3) at (1.33402, 0.989627);
			\coordinate (P4) at (-1.33402, 0.989627);
			
			\draw[very thick,gray](A1)--(B1)--(C1)--(D1)--cycle;
			\draw[thick,gray,dashed](A1)--(C1);
			
			\draw[gray,thick](P1)--(P2)--(P4)--(P3)--cycle;
			
			\draw[thick](T1)--(T2)--(T4)--(T3)--cycle;

			\draw[fill=black](T1) circle (2pt);
			\draw[fill=black](T2) circle (2pt);
			\draw[fill=black](T3) circle (2pt);
			\draw[fill=black](T4) circle (2pt);
			
			\draw[gray,fill=gray](P1) circle (2pt);
			\draw[gray,fill=gray](P2) circle (2pt);
			\draw[gray,fill=gray](P3) circle (2pt);
			\draw[gray,fill=gray](P4) circle (2pt);

		\end{scope}

	\end{tikzpicture}
	\caption{$4$-periodic trajectories within kites. The kite on the left consists of two acute triangles, and the one on the right consits of two right triangles.}\label{fig:4-periodic-kite}
	
\end{figure}
\end{remark}

Now, we are ready to prove that each Darboux butterfly is an elliptic billiard trajectory.
	\begin{theorem}\label{th:each-butterfly}
		For each Darboux butterfly, there is a unique ellipse, such that the butterfly is a $4$-periodic billiard trajectory within that ellipse.
	\end{theorem}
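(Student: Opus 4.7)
The plan is to imitate the synthetic proof of Theorem~\ref{th:parall}: produce the candidate conic through the four vertices of the butterfly with one prescribed tangent line, then verify via Pascal's theorem that its tangents at the remaining three vertices coincide with the three remaining sides of the containing kite.

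First, attach to the butterfly $GHKL$ the kite $ABCD$ provided by Lemma~\ref{lem:deltoid}, so that the sides $DA, AB, BC, CD$ are the bisectors of the exterior angles of the butterfly at $G, K, H, L$ respectively, and $A, C$ lie on the axis of symmetry $AC$ of the configuration. Set $S := GH \cap KL$ and $U := HK \cap GL$; both lie on $AC$ by the mirror-symmetry. By the same lemma, the half-butterflies $SHK$ and $SLG$ are the orthic triangles of the acute triangles $\triangle ABC$ and $\triangle ACD$, hence $S$ is the foot of the altitude from $B$ in $\triangle ABC$ and the foot of the altitude from $D$ in $\triangle ACD$; consequently $BS \perp AC$ and $DS \perp AC$.

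Let $\mathcal{C}$ be the non-degenerate conic through $G, H, K, L$ tangent to line $DA$ at $G$; it is uniquely determined, since four points plus one tangent-at-a-point are five conditions on a conic, and non-degeneracy is automatic because no three of $G, H, K, L$ are collinear. I claim that $\mathcal{C}$ is also tangent to $AB$ at $K$, to $BC$ at $H$, and to $CD$ at $L$, which I will verify by three applications of Corollary~\ref{cor:pascal1}. Applied to the ordering $G, H, K, L$: the intersection of the tangents to $\mathcal{C}$ at $G$ and $K$ is collinear with $S$ and $U$, hence lies on $AC$; since the tangent at $G$ is $DA$ and meets $AC$ at $A$, this intersection is $A$, so the tangent at $K$ passes through both $A$ and $K$ and is therefore the line $AB$. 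Applied to the ordering $K, G, H, L$: the chords $KG$ and $HL$ are perpendicular to $AC$ by the mirror-symmetry, so $KG \cap HL$ is the point at infinity perpendicular to $AC$, and the corollary places the intersection of the tangents at $K$ and $H$ on the perpendicular to $AC$ through $S$; since $B$ lies on that perpendicular (by $BS \perp AC$) and on the tangent at $K$ (which is $AB$), the intersection is $B$, so the tangent at $H$ passes through $B$ and $H$, giving the line $BC$. The ordering $G, K, L, H$ together with $DS \perp AC$ analogously forces the tangent at $L$ to be $CD$.

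Once these four tangent identifications are in place, $\mathcal{C}$ is a conic inscribed in the convex kite with interior tangent points at the four vertices of the butterfly; such a conic is necessarily an ellipse, and its reflection law at $G, H, K, L$ coincides with the reflections off the kite's sides, making $GHKL$ a $4$-periodic elliptical billiard trajectory within $\mathcal{C}$. Uniqueness is automatic, because any other qualifying ellipse shares with $\mathcal{C}$ both the four points and the tangent at $G$. I expect the main technical step to be the synthetic identity $BS, DS \perp AC$; once the orthic-triangle interpretation of the half-butterflies is noted, it reduces to the standard altitude geometry of acute triangles.
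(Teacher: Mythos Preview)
Your proof is correct, and it is genuinely different from the paper's.  Both arguments start identically: attach the kite $ABCD$ from Lemma~\ref{lem:deltoid}, define $\mathcal{C}$ as the conic through the four vertices with the tangent at $G$ prescribed, and apply Corollary~\ref{cor:pascal1} to the ordering $G,H,K,L$ to force the tangent at the mirror-image vertex $K$ to be the opposite kite side.  From that point the two proofs diverge.  The paper stops after this single Pascal application: once the tangents at the symmetric pair $G,K$ are known, $\mathcal{C}$ itself is symmetric in the axis $AC$, so the billiard reflection law holds at $G$ and $K$; then Proposition~\ref{prop:ellipse.caustic} produces a confocal caustic for the two segments through $G$ and another for the two through $K$, and symmetry forces these caustics to coincide, so all four sides touch one confocal conic and the butterfly is a Poncelet polygon, hence a billiard trajectory.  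You instead run Pascal two more times, on the orderings $K,G,H,L$ and $G,K,L,H$, exploiting the orthic-triangle identity $BS\perp AC$, $DS\perp AC$ (so that the relevant Pascal line is the kite diagonal $BD$) to pin down the tangents at $H$ and $L$ directly; the reflection law at all four vertices is then immediate without any appeal to caustics or confocal conics.  Your route trades the caustic machinery for an elementary altitude fact and two extra Pascal applications, which makes it more self-contained; the paper's route is shorter but leans on Proposition~\ref{prop:ellipse.caustic}.
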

	\begin{proof}
		Let $GHKL$ be a given Darboux butterfly.
According to Lemma \ref{lem:deltoid2}, there is a kite $ABCD$, whithin which the butterfly is a periodic billiard trajectory, see Figure \ref{fig:butterfly-construction}.
		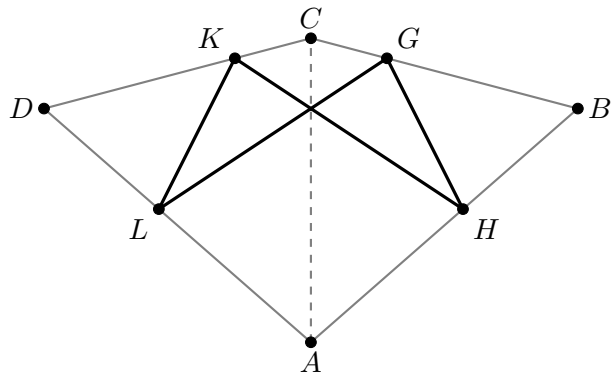
\begin{figure}[h]
			\centering
			\begin{tikzpicture}[]
				\coordinate (L) at (-2, 0);
				\coordinate (H) at (2, 0);
				\coordinate (G) at (1, 2);
				\coordinate (K) at (-1, 2);
				
				\coordinate (A) at (0., -1.76556);
				\coordinate (D) at (-3.51038, 1.33333);
				\coordinate (B) at (3.51038, 1.33333);
				\coordinate (C) at (0,2.26556);
				
				\coordinate (S) at (0.822957, 1.33333);

				\draw[thick,gray,dashed](A)--(C);
				
				\draw[very thick](G)--(H)--(K)--(L)--cycle;
				
				\draw[thick,gray](A)--(B)--(C)--(D)--cycle;

				\draw[fill=black](G) circle (2pt) node[above right]{$G$};
				\draw[fill=black](H) circle (2pt) node [below right]{$H$};
				\draw[fill=black](K) circle (2pt) node[above left]{$K$};
				\draw[fill=black](L) circle (2pt) node[below left]{$L$};
				
				\draw[fill=black](A) circle (2pt) node[below]{$A$};
				\draw[fill=black](B) circle (2pt) node [right]{$B$};
				\draw[fill=black](C) circle (2pt) node[above]{$C$};
				\draw[fill=black](D) circle (2pt) node[left]{$D$};

			\end{tikzpicture}
			\caption{The Darboux butterfly is a billiard trajectory within a kite.}\label{fig:butterfly-construction}
		\end{figure}
Let $\mathcal{C}$ be the unique conic circumscribed about $GHKL$ and touching the line $BC$ at point $G$.	

Applying Pascal's theorem to $\mathcal{C}$ and its points $G$, $G$, $H$, $K$, $K$, $L$, (see Corollary \ref{cor:pascal1}) we get that the intersection points of the following pairs of lines are collinear:
$BC$ and $k$; $GH$ and $KL$; $HK$ and $GL$,
where $k$ is the tangent line to $\mathcal{C}$ at $K$.
Since $GH\cap KL$ and $HK\cap GL$ belong to the line $AC$, we will have that $k\cap BC\in AC$, i.e.~$C\in k$, which gives that $CD$ is tangent to $\mathcal{C}$ at $K$.

This implies that $\mathcal{C}$ is symmetric with respect to the line $AC$ and that the pairs of segments $KL$, $KH$ and $GL$, $GH$ satisfy the billiard reflection law off $\mathcal{C}$ at points $K$, $G$ respectively.
From there, there are conics $\mathcal{C}'$, $\mathcal{C}''$ which are confocal with $\mathcal{C}$, each touching one of those two pairs of segments.

Since $\mathcal{C}$ is symmetric with respect to $AC$, the same must be true for $\mathcal{C}'$ and $\mathcal{C}''$. 
Moreover, since the pairs of segments $KL$, $KH$ and $GL$, $GH$ are also symmetric to each other with respect to $AC$, that will imply $\mathcal{C}'=\mathcal{C}''$.
Therefore, the Darboux butterfly $GHKL$ is inscribed in $\mathcal{C}$ and circumscribed about a conic confocal to $\mathcal{C}$, so it will represent a billiard trajectory within $\mathcal{C}$.

Finally, $\mathcal{C}$ must be an ellipse, since it is touching the sides of the kite $ABCD$ in inner points.
	\end{proof}

\begin{remark}
Notice that, from the uniqueness of the ellipse in Theorem \ref{th:each-butterfly}, it follows that it must be symmetric with respect to the axis of symmetry of the Darboux butterfly.
\end{remark}

\subsection{Analytic approach}\label{sec:Darboux-analytic}

Theorem \ref{th:each-butterfly} was formulated and proved in the classical geometric fashion in Section \ref{sec:4nonconvex}.
Now we provide an analytic proof for that theorem.

\begin{lemma}\label{lem:triangle-focus}
Consider an acute triangle $ABC$. 
Let $G$ and $H$ be the footings of its altitudes from $A$ and $C$ respectively. 
Suppose that $\mathcal E$ is an ellipse whose foci are symmetric with respect to $AC$ such that it is touching $AB$ and $BC$ at $H$ and $G$ respectively. 
Denote by $F_1$ the focus of $\mathcal{E}$ which is placed within $\triangle ABC$, and by $O$, $X$, $Y$, $Z$, $W$ the projections of $F_1$ to $AC$, $AB$, $BC$, $AG$, $CH$ respectively, as shown in Figure \ref{fig:acute-triangle}.

Then, the points $O$, $Z$, $Y$ are collinear as well as the points
$O$, $W$, $X$. 
Moreover, $OX=OY$.
\end{lemma}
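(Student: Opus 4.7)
The plan is to reduce the statement to Lemma \ref{lem:OX} combined with the midsegment theorem applied to the two triangles $F_1GF_2$ and $F_1HF_2$.

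First I would verify that the point $O$ in the statement, defined as the foot of the perpendicular from $F_1$ to $AC$, coincides with the center of $\mathcal{E}$. Since $F_1$ and $F_2$ are symmetric with respect to $AC$, the line $AC$ is the perpendicular bisector of $F_1F_2$, so the midpoint of $F_1F_2$ (i.e.\ the center of $\mathcal{E}$) lies on $AC$ and equals the projection of $F_1$ onto $AC$. This identification is what allows Lemma \ref{lem:OX} to be applied. At the same time, because $AG \perp BC$ at the tangency point $G$, the altitude $AG$ is the normal to $\mathcal{E}$ at $G$; similarly $CH$ is the normal at $H$. This explains why the altitudes enter the picture.

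Next I would exhibit the key rectangles. The four points $F_1, Y, G, Z$ form a rectangle because $F_1Y \perp BC$, $F_1Z \perp AG$, and $AG \perp BC$; its diagonals $F_1G$ and $YZ$ share a common midpoint $M_1$. Applying Lemma \ref{lem:OX} at the tangent point $M=G$ gives $OY \parallel GF_2$. Independently, the midsegment theorem in triangle $F_1GF_2$, with $O$ and $M_1$ the midpoints of the sides $F_1F_2$ and $F_1G$, yields $OM_1 \parallel GF_2$. Hence $O, M_1, Y$ all lie on the unique line through $O$ parallel to $GF_2$; and since $M_1$ is the midpoint of $YZ$, the point $Z$ lies on this same line. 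Therefore $O, Z, Y$ are collinear. Running the mirror argument with the rectangle $F_1XHW$, whose diagonal midpoint is the midpoint of $F_1H$, and with tangent point $M=H$, produces the collinearity of $O, W, X$.

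The equality $OX = OY$ is then immediate from the second bullet of Lemma \ref{lem:OX}: both lengths equal half the major axis of $\mathcal{E}$, independently of the chosen tangent point. I do not expect any serious obstacle here; the argument is a clean, almost mechanical combination of Lemma \ref{lem:OX} with the midsegment theorem. The only fine point is the non\-degeneracy of the two rectangles, which is automatic from $F_1$ being strictly interior to the acute triangle $ABC$ and not lying on either altitude.
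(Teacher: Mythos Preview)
Your proof is correct and follows essentially the same path as the paper's, which simply reads ``A direct application of Lemma~\ref{lem:OX}.'' The only difference is cosmetic: you re-derive the content of the third bullet of Lemma~\ref{lem:OX} via the rectangle $F_1YGZ$ and the midsegment theorem, whereas one can invoke that bullet directly after recognising that $Z$ is precisely the intersection of the normal to $\mathcal{E}$ at $G$ (the altitude $AG$) with the line through $F_1$ parallel to the tangent $BC$.
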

\begin{figure}[h]
	\centering
	\begin{tikzpicture}[]
		\coordinate (A) at (0,-2.88675);
		\coordinate (B) at (5,0);
		\coordinate (C) at (0,2.88675);
		\coordinate (G) at (2.5,1.44338);
		\coordinate (H) at (2.5,-1.44338);
		\coordinate (F1) at (2.88675,0);
		\coordinate (F2) at (-2.88675,0);
		\coordinate (O) at (0, 0);
		\coordinate(X) at (3.41506, -0.915064);
		\coordinate(W) at (1.97169, -0.528312);
		\coordinate(Y) at (3.41506, 0.915064);
		\coordinate(Z) at (1.97169, 0.528312);

		\draw[very thick,gray](0,0) ellipse (3.53553 and 2.04124);

		\draw[very thick](A)--(B)--(C)--cycle;
		\draw(A)--(G);
		\draw(C)--(H);
		\draw(O)--(F1)--(X);
		\draw(Z)--(F1)--(Y);
		\draw(F1)--(W);
		\draw[very thick,dotted](Y)--(O)--(X);
		
		\draw[fill=black](A) circle (2pt) node[below]{$A$};
		\draw[fill=black](B) circle (2pt) node [right]{$B$};
		\draw[fill=black](C) circle (2pt) node[above]{$C$};
		\draw[fill=black](G) circle (2pt) node[above]{$G$};
		\draw[fill=black](H) circle (2pt) node[below]{$H$};
		\draw[fill=black](F1) circle (2pt) node[right]{$F_1$};
		\draw[fill=black](O) circle (2pt) node[above left]{$O$};
		\draw[fill=black](X) circle (2pt) node[below]{$X$};	
		\draw[fill=black](Y) circle (2pt) node[above]{$Y$};					
		\draw[fill=black](W)circle(2pt)node[below left]{$W$};			
		\draw[fill=black](Z)circle(2pt)node[above left]{$Z$};			
		
	\end{tikzpicture}
	\caption{Lemma \ref{lem:triangle-focus}.}\label{fig:acute-triangle}
\end{figure}
\begin{proof} 
A direct application of Lemma \ref{lem:OX}.	
\end{proof}

Applying Theorem \ref{th:simson} to triangles $ACG$ and $ACH$, we get the following

\begin{lemma}\label{lem:F}
Let $ABC$ be an arbitrary triangle such that its angles at $A$, $C$ are not right, and $G$, $H$ the footings of its altitudes from $A$, $C$.
Then the locus of all points $F$ in the plane such that its projections to $AC$, $AG$, $BC$ are collinear is the circle with radius $AC$. (See Figure \ref{fig:locusF}).
That circle is also the locus of all points whose projections to $AC$, $CH$, $AB$ are collinear. 
\end{lemma}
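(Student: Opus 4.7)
The plan is to reduce the lemma to two applications of Simson's theorem (Theorem \ref{th:simson}), once to the triangle $ACG$ and once to the triangle $ACH$. For the first locus, I would observe that since $G$ is the foot of the altitude from $A$, it lies on $BC$, so the line $CG$ is the line $BC$. Consequently, the three sides of $\triangle ACG$ lie on the lines $AC$, $AG$, and $BC$, which are exactly the three lines onto which $F$ is projected in the statement. Simson's theorem then asserts that those three projections are collinear if and only if $F$ lies on the circumcircle of $\triangle ACG$.

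Next, I would identify this circumcircle explicitly. Because $AG$ is an altitude we have $\angle AGC = 90^{\circ}$, so by the converse of Thales' theorem the circumcircle of $\triangle ACG$ is the circle on $AC$ as diameter (which is what the statement calls ``the circle with radius $AC$''). The hypothesis that the angles at $A$ and $C$ are not right guarantees $G \neq A, C$, so the triangle is genuinely non-degenerate and Simson's theorem applies without issue.

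The second claim is handled by exactly the same argument, with $H$ in place of $G$. Since $H$ lies on $AB$, the sides of $\triangle ACH$ lie along $AC$, $CH$, and $AB$; Simson's theorem identifies the corresponding locus with the circumcircle of $\triangle ACH$; and because $CH\perp AB$ gives $\angle AHC = 90^{\circ}$, this circumcircle is again the circle with diameter $AC$. The two loci therefore coincide, proving the lemma. No real obstacle is expected: the only subtlety is the bookkeeping identification of side $CG$ with the line $BC$ and of side $AH$ with the line $AB$, together with the observation that the right angles at the feet of the altitudes force both circumcircles to be one and the same Thales circle on $AC$.
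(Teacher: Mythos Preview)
Your proposal is correct and follows exactly the approach of the paper, whose proof is the single sentence ``Applying Theorem \ref{th:simson} to triangles $ACG$ and $ACH$.'' You have simply spelled out the details the paper leaves implicit: that the side $CG$ of $\triangle ACG$ lies on $BC$, that the right angle at $G$ forces the circumcircle to be the Thales circle on $AC$, and symmetrically for $\triangle ACH$.
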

\begin{figure}[h]
	\centering
	\begin{tikzpicture}[]
		\coordinate (A) at (0,0);
		\coordinate (B) at (3,4);
		\coordinate (C) at (5,0);
		\coordinate (G) at (4,2);
	\coordinate (H) at (1.8, 2.4);
		\coordinate (F) at (2.77778,2.48452);
		\coordinate (Fp1) at (2.77778,0);
		\coordinate (Fp2) at (3.56175, 2.8765);
		\coordinate(Fp3)at (3.21603, 1.60802);
		\coordinate(Fp4)at(2.19257, 2.92343);
		\coordinate(Fp5)at(2.38521, 1.96109);
		\coordinate (O) at (2.5, 0);
		\coordinate(X) at (3.41506, -0.915064);
		\coordinate(W) at (1.97169, -0.528312);
		\coordinate(Y) at (3.41506, 0.915064);
		\coordinate(Z) at (1.97169, 0.528312);

		
		\draw[thick](O)circle(2.5);
		
		\draw[very thick](A)--(B)--(C)--cycle;
		
		\draw[thin,gray](A)--(G);
		\draw[thin,gray](C)--(H);
		\draw[thin,gray](Fp2)--(F)--(Fp1);
		\draw[thin,gray](Fp4)--(F)--(Fp3);
		\draw[thin,gray](F)--(Fp5);
		\draw[very thick,dotted](Fp4)--(Fp1)--(Fp2);
		
		\draw[fill=black](A)circle(2pt)node[left]{$A$};
		\draw[fill=black](B)circle(2pt)node[above]{$B$};
		\draw[fill=black](C)circle(2pt)node[right]{$C$};
		\draw[fill=black](F)circle(2pt)node[above]{$F$};
		\draw[fill=black](H)circle(2pt)node[above left]{$H$};
		\draw[fill=black](G)circle(2pt)node[above right]{$G$};
		
		\draw[fill=black](Fp1)circle(2pt);
		\draw[fill=black](Fp2)circle(2pt);
		\draw[fill=black](Fp3)circle(2pt);
		\draw[fill=black](Fp4)circle(2pt);
		\draw[fill=black](Fp5)circle(2pt);

	\end{tikzpicture}
	\caption{Lemma \ref{lem:F}.}\label{fig:locusF}
\end{figure}

\begin{remark}
It is interesting to note that the locus from Lemma \ref{lem:F} does not depend on the vertex $B$.
\end{remark}

\begin{proposition}\label{prop:unique}
Consider an acute triangle $ABC$.
Let $G$ and $H$ be the footings of its altitudes from $A$ and $C$ respectively. 
Then there is a unique point $F_1$ inside the triangle such that its projections $O$, $X$, $Y$, $Z$, $W$ to $AC$, $AB$, $BC$, $AG$, $CH$ respectively,
satisfy the following:
\begin{itemize}
\item[(i)] $O$, $Z$, $Y$ are collinear;
\item[(ii)] $O$, $W$, $X$ are collinear;
\item[(iii)] $OX=OY$.
\end{itemize}
\end{proposition}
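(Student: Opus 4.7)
The plan is to first use Lemma \ref{lem:F} to reduce conditions (i) and (ii) to a single circle condition on $F_1$, then interpret condition (iii) as an Apollonius ratio condition, and finally intersect the two loci.

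First I would observe that, as $G$ is the foot of the altitude from $A$ to $BC$, one has $\angle AGC=90^\circ$, so $G$ lies on the circle $k$ of diameter $AC$; analogously $H\in k$. Since $G\in BC$, the line $BC$ coincides with the line $CG$, and so condition (i) is exactly the Simson condition for the triangle $ACG$. By Lemma \ref{lem:F} it is therefore equivalent to $F_1$ lying on the circumcircle of $\triangle ACG$, which is precisely $k$. The same argument applied to $\triangle ACH$ shows that (ii) is also equivalent to $F_1\in k$. Hence (i) and (ii) together reduce to the single condition $F_1\in k$.

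For $F_1\in k$, I would interpret (iii) as follows. The quadrilateral $AOF_1X$ has right angles at $O$ and $X$ (both being feet of perpendiculars from $F_1$), so it is cyclic with diameter $AF_1$; applying the extended law of sines to its chord $OX$ and noting $\angle OAX=\angle BAC$ gives $OX=AF_1\sin A$. Symmetrically, from the cyclic quadrilateral $COF_1Y$ with diameter $CF_1$, one obtains $OY=CF_1\sin C$. Condition (iii) thus reads $AF_1\sin A=CF_1\sin C$, which by the law of sines in $\triangle ABC$ becomes $AF_1/CF_1=c/a$, with $a=BC$ and $c=AB$. The locus of such points is an Apollonius circle $\ell$ for the pair $\{A,C\}$.

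Both $k$ and $\ell$ are symmetric about the line $AC$, and neither $A$ nor $C$ lies on $\ell$, so $k\cap \ell$ consists of a pair of points symmetric about $AC$. Non-emptiness follows from a direct computation: on $k$ one has $FA^2+FC^2=b^2$ (since $\angle AFC=90^\circ$), and combining this with $FA/FC=c/a$ yields $FA=bc/\sqrt{a^2+c^2}$ and $FC=ab/\sqrt{a^2+c^2}$, both positive. Precisely one of the two intersection points lies on the same side of $AC$ as $B$, and this is the candidate for $F_1$.

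The main obstacle I foresee is verifying that this candidate lies in the \emph{interior} of the triangle, not merely in the open half-plane on the $B$-side of $AC$. Since the triangle is acute, the arc of $k$ inside $\triangle ABC$ is the sub-arc between $H$ and $G$, so one must check that the specific point with ratio $c/a$ lies on this sub-arc. This can be carried out either by an explicit coordinate computation, or by comparing the ratios $AH/CH$ and $AG/CG$ with $c/a$ and appealing to the monotonicity of $AF/CF$ along the arc. Once this is settled, the existence and uniqueness of $F_1$ both follow.
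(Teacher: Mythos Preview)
Your argument is correct and follows the same opening move as the paper---using Lemma~\ref{lem:F} to collapse (i) and (ii) to the single condition $F_1\in k$, the circle on diameter $AC$---but then diverges. The paper stays in coordinates: it writes (iii) as an explicit quadratic in $(f_1,f_2)$, substitutes the circle equation $f_2^2=f_1-f_1^2$, and solves for $f_1$ in closed form. Your route is more synthetic: the cyclic-quadrilateral observation giving $OX=AF_1\sin A$ and $OY=CF_1\sin C$ turns (iii) into the Apollonius ratio $AF_1/CF_1=c/a$, and then the intersection with $k$ is handled via $AF_1^2+CF_1^2=b^2$. This is a genuinely different and arguably cleaner reduction; it avoids coordinates entirely and makes the geometric content of (iii) transparent. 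The paper's approach, on the other hand, yields explicit Cartesian formulas for $F_1$ directly, which feeds into the analytic program of Section~\ref{sec:Darboux-analytic}.

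One remark: you are more careful than the paper about the interior condition. The paper's proof simply records the formula for $f_1$ and stops, without checking $0<f_1<1$ or that the resulting point lies inside $\triangle ABC$. Your sketch for this step is sound; concretely, on the $B$-side arc of $k$ the ratio $AF/CF$ varies monotonically from $AH/CH=\cot A$ to $AG/CG=\tan C$, and acuteness of $B$ gives $\cot A\le \sin C/\sin A\le \tan C$, so the target ratio $c/a$ is attained strictly between $H$ and $G$. You might also note the degenerate case $a=c$, where the Apollonius locus is the perpendicular bisector of $AC$ rather than a circle; your explicit values $FA=bc/\sqrt{a^2+c^2}$, $FC=ab/\sqrt{a^2+c^2}$ already cover it without modification.
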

\begin{proof}
Without loss of generality, we may set a coordinate system in the plane such that $A=(0,0)$ and $C=(1,0)$. 
In that system, denote the coordinates of $B$ and $F$ by $(b_1,b_2)$ and $(f_1, f_2)$ respectively.
Applying Lemma \ref{lem:F}, we see that each of the conditions (i) and (ii) is equivalent to $F$ belonging to the circle with radius $AC$, which can be written in the coordinates as:
\begin{equation}\label{eq:CC1'}
	-f_1+f_1^2+f_2^2=0.
\end{equation}
A direct calculation for condition (iii) gives
\begin{equation}\label{eq:cond3}
b_2^2
\left(
(1-2b_1)(f_1^2+f_2^2)+ 2(b_1^2+b_2^2)f_1 -(b_1^2+b_2^2)
\right)=0.
\end{equation}
Observing that $b_2\ne 0$ and substituting $f_2^2=f_1-f_1^2$ from \eqref{eq:CC1'} into \eqref{eq:cond3}, we get
\begin{equation}\label{eq:cond3a}
f_1=\frac{b_1^2+b_2^2}{1-2b_1+2b_1^2+2b_2^2}.
\end{equation}
Thus, $F_1=(f_1, f_2)$ which satisfies the conditions (i), (ii), and (iii) is uniquely determined
through \eqref{eq:cond3a} and \eqref{eq:CC1'}. 
This completes the proof.
\end{proof}

This finishes the analytic proof of Theorem \ref{th:each-butterfly}.

\subsection*{Acknowledgements} 
The authors would like to thank Djordje Barali\'c and Milo\v s Djori\'c for useful discussions. 
The research  was supported
by the Australian Research Council, Discovery Project 190101838 \emph{Billiards within quadrics and beyond}, the Science Fund of Serbia grant \emph{Integrability and Extremal Problems in Mechanics, Geometry and
Combinatorics}, MEGIC, Grant No. 7744592  and the Simons Foundation grant no. 854861.
We thank the anonymous referees for careful reading and useful comments and suggestions. M.~R.~is grateful to Max Planck Institute for Mathematics in Bonn for its hospitality and financial support.

\begin{bibdiv}
\begin{biblist}
\bib{ADSK2016}{article}{
   author={Avila, Artur},
   author={De Simoi, Jacopo},
   author={Kaloshin, Vadim},
   title={An integrable deformation of an ellipse of small eccentricity is
   an ellipse},
   journal={Ann. of Math. (2)},
   volume={184},
   date={2016},
   number={2},
   pages={527--558},
   issn={0003-486X},
   review={\MR{3548532}},
   doi={10.4007/annals.2016.184.2.5},
}
{
\bib{Bar}{article}{
	author={Barali\'c, Dj.},
	title={A short proof of the Bradley theorem},
	journal={American Mathematical Monthly},
	volume={122},
	pages={381--385},
	date={2015}
}
}
\bib{Berg}{article}{
	author={van der Berg, F. J.},
	title={Nogmaals over afgeleide Wortelpunten},
	journal={Nieuw Archiev voor Wiskunde},
	volume={15},
	pages={100--164},
	date={1888}
}

\bib{BergerGeomII}{book}{
	author={Berger, Marcel},
	title={Geometry. II},
	series={Universitext},
	publisher={Springer-Verlag, Berlin},
	date={1987},
	pages={x+406},
}

\bib{BiMi2017}{article}{
   author={Bialy, Misha},
   author={Mironov, Andrey E.},
   title={Angular billiard and algebraic Birkhoff conjecture},
   journal={Adv. Math.},
   volume={313},
   date={2017},
   pages={102--126},
   issn={0001-8708},
   review={\MR{3649222}},
   doi={10.1016/j.aim.2017.04.001},
}

\bib{BiMi2022}{article}{
   author={Bialy, Misha},
   author={Mironov, Andrey E.},
   title={The Birkhoff-Poritsky conjecture for centrally-symmetric billiard
   tables},
   journal={Ann. of Math. (2)},
   volume={196},
   date={2022},
   number={1},
   pages={389--413},
   issn={0003-486X},
   review={\MR{4429262}},
   doi={10.4007/annals.2022.196.1.2},
}

\bib{Bolotin1990}{article}{
    author={Bolotin, S. V.},
    title={Integrable Birkhoff billiards},
    journal={Vestnik Moskov. Univ. Ser. I Mat. Mekh.},
    date={1990},
    number={2},
    pages={33-36}
}

\bib{BLPT}{article}{
	author={Bor, Gil},
	author={Levi, Mark},
 author={Perline, Ron},
	author={Tabachnikov, Sergei},
	title={Tire Tracks and Integrable Curve Evolution },
	journal={Int. Math. Res. Not. IMRN},
	volume={2020},
	date={2020},
	number={9},
	pages={2698--2768},
}

\bib{CZ2021}{article}{
   author={Corvaja, P.},
   author={Zannier, U.},
   title={Finiteness theorems on elliptical billiards and a variant of the Dynamical Mordell-Lang Conjecture},
   journal={Proceedings of the London Mathematical Society},
   volume={127},
   date={2023},
   number={5},
   pages={1268-1337}
}

\bib{Drag2011}{article}{
	author={Dragovi\'{c}, Vladimir},
	title={Poncelet-Darboux curves, their complete decomposition and Marden
		theorem},
	journal={Int. Math. Res. Not. IMRN},
	date={2011},
	number={15},
	pages={3502--3523},
}

\bib{DGR2022}{article}{
	author={Dragovi\'c, Vladimir},
	author={Gasiorek, Sean},
	author={Radnovi\'c, Milena},
	title={Billiard ordered games and books},
	journal={Regul. Chaotic Dyn.},
	volume={27},
	date={2022},
	number={2},
	pages={132--150},
}

\bib{DragRadn2011book}{book}{
	author={Dragovi\'c, Vladimir},
	author={Radnovi\'c, Milena},
	title={Poncelet Porisms and Beyond},
	publisher={Springer Birkhauser},
	date={2011},
	place={Basel}
}

\bib{DragRadn2014jmd}{article}{
    author={Dragovi\'c, Vladimir},
    author={Radnovi\'c, Milena},
    title={Pseudo-integrable billiards and arithmetic dynamics},
    date={2014},
    journal={Journal of Modern Dynamics},
    volume={8},
    number={1},
    pages={109--132}
}

\bib{DR2019}{article}{
	author={Dragovi\'c, Vladimir},
	author={Radnovi\'c, Milena},
	title={Caustics of Poncelet polygons and classical extremal polynomials},
	journal={Regul. Chaotic Dyn.},
	volume={24},
	date={2019},
	number={1},
	pages={1--35},
}

\bib{DR2019cmp}{article}{
	author={Dragovi\'c, Vladimir},
	author={Radnovi\'c, Milena},
	title={Periodic ellipsoidal billiard trajectories and extremal
		polynomials},
	journal={Comm. Math. Phys.},
	volume={372},
	date={2019},
	number={1},
	pages={183--211},
}

\bib{DR2023}{article}{
	author={Dragovi\'c, Vladimir},
	author={Radnovi\'c, Milena},
	title={Billiards within ellipsoids in the 4-dimensional pseudo-Euclidean
		spaces},
	journal={Regul. Chaotic Dyn.},
	volume={28},
	date={2023},
	number={1},
	pages={14--43},
}

\bib{DR2023adv}{article}{
	author={Dragovi\'c, Vladimir},
	author={Radnovi\'c, Milena},
	title={Resonance of ellipsoidal billiard trajectories and extremal
		rational functions},
	journal={Adv. Math.},
	volume={424},
	date={2023},
	pages={Paper No. 109044, 51},
}

\bib{FoVed}{article}{
	author={Fomenko, Anatoly},
	author={Vedyushkina, Viktoria},
	title={Billiards and Intregrable Systems},
	journal={Russian Mathematical Surveys},
	volume={78},
	date={2023},
	number={5},
	pages={881--954},
}

\bib{GKR2021}{article}{
   author={Garcia, Ronaldo},
   author={Reznik, Dan},
   author={Koiller, Jair},
   title={New properties of triangular orbits in elliptic billiards},
   journal={Amer. Math. Monthly},
   volume={128},
   date={2021},
   number={10},
   pages={898--910},
   issn={0002-9890},
   review={\MR{4343349}},
   doi={10.1080/00029890.2021.1982360},
}

\bib{GKR2022}{article}{
   author={Garcia, Ronaldo},
   author={Koiller, Jair},
   author={Reznik, Dan},
   title={Loci of 3-periodics in an elliptic billiard: Why so many
   ellipses?},
   journal={J. Symbolic Comput.},
   volume={114},
   date={2023},
   pages={336--358},
   issn={0747-7171},
   review={\MR{4446194}},
   doi={10.1016/j.jsc.2022.06.001},
}

\bib{GSO}{book}{
	author={Glaeser, Georg},
	author={Stachel, Hellmuth},
	author={Odehnal, Boris},
	title={The universe of quadrics},
	publisher={Springer, Berlin},
	date={2020},
}

\bib{Glu2021}{article}{
   author={Glutsyuk, Alexey},
   title={On polynomially integrable Birkhoff billiards on surfaces of
   constant curvature},
   journal={J. Eur. Math. Soc. (JEMS)},
   volume={23},
   date={2021},
   number={3},
   pages={995--1049},
   issn={1435-9855},
   review={\MR{4210728}},
   doi={10.4171/jems/1027},
}

\bib{Kal2008}{article}{
	author={Kalman, D.},
	title={An Elementary Proof of Marden's Theorem},
	journal={The American Mathematical Monthly},
	volume={115},
	date={2008},
	pages={330--337}
}

\bib{KS2018}{article}{
   author={Kaloshin, Vadim},
   author={Sorrentino, Alfonso},
   title={On the local Birkhoff conjecture for convex billiards},
   journal={Ann. of Math. (2)},
   volume={188},
   date={2018},
   number={1},
   pages={315--380},
   issn={0003-486X},
   review={\MR{3815464}},
   doi={10.4007/annals.2018.188.1.6},
}

\bib{KozTrBIL}{book}{
	author={Kozlov, Valery},
	author={Treshch\"ev, Dmitry},
	title={Billiards},
	publisher={Amer. Math. Soc.},
	address={Providence RI},
	date={1991}
}

\bib{Kozlov2003}{article}{
    author={Kozlov, V. V.},
    title={Rationality conditions for the ratio of elliptic
            integrals and the great Poncelet theorem},
    language={Russian},
    journal={Vestnik Moskov. Univ. Ser. I Mat. Mekh.},
    volume={71},
    date={2003},
    pages={6--13},
    number={4},
    translation={
        language={English},
        journal={Moscow Univ. Math. Bull.},
        volume={58},
        number={4},
        date={2004},
        pages={1--7}
    }
}

\bib{MardenPOLY}{book}{
	author={Marden, Morris},
	title={Geometry of Polynomials},
	publisher={AMS},
	series={Math. Surveys},
	volume={2},
	edition={2},
	date={1966}
}

\bib{Pra}{book}{
	author={Prasolov, Victor V.},
	title={Polynomials},
	series={Algorithms and Computation in Mathematics},
	volume={11},
	note={Translated from the 2001 Russian second edition by Dimitry Leites},
	publisher={Springer-Verlag, Berlin},
	date={2004},
}

\bib{Scho}{book}{
	author={Schoenberg, Isaac J.},
	title={Mathematical time exposures},
	publisher={Mathematical Association of America, Washington, DC},
	date={1982},
}

\bib{Sch1}{article}{
	author={Schwartz, Richard Evan},
	title={Obtuse triangular billiards. I. Near the $(2,3,6)$ triangle},
	journal={Experiment. Math.},
	volume={15},
	date={2006},
	number={2},
	pages={161--182},
}

\bib{Sch2}{article}{
	author={Schwartz, Richard Evan},
	title={Obtuse triangular billiards. II. One hundred degrees worth of
		periodic trajectories},
	journal={Experiment. Math.},
	volume={18},
	date={2009},
	number={2},
	pages={137--171},
}

\bib{Si1864}{article}{
	author={Siebeck, J.},
	title={Ueber eine neue analytische Behandlungweise der Brennpunkte},
	journal={J. Reine Angew. Math.},
	volume={64},
	date={1864},
	pages={175--182}
}

\bib{St}{article}{
	author={Stachel, Hellmuth},
	title={Isometric Billiards in Ellipses and Focal
Billiards in Ellipsoids
},
	journal={Journal for Geometry and Graphics},
	date={2021},
 volume={25},
	number={1},
	pages={97--118},
}

\bib{Tab2005book}{book}{
	author={Tabachnikov, Serge},
	title={Geometry and billiards},
	series={Student Mathematical Library},
	volume={30},
	publisher={American Mathematical Society},
	place={Providence, RI},
	date={2005}
}

\bib{Tab2022}{article}{
	author={Tabachnikov, Serge},
	title={Remarks on rigidity properties of conics},
	journal={Regul. Chaotic Dyn.},
	volume={27},
	date={2022},
	number={1},
	pages={18--23},
}
\end{biblist}
\end{bibdiv}


\end{document}